\newtheorem{theorem}{Theorem}[section]
\newtheorem{remark}{Remark}[section]
\newtheorem{example}[theorem]{Example}
\newtheorem{proposition}{Proposition}
\newtheorem{corollary}{Corollary}
\newtheorem{lemma}{Lemma}
\newtheorem{definition}{Definition}
\newcommand \dd[1] {\mathrm d{#1} }
\begin{document}

\title{On finding optimal collective variables for complex systems by minimizing the deviation between effective and full dynamics}
\date{}

\author[1]{Wei Zhang\,\thanks{Email: wei.zhang@fu-berlin.de}}
\author[1,2]{Christof Sch\"utte\,\thanks{Email: christof.schuette@fu-berlin.de}}

\affil[1]{Zuse Institute Berlin, Takustra{\ss}e 7, 14195 Berlin, Germany}

\affil[2]{Department of Mathematics and Computer Science, Freie Universit\"at Berlin, Arnimallee 6, 14195 Berlin, Germany}

\maketitle

% REQUIRED
\begin{abstract}
    This paper is concerned with collective variables, or reaction
    coordinates, that map a discrete-in-time Markov process $X_n$ in
    $\mathbb{R}^d$ to a (much) smaller dimension $k\ll d$. We define the
    effective dynamics under a given collective variable map $\xi$ as the best Markovian representation of $X_n$ under $\xi$. The novelty of the paper is that it gives strict criteria for selecting optimal collective variables via the properties of the effective dynamics. 
    In particular, we show that the transition density of the effective
    dynamics of the optimal collective variable solves a relative entropy minimization problem from certain family of densities to the transition density of $X_n$. 
    We also show that many transfer operator-based data-driven numerical
    approaches essentially learn quantities of the effective dynamics.
    Furthermore, we obtain various error estimates for the effective dynamics
    in approximating dominant timescales\,/\,eigenvalues and transition rates of the original process $X_n$ and how optimal collective variables minimize these errors. Our results contribute to the development of theoretical tools for the understanding of complex dynamical systems, e.g.\ molecular kinetics, on large timescales.  
    These results shed light on the relations among existing data-driven
    numerical approaches for identifying good collective variables, and they also motivate the development of new methods.
\end{abstract}

% REQUIRED
\begin{keywords}
   molecular dynamics, transfer operator, collective variable, reaction coordinate, effective dynamics
\end{keywords}

\section{Introduction}
\label{sec-intro}

 In many realistic applications, e.g.\ molecular dynamics (MD), materials
 science, and climate simulation, the systems of interest are often very high-dimensional
 and exhibit extraordinarily complex phenomena on vastly different time scales. 
For instance, it is common that molecular systems, e.g.\ proteins, undergo noise-induced transitions among several
long-lived (metastable) conformations, and these essential transitions occur on timescales that are much larger compared to the timescale of noisy fluctuations experienced by the systems. 
 Understanding the dynamics of such systems is  important for
 scientific discoveries, e.g.\ in drug design. However, the characteristics of these systems, e.g.\ their
 high dimensionality and the existence of multiple time/spatial scales, bring
 considerable challenges, as many traditional experimental/computational
 approaches become either inefficient or completely inapplicable. A great amount of research efforts have thus been devoted to developing theoretical tools as well as advanced numerical approaches for tackling these challenges. 

A commonly adopted strategy is to utilize the fact that the dynamics of
high-dimensional systems, e.g.\ molecular systems in MD and materials science,
can often be well characterized using only a few observables, often called
collective variables (CVs) or reaction coordinates, of the systems. In
particular, many enhanced sampling methods
(see~\cite{enhanced-sampling-for-md-review} for a review) utilize the
knowledge about the system's CVs to perform accelerated sampling and efficient
free energy calculations~\cite{lelievre2010free}, and various methods for
model reduction rely on this knowledge to construct simpler
(lower-dimensional) surrogate
models~\cite{perspective-noid-cg,effective_dynamics,non-markovian-modeling-pfolding-netz}.
While these methods have proven to be greatly helpful in understanding
dynamical behaviors of complex systems, the accuracy of estimated quantities
and surrogate models offered by these methods crucially depend on the choices
of CVs. Given a set of CVs, analyzing the accuracy of surrogate models in
approximating the true dynamics is an important research topic that has
attracted considerable attentions in the past years;
see~\cite{effective_dynamics,LEGOLL2017-pathwise,effective_dyn_2017,LegollLelievreSharma18,DLPSS18,LelievreZhang18}
for studies on effective dynamics constructed using conditional expectation.
These work extend the classical averaging technique for multiscale processes to
the setting where the slow variable is nonlinear
(see~\cite[Chapter~10]{pavliotis2008multiscale} for averaging techniques and
\cite{LelievreZhang18,LegollLelievreSharma18} for discussions on the connection
between averaging and effective dynamics by conditional expectation). 
Furthermore, efficient algorithms which allow for automatic identification of ``good" CVs are currently under rapid development; see for instance~\cite{zhang2023understanding,optimal-rc-bittracher,hao-rc-flow} for recent progresses in finding CVs using machine learning and deep learning techniques. 
 
As generating data for large and complex systems becomes much easier thanks to the significant advances in the developments of both computer hardware and efficient numerical algorithms in the past decades, many data-driven numerical methods have been proposed, which can be applied to understanding dynamical behaviors of complex systems by analyzing their trajectory data. A large class of these data-driven methods are based on the theory of transfer operators~\cite{transfer_operator,Schutte_Klus_Hartmann_2023} or Koopman operators~\cite{Koopmanism}, in which the dynamics of a underlying system is analyzed using the eigenvalues and the corresponding eigenfunctions of the transfer (or Koopman) operator associated to the system.
Notable examples of methods in this class include standard Markov state models (MSMs)~\cite{msm_generation,husic-pande-msm}, the variational approach to conformational dynamics~\cite{frank_feliks_mms2013,feliks_variational_jctc_2014}, time lagged independent component analysis (tICA)~\cite{tica}, variational approach for Markov processes (VAMP)~\cite{Wu2020}, extended dynamic mode decompositions~\cite{edmd,klus-koopman,EDMD-Klus}, and many others. Recent development in this direction includes kernel-tICA~\cite{kernel-tica}, as well as the deep learning frameworks VAMPnets~\cite{vampnet}, and state-free reversible VAMPnets (SRVs)~\cite{state-free-vampnets} for molecular kinetics.

 This paper is concerned with collective variables of discrete-in-time Markov
 processes $X_n$ in $\mathbb{R}^d$ in high dimension $d$, where $X_n$ may also
 be understood as resulting from a continuous-time Markov process by subsampling with constant lag-time.  Collective variables are described by CV maps $\xi:\mathbb{R}^d\rightarrow \mathbb{R}^k$, where $1 \le
k < d$, with an interest in $k\ll d$.  Following the idea
in~\cite{effective_dynamics}, we define the effective dynamics of $X_n$ under a given
CV map $\xi$ using conditional expectations. The main goal is to analyze how
good the effective dynamics is in approximating the original process $X_n$ on
large timescales, and also use this analysis to propose numerical approaches for
optimizing CV maps. For this purpose, we first lay out the theoretical basis
(in Section~\ref{sec-tran-operator}) by collecting results on how the largest (relaxation) timescales of $X_n$ are
connected to the dominant eigenvalues of the transfer operator, as well as how
transition rates between metastable sets can be computed using the committor
function via transition path theory
(TPT)~\cite{tpt_eric2006,towards_tpt2006,tpt2010}. These well-known results are reformulated in a variational setting that is instrumental for the subsequent analysis. 

\paragraph{Main results} The novelty of the paper is that it gives strict criteria for selecting \emph{optimal} collective variables via the properties of the effective dynamics. Concretely, we give three optimality statements: First, we show that the effective dynamics introduced in this paper (Definition~\ref{def-effdyn}) is the optimal surrogate model in the sense that the Kullback-Leibler (KL) ``distance" to the transition density of $X_n$ is minimized (Proposition~\ref{prop-eff-kl}). This optimality allows us to define the optimal CV map as the solution to a relative entropy minimization problem (Theorem~\ref{thm:eff-kl}). Next, we obtain quantitative error estimates for the effective dynamics in approximating the eigenvalues and transition rates of the original process. Based on these
results, optimal CVs for minimizing these approximation errors are characterized, see Theorems~\ref{thm:opt_timescales} and \ref{thm:rates_CV}. 

\paragraph{Algorithmic implications} We show that our theoretical results also impact transfer operator-based data-driven numerical approaches.
For instance, in order to reduce computational complexity or pertain certain properties, e.g.\ rotation and translation invariance in molecular dynamics,
many data-driven numerical algorithms such as the deep learning approach VAMPnets for learning eigenfunctions~\cite{vampnet,state-free-vampnets}, often employ features, e.g., internal variables, of the underlying systems (instead of working directly with coordinate data) for function representation.
We show that in the case where such system's features are employed, the numerical methods implicitly learn the quantities (e.g.\ timescales, transition rates) of the effective dynamics, see Remark~\ref{vamp-r-eff} for formal discussions.
Moreover, the result on the relative entropy minimization property of the effective dynamics of the optimal CV uncovers a close connection to the transition manifold framework~\cite{Bittracher2018}, especially to its variational formulation \cite{optimal-rc-bittracher}, as well as to the recent deep learning approach for identifying CVs using normalizing flows~\cite{hao-rc-flow} (see
Theorem~\ref{thm:eff-kl} and Remark~\ref{rmk-prop-eff-kl}). 

The remainder of this article is organized as follows. In
Section~\ref{sec-tran-operator}, we study the transfer operator approach for analyzing metastable Markov processes. We present variational characterizations of the system's main timescales via spectral analysis of the associated transfer operator, and of transition rates using TPT. In
Section~\ref{sec-effdyn}, we define the effective dynamics of discrete-in-time Markov processes for a given CV map and study its properties, in particular the relative entropy minimization property of the optimal CV. 
In Section~\ref{sec-error-estimates}, we analyze approximation errors of effective dynamics in terms of estimating eigenvalues and transition rates of the original process, and discuss algorithmic identification of optimal CVs.
In Appendix~\ref{app-sec-langevin}, we provide details for the concrete case where the transfer
operator models the evolution of position variables of Langevin dynamics. In
Appendix~\ref{app-sec-proofs}, we present the proofs of some results in Section~\ref{sec-tran-operator}.

\section{Transfer operator approach}
\label{sec-tran-operator}

In this section, we present the transfer operator approach for Markov processes. 
We introduce necessary quantities in Section~\ref{subsec-transfer-definition},
then we study the spectrum of the transfer operator in Section~\ref{subsec-spectrum}, and finally we present an extension of TPT in Section~\ref{subsec-tpt}.  This section provides theoretical basis for the study of effective dynamics in Sections~\ref{sec-effdyn}--\ref{sec-error-estimates}.

\subsection{Definitions and basic properties}
\label{subsec-transfer-definition}

Consider a stationary, discrete-in-time Markov process $X_n$ in $\mathbb{R}^d$
with a positive transition density function $p(x,y)$ for $x,y \in \mathbb{R}^d$, i.e.\
$\mathbb{P}(X_{n+1}\in D| X_n=x) = \int_{D} p(x,y)\,\dd{y}$ for any measurable subset $D
\subseteq \mathbb{R}^d$ and any integer $n\ge 0$.  We assume that $X_n$ is
ergodic with respect to a unique invariant distribution $\mu(\dd{x}) =
\pi(x)\dd{x}$, where the probability density $\pi$ is positive
(see~\cite[Chapter 1]{Krengel+1985} and \cite[Section 2]{Eberle-note} for ergodicity of Markov processes). 
The invariance of~$\mu$ means
\begin{equation}
  \left(\int_{\mathbb{R}^d} p(x,y) \mu(\dd{x})\right) \dd{y}= \mu(\dd{y}), ~\mbox{or equivalently,}~ \int_{\mathbb{R}^d} p(x,y) \mu(\dd{x}) = \pi(y), \quad  y \in \mathbb{R}^d\,.
  \label{eqn-p-pi}
\end{equation}

A key object in analyzing the dynamical properties of the process $X_n$ is the transfer operator $\mathcal{T}$, which is defined by its action on functions $f: \mathbb{R}^d\rightarrow \mathbb{R}$, i.e.
\begin{equation}
  (\mathcal{T}f)(x) = \mathbb{E}(f(X_1)|X_0=x) = \int_{\mathbb{R}^d} p(x,y) f(y)\,\dd{y} \,, \quad x \in \mathbb{R}^d\,,
  \label{trans-operator}
\end{equation}
where $\mathbb{E}(\cdot|X_0=x)$ denotes the expectation conditioned on the
initial state $X_0=x$ (see Remark~\ref{rmk-operators} below for discussions on different operators studied in the literature as well as their connections).
It is natural to study $\mathcal{T}$ in the Hilbert space
\begin{equation*}
\mathcal{H}=L^2_\mu(\mathbb{R}^d)=\left\{f\,\middle|\, f:\mathbb{R}^d\rightarrow
\mathbb{R},\, \int_{\mathbb{R}^d} f^2(x) \mu(\dd{x}) < +\infty\right\} 
\end{equation*}
  endowed
with the weighted inner product (and the norm denoted by $\|\cdot\|_\mu$)
\begin{equation}
  \langle f, h\rangle_\mu= \int_{\mathbb{R}^d} f(x)h(x) \mu(\dd{x}) \,, \quad \forall f,h \in \mathcal{H}\,.
  \label{weighted-inner-product}
\end{equation}

With the above preparation we study the reversibility of $X_n$.
Denote by $\mathcal{T}^*$ the adjoint of $\mathcal{T}$ in $\mathcal{H}$, such
that $\langle \mathcal{T}f, h\rangle_\mu= \langle f, \mathcal{T}^*h\rangle_\mu$, for all $f,h \in \mathcal{H}$. It is straightforward to verify that
\begin{equation}
  (\mathcal{T}^*f)(x) = \int_{\mathbb{R}^d} p^{*}(x,y) f(y)\, \dd{y} \,, \quad x \in \mathbb{R}^d 
  \label{tran-adjoint}
\end{equation}
for test functions $f: \mathbb{R}^d\rightarrow \mathbb{R}$, where $p^*$ is defined by 
\begin{equation}
  p^*(x,y) = \frac{p(y,x)\pi(y)}{\pi(x)}\,, \quad x,y \in \mathbb{R}^d\,.
  \label{p-adjoint}
\end{equation}
Moreover, from \eqref{eqn-p-pi} and \eqref{p-adjoint} we can easily verify that
$\int_{\mathbb{R}^d} p^*(x,y)\,\dd{y} = 1$ for all $x\in \mathbb{R}^d$ and
$\int_{\mathbb{R}^d} p^*(x,y)\,\mu(\dd{x})=\pi(y)$ for all $y\in
\mathbb{R}^d$. Therefore, $p^*$ is a transition density function and it defines another Markov process (i.e.\ the adjoint of $X_n$) which has the same invariant distribution $\mu$.

Note that $\mathcal{T}$ can be decomposed into the reversible and non-reversible parts, i.e.
\begin{equation}
  \mathcal{T} = \mathcal{T}^{rev} + \mathcal{T}^{non}
  \label{tran-decomp-1}
\end{equation}
where 
\begin{equation}
     \mathcal{T}^{rev} = \frac{1}{2} (\mathcal{T} + \mathcal{T}^*), \quad \mathcal{T}^{non} = \frac{1}{2} (\mathcal{T} - \mathcal{T}^*),
  \label{tran-decomp-2}
\end{equation}
or, more explicitly,
\begin{equation}
  \begin{aligned}
  (\mathcal{T}^{rev}f)(x) =& \int_{\mathbb{R}^d}
    \frac{1}{2}\big(p(x,y)+p^*(x,y)\big) f(y)\, \dd{y}\,, \\
    (\mathcal{T}^{non}f)(x) =& \int_{\mathbb{R}^d} \frac{1}{2}\big(p(x,y)-p^*(x,y)\big) f(y)\,\dd{y}
  \end{aligned}
  \label{tran-decomp-3}
\end{equation}
for test functions $f: \mathbb{R}^d\rightarrow \mathbb{R}$. From
\eqref{tran-adjoint}, \eqref{p-adjoint}, and the simple decomposition in
\eqref{tran-decomp-1}--\eqref{tran-decomp-3}, we obtain the following equivalent conditions.
\begin{align}
  \begin{split}
    & ~ p^*\equiv p \hspace{2cm} \mbox{(detailed balance)} \\
    \Longleftrightarrow & ~ \mathcal{T}=\mathcal{T}^*= \mathcal{T}^{rev} \hspace{0.6cm} \mbox{(self-adjointness)}\\
    \Longleftrightarrow & ~ \mathcal{T}^{non}=0\,.
  \end{split}
  \label{reversibility}
\end{align}
We will refer to \eqref{reversibility} as reversibility conditions and say that $X_n$ is reversible if any of the equivalent conditions in \eqref{reversibility} is satisfied. 

Let us define the Dirichlet form~\cite[Chapter~1]{FukushimaOshimaTakeda2010} 
\begin{equation}
  \mathcal{E}(f,h) := \frac{1}{2} \int_{\mathbb{R}^d}\left(\int_{\mathbb{R}^d}
   (f(y) - f(x)) (h(y)- h(x)) p(x,y)\dd{y}\,\right) \mu(\dd{x})\,,
  \label{dirichlet-form}
\end{equation}
for two test functions $f,h: \mathbb{R}^d\rightarrow \mathbb{R}$, and the Dirichlet energy 
\begin{equation}
\mathcal{E}(f) := \mathcal{E}(f,f) = \frac{1}{2} \int_{\mathbb{R}^d}\left(\int_{\mathbb{R}^d}
   (f(y) - f(x))^2 p(x,y)\dd{y}\,\right) \mu(\dd{x})\,.
  \label{energy}
\end{equation}
The following lemma summarizes basic properties of these quantities. 
\begin{lemma}
  For any $f,h \in \mathcal{H}$, we have 
\begin{equation}
  \mathcal{E}(f,h) = \langle (\mathcal{I} - \mathcal{T}^{rev})f, h\rangle_{\mu}\,,
  \label{e-f-g-adjoint}
\end{equation}
  where $\mathcal{I}$ and $\mathcal{T}^{rev}$ denote the identity operator and the reversible part of $\mathcal{T}$ in \eqref{tran-decomp-2}, respectively. Let $X_0,
  X_1,X_2, \dots$ be an infinitely long trajectory of the process $X_n$. Then, we have almost surely
\begin{equation}
  \mathcal{E}(f) = \lim_{N\rightarrow +\infty} \frac{1}{2N} \sum_{n=0}^{N-1} |f(X_{n+1}) - f(X_n)|^2\,. 
  \label{msv-is-energy}
\end{equation}
  \label{lemma-energy-and-msv}
\end{lemma}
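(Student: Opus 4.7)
The plan is to prove the two identities separately, with the first being a direct algebraic manipulation and the second a straightforward consequence of the ergodic theorem applied to a lifted process.

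\textbf{Proof of \eqref{e-f-g-adjoint}.} I would start by expanding the product $(f(y)-f(x))(g(y)-g(x))$ in the definition \eqref{dirichlet-form} into four terms: $f(y)g(y) - f(y)g(x) - f(x)g(y) + f(x)g(x)$. Each of these four double integrals can then be evaluated using either the invariance identity \eqref{eqn-p-pi} (for terms where $\pi(x)$ pairs with $p(x,y)$ but no function of $x$ remains, producing $\pi(y)$) or the definition \eqref{trans-operator} of $\mathcal{T}$ (and the adjoint relation). Concretely, the $f(y)g(y)$ and $f(x)g(x)$ terms each reduce to $\langle f,g\rangle_\mu$ after applying \eqref{eqn-p-pi} or $\int p(x,y)\,\mathrm dy=1$. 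The cross terms $\int\int p(x,y)f(y)g(x)\pi(x)\,\mathrm dx\,\mathrm dy$ and $\int\int p(x,y)f(x)g(y)\pi(x)\,\mathrm dx\,\mathrm dy$ become $\langle \mathcal{T}f,g\rangle_\mu$ and $\langle f,\mathcal{T}g\rangle_\mu=\langle \mathcal{T}^*f,g\rangle_\mu$, respectively. Collecting with the $\tfrac12$ prefactor yields
\begin{equation*}
  \mathcal{E}(f,g) = \langle f,g\rangle_\mu - \tfrac12\langle(\mathcal{T}+\mathcal{T}^*)f,g\rangle_\mu = \langle (\mathcal{I}-\mathcal{T}^{rev})f,g\rangle_\mu,
\end{equation*}
using the definition of $\mathcal{T}^{rev}$ from \eqref{tran-decomp-2}.

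\textbf{Proof of \eqref{msv-is-energy}.} I would lift $X_n$ to the pair process $Y_n := (X_n, X_{n+1})$, which is itself a Markov chain on $\mathbb{R}^d\times\mathbb{R}^d$ and is ergodic with respect to the invariant measure $\nu(\mathrm dx,\mathrm dy) = \pi(x)p(x,y)\,\mathrm dx\,\mathrm dy$ (this inherits from the ergodicity of $X_n$ and the positivity of $p$; the measure is invariant by \eqref{eqn-p-pi}). Setting $F(x,y):=|f(y)-f(x)|^2$, which lies in $L^1(\nu)$ whenever $f\in\mathcal{H}$ since
\begin{equation*}
  \int\!\!\int |f(y)-f(x)|^2\, p(x,y)\pi(x)\,\mathrm dx\,\mathrm dy \le 2\|f\|_\mu^2 + 2\langle\mathcal{T}(f^2),1\rangle_\mu = 4\|f\|_\mu^2 < \infty,
\end{equation*}
Birkhoff's ergodic theorem gives, almost surely,
\begin{equation*}
  \lim_{N\to\infty} \frac{1}{N}\sum_{n=0}^{N-1} F(Y_n) = \int\!\!\int F(x,y)\,\nu(\mathrm dx,\mathrm dy) = 2\mathcal{E}(f).
\end{equation*}
Dividing by $2$ yields \eqref{msv-is-energy}.

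The first identity is purely computational and I do not expect any obstacle beyond careful bookkeeping of the four terms. The only delicate point in the second identity is justifying that the pair process $(X_n,X_{n+1})$ inherits ergodicity from $X_n$ with the stated invariant measure; once this is in hand, Birkhoff's theorem closes the argument immediately. I would therefore phrase the second part so that the ergodicity of the lifted chain is cited as a standard consequence of the ergodicity assumption on $X_n$ rather than re-proved.
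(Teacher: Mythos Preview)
Your proposal is correct and follows essentially the same approach as the paper. For \eqref{e-f-g-adjoint} the paper performs the identical four-term expansion and collects the cross terms as $\tfrac12(\langle\mathcal{T}f,g\rangle_\mu+\langle f,\mathcal{T}g\rangle_\mu)$; for \eqref{msv-is-energy} the paper simply says the identity ``follows directly from \eqref{energy} and the ergodicity of the process,'' and your pair-process argument with Birkhoff's theorem is precisely the standard way to unpack that one-liner.
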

\begin{proof}
  Concerning \eqref{e-f-g-adjoint}, from \eqref{dirichlet-form} we can derive 
  \begin{align*}
    \mathcal{E}(f,h) =& \frac{1}{2} \int_{\mathbb{R}^d}\left(\int_{\mathbb{R}^d}
     \Big(f(y)h(y) + f(x)h(x) - f(y)h(x) - f(x)h(y)\Big) \,p(x,y)\dd{y}\right) \mu(\dd{x}) \\
    =& \int_{\mathbb{R}^d} f(x)h(x) \,\mu(\dd{x}) - \frac{1}{2}
    \int_{\mathbb{R}^d}\left(\int_{\mathbb{R}^d}  \Big(f(y)h(x) + f(x)h(y)\Big) p(x,y)\dd{y}\right) \mu(\dd{x}) \\
    =& \int_{\mathbb{R}^d} f(x)h(x) \,\mu(\dd{x}) - \frac{1}{2} \Big(\langle \mathcal{T}f,h\rangle_\mu +\langle f, \mathcal{T}h\rangle_\mu\Big) \\
    =& \langle (\mathcal{I} - \mathcal{T}^{rev})f,h\rangle_\mu 
  \end{align*}
  where we used \eqref{eqn-p-pi},\eqref{trans-operator} and \eqref{tran-decomp-2} to obtain the second, the third and the last equality, respectively.
Since we assume that the process is ergodic, the identity \eqref{msv-is-energy}
  follows directly from \eqref{energy} and Birkhoff's ergodic theorem~(see~\cite[Theorem~4.4 in Chapter 1]{Krengel+1985}). 
\end{proof}

In the following remark we discuss the relations among different operators that were studied in the literature.
\begin{remark}[Alternative operators and terminologies]
   Different operators have been used under different names in the literature for the study of stochastic dynamical systems.  The transfer
   operator $\mathcal{T}$ in \eqref{trans-operator} is also called backward transfer operator \cite{transfer_operator} or Koopman
   operator~\cite{Wu2020,Schutte_Klus_Hartmann_2023,KLUS2020132416}, whereas
   its adjoint $\mathcal{T}^*$ in \eqref{tran-adjoint} is called forward
   transfer operator in \cite{transfer_operator}. In the reversible case,
   we have $\mathcal{T}=\mathcal{T}^*$ and hence all these operators coincide with
   each other. In this work, we focus on the operator $\mathcal{T}$ and stick to the name transfer operator. The analysis can be straightforwardly applied to Koopman operator and
   forward transfer operator thanks to the relations among these operators.
   \label{rmk-operators}
\end{remark}

We conclude this section with a discussion on the concrete examples that are most interesting to this work.

\begin{example}
  Transfer operator approach is often used as a theoretical tool in data-driven numerical methods in order to analyze time-series
  $X_0=x(0), X_1=x(\tau), X_2=x(2\tau),\dots, X_{N-1}=x((N-1)\tau)$ of a system that is governed by a underlying SDE
  \begin{equation}
    \dd{x}(t) = b(x(t))\,\dd{t} + \sigma(x(t))\,\dd{\eta}(t)\,,\quad t\ge 0\,,
    \label{sde}
  \end{equation}
  with some drift and diffusion coefficients $b:\mathbb{R}^d\rightarrow
  \mathbb{R}^d$ and $\sigma: \mathbb{R}^d\rightarrow \mathbb{R}^{d\times d}$,
  respectively, where $\tau>0$ is called the lag-time~\cite{msm_generation},
  $x(t)\in \mathbb{R}^d$ denotes the system's state at time~$t$, and $\eta(t)$ is a $d$-dimensional Brownian motion.
  In this case, the transition density $p(x,y)$ of $X_n$ is related to the
  transition density $p^{\mathrm{SDE}}(t,y\,|x)$ of $x(t)$ in \eqref{sde}
  at time $t=\tau$, starting from $x\in \mathbb{R}^d$ at time $t=0$. The latter
  solves the Fokker-Planck equation~\cite{stoch_process_pavliotis,risken1989fpe}
    \begin{align}
      \begin{split}
	&\frac{\partial p^{\mathrm{SDE}}}{\partial t} = \mathcal{L}^\top p^{\mathrm{SDE}}, \quad t > 0\,,\\
	& p^{\mathrm{SDE}}(0,y\,|x) = \delta(x-y),\quad t = 0
      \end{split}
      \label{kolmogrov}
    \end{align}
    for $t\in (0,+\infty)$ and $y\in \mathbb{R}^d$, where the operator $\mathcal{L}^\top$, given by $\mathcal{L}^\top f=-\mathrm{div}(bf)
    + \frac{1}{2} \sum_{i,j=1}^d\frac{\partial^2 \big((\sigma\sigma^\top)_{ij}f\big)}{\partial x_ix_j}$ for a test function $f$,
    is the adjoint (with respect to the Lebesgue measure) of the generator of \eqref{sde}, and $\delta(\cdot)$ denotes the Dirac delta function.
  In many cases, the solution to \eqref{kolmogrov} is smooth for $t>0$ (see~\cite[Chapter 4]{stoch_process_pavliotis} and \cite{bogachev2022fokker} for a comprehensive study).
  In the following, we consider the transition density $p(x,y)$ in three concrete examples.
  \begin{enumerate}
    \item Brownian dynamics. Assume that SDE \eqref{sde} has the form 
  \begin{equation}
    \dd{x}(t) = -\nabla V(x(t))\,\dd{t} + \sqrt{2\beta^{-1}}\dd{\eta}(t)\,, \quad t\ge 0
    \label{sde-overdamped}
  \end{equation}
      where $V: \mathbb{R}^d\rightarrow \mathbb{R}$ is a smooth potential function and $\beta>0$ is a constant related to the system's temperature.
      The transition density of $X_n$ is then $p(x,y)=p^{\mathrm{BD}}(\tau,y\,|x)$,
      where $p^{\mathrm{BD}}(t,y\,|x)$ is the solution to \eqref{kolmogrov}
      with the generator $\mathcal{L}f =-\nabla V\cdot \nabla f +
      \frac{1}{\beta} \Delta f$.  Under certain conditions on $V$, $X_n$ is
      ergodic with the unique invariant distribution $\mu$ whose density is
      given by $\pi(x) = \frac{1}{Z} \mathrm{e}^{-\beta V(x)}$, where $Z=\int_{\mathbb{R}^d} \mathrm{e}^{-\beta V(x)}\,\dd{x}$ is a normalizing constant.
      In particular, $X_n$ is reversible and the corresponding transfer
      operator defined in \eqref{trans-operator} is given by
      $\mathcal{T}f=\mathrm{e}^{\tau \mathcal{L}}f$ (the solution to the
      backward Kolmogorov equation associated to $\mathcal{L}$~\cite[Chapter 8]{oksendalSDE}).
    \item Langevin dynamics. Assume that the evolution of $x(s)$ is governed by the (underdamped) Langevin dynamics~\cite[Section 2.2.3]{lelievre2010free}
  \begin{align}
    \begin{split}
      \dd{x}(t) =& v(t)\dd{t}\\
      \dd{v}(t) =& -\nabla V(x(t))\dd{t} - \gamma\,v(t)\dd{t} + \sqrt{2\gamma\beta^{-1}}\dd{\eta}(t)
    \end{split}
    \label{sde-langevin}
  \end{align}
      where $v(t)\in \mathbb{R}^d$ denotes the system's velocity and $\gamma>0$ is
      a friction constant. In this case, if we make the assumption that the (position) data $X_0, X_1, X_2,\dots$ comes from a Markov process and estimate its transition density using a density estimation method, then the ideal transition density (obtained as data size goes to infinite) is given by 
      \begin{equation}
      p(x,y) =
	(\frac{\beta}{2\pi})^{\frac{d}{2}}\int_{\mathbb{R}^d}\int_{\mathbb{R}^d}
	p^{\mathrm{Lan}}(\tau,y,v'|x,v)\, \mathrm{e}^{-\frac{\beta
	|v|^2}{2}}\,\dd{v}\,\dd{v'}\,,
	\label{p-langevin}
      \end{equation}
	where $p^{\mathrm{Lan}}(\tau,y,v'|x,v)$ denotes the transition density of \eqref{sde-langevin}.
      We refer to Appendix~\ref{app-sec-langevin} for detailed derivations.
      In particular, we show that the Markov process described by $p(x,y)$ in \eqref{p-langevin} is reversible, although the original Langevin dynamics is non-reversible.
\item Numerical scheme. Assume that the time-series data comes from sampling
  the SDE \eqref{sde-overdamped} using a discretization scheme 
  \begin{equation}
    X_{n+1} = X_n -\nabla V(X_n)\,\Delta t + \sqrt{2\beta^{-1}\Delta t}\,W_n, \quad n=0, 1, \dots, 
    \label{euler-maruyama}
  \end{equation}
      where $(W_n)_{n\ge 0}$ are independent standard Gaussian variables in $\mathbb{R}^d$ and $\Delta t$ is the step-size (here, for simplicity we assume Euler-Maruyama scheme~\cite[Chapter 9]{kloeden2011numerical} and $\tau=\Delta t$). In this case, the transition density of
      $X_n$ is given by $p(x,y) = \big(\frac{\beta}{4\pi\Delta
      t}\big)^{\frac{d}{2}}\exp(-\frac{\beta |y-x+\nabla V(x)\Delta t|^2}{4\Delta t})$, for $x,y\in \mathbb{R}^d$.
      The process $X_n$ is in general not reversible, but in practice this
      irreversibility due to numerical discretization is often ignored whenever $\Delta t$ is small.
      Alternatively, reversibility can be retained by adding a Matropolis-Hasting acceptance/rejection step~\cite[Chapter 2]{lelievre2010free}.
  \end{enumerate}
  \label{example-concrete}
\end{example}

\subsection{Spectrum and timescales} 
\label{subsec-spectrum}
In this section, we study the spectrum of $\mathcal{T}$ as well as its implications on the speed of convergence of the dynamics $X_n$ to equilibrium.

We assume that $X_n$ is reversible. By \eqref{reversibility}, $\mathcal{T}$ is
self-adjoint with respect to the inner product \eqref{weighted-inner-product}, and therefore its spectrum $\sigma(\mathcal{T})$ is real. 
To simplify the discussion, we further confine ourself to the case where
each nonzero element of $\sigma(\mathcal{T})$ is an eigenvalue $\lambda\in \mathbb{R}$, for which the (integral) equation 
\begin{equation}
  \mathcal{T}\varphi = \lambda \varphi\,,
  \label{eigen-problem}
\end{equation}
has a nonzero solution (eigenfunction). In the following proposition, we make our assumptions precise and give a characterization of the spectrum $\sigma(\mathcal{T})$. 

\begin{proposition}
  Assume that the transition density $p(x,y)$ is positive for all $x, y\in \mathbb{R}^d$, the operator $\mathcal{T}$ is self-adjoint with respect to
  \eqref{weighted-inner-product}, and the invariant density $\pi$ is positive. Also assume that 
  \begin{equation}
    \int_{\mathbb{R}^d} \int_{\mathbb{R}^d} p(x,y) p(y,x)\, \dd{x}\,\dd{y} < +\infty\,.
    \label{condition-hilbert-schmidt}
  \end{equation}
  Then, the following claims hold.
  \begin{enumerate}
    \item
      $\mathcal{T}$ is a compact operator on $\mathcal{H}$. Any nonzero value in $\sigma(\mathcal{T})$ is an eigenvalue of \eqref{eigen-problem}.
    \item
  Any eigenvalue $\lambda\in \mathbb{R}$ of \eqref{eigen-problem} satisfies that $-1 < \lambda \le 1$. Moreover, the only eigenfunction corresponding to $\lambda=1$ is the constant function.
  \end{enumerate}
  \label{prop-spectrum}
\end{proposition}

The proof of Proposition~\ref{prop-spectrum} is presented in Appendix~\ref{app-sec-proofs}.
The next proposition gives conditions which guarantee that all eigenvalues are nonnegative.

\begin{proposition}
  Under the assumptions in Proposition~\ref{prop-spectrum}, the following conditions are equivalent.
	  \begin{enumerate}
	    \item[(a)]
	     All eigenvalues of $\mathcal{T}$ are nonnegative.
	    \item[(b)]
	     The Markov chain $X_n$~satisfies $\mathbb{E}_{X_0\sim \mu}(f(X_0)f(X_1)) \ge 0$, for any $f \in \mathcal{H}$. 
	    \item[(c)]
	    There is a unique self-adjoint linear operator $\mathcal{A}$ on $\mathcal{H}$ such that $\mathcal{A} \ge 0$ and $\mathcal{T}=\mathcal{A}^2$.
      \end{enumerate}
      \label{prop-nonnegative-spectrum}
\end{proposition}
\begin{proof}
  Note that by the definition of $\mathcal{T}$ in \eqref{trans-operator} we have 
  \begin{align}
    \begin{split}
    \langle \mathcal{T}f, f\rangle_\mu =& \int_{\mathbb{R}^d}
      \Big(\mathbb{E}(f(X_1)|X_0=x)\Big) f(x) \mu(\dd{x})  = \mathbb{E}_{X_0\sim \mu}(f(X_0)f(X_1))\,, ~ f \in \mathcal{H}\,,
    \end{split}
    \label{proof-nonnegative-1}
  \end{align}
  where the second equality follows from the law of total expectation.
  Therefore, the equivalence between (a) and (b) follows from the variational
  principle for the smallest eigenvalue of $\mathcal{T}$ (see~\cite[Theorem~2.20]{teschl2009mathematical}).
  To prove the equivalence between (b) and (c), we notice that \eqref{proof-nonnegative-1} implies that (b) is equivalent to $\mathcal{T} \ge 0$. 
  Also, it is easy to verify that $\mathcal{T}$ is bounded. Therefore, applying the square root lemma~\cite[Theorem VI.9]{reed1981functional} we see that (b) implies (c).
  Conversely, assuming (c), we have $\langle \mathcal{T}f,f\rangle_\mu=
  \langle \mathcal{A}^2f, f\rangle_\mu = \langle \mathcal{A}f, \mathcal{A} f\rangle_\mu \ge 0$ and therefore (b) holds. 
  This shows that (b) and (c) are equivalent.
\end{proof}

In the following remark we discuss the conditions in Proposition~\ref{prop-nonnegative-spectrum}.
\begin{remark}[Positivity of eigenvalues]
  Roughly speaking, the condition (b) of
  Proposition~\ref{prop-nonnegative-spectrum} describes that ``on average, $f(X_n)$ does not change sign in two consecutive steps''.

  A situation under which the condition (c) is met is when the chain $X_n$ can
  be embedded into another reversible Markov process $Y_m$, such that $X_n=Y_{2n}$ for all $n\ge 0$.
  In particular, suppose that $X_n$ corresponds to the state of a reversible diffusion process governed by a SDE with a generator $\mathcal{L}$ 
  and let $\tau>0$ be the lag-time (see the first case in Example~\ref{example-concrete}). Then, $\mathcal{T}=\mathrm{e}^{\tau \mathcal{L}}= \mathcal{A}^2$ with $\mathcal{A}=\mathrm{e}^{\frac{\tau}{2} \mathcal{L}}$. In fact, in this case all eigenvalues of $\mathcal{T}$ are positive.

  In general, it is possible that a transfer operator $\mathcal{T}$ has negative eigenvalues $\lambda\in (-1, 0)$ under the assumptions in Proposition~\ref{prop-spectrum}.
  Such negative eigenvalues (and the corresponding eigenfunctions) may even be important in studying the large-time dynamics of $X_n$, if they are close to $-1$.
  Similar to Theorem~\ref{thm-variational-form-transfer} below, variational characterization for those negative eigenvalues (which are close to $-1$)
  can be obtained and applied to developing numerical methods.  However, we will not
  consider negative eigenvalues, since we are mainly interested in the case where the Markov chains come from reversible SDEs and the eigenvalues in this case are all positive according to the discussion above.
  \label{rmk-Positivness-eigenvalues}
\end{remark}

Let us assume that the assumptions in Proposition~\ref{prop-spectrum}, as well as one of the conditions in Proposition~\ref{prop-nonnegative-spectrum}, are satisfied.
In this case, the eigenvalues $(\lambda_i)_{i\ge 0}$ to the problem \eqref{eigen-problem} are nonnegative, and they can be ordered in a way such that  
\begin{equation}
  1=\lambda_0 > \lambda_1 \ge \lambda_2 \ge \dots\, \ge 0\,.
  \label{eigen-seq}
\end{equation}
Moreover, by Hilbert-Schmidt theorem~\cite[Theorem VI.16]{reed1981functional},
we can choose the corresponding eigenfunctions $(\varphi_i)_{i\ge 0}$, with
$\varphi_0\equiv 1$, such that they form an orthonormal basis of $\mathcal{H}$.
For a function $f\in \mathcal{H}$, from \eqref{trans-operator} we obtain  
\begin{equation}
  \begin{aligned}
    \mathbb{E}(f(X_n)|X_0=x) =& \mathbb{E}\big(\mathbb{E}(f(X_n)|X_{n-1})|X_0=x\big)
    = \mathbb{E}((\mathcal{T}f)(X_{n-1})|X_0=x) \\
    =& \dots = (\mathcal{T}^nf)(x) = \mathbb{E}_\mu(f) +
  \sum_{i=1}^{+\infty} \lambda_i^n \langle f, \varphi_i\rangle_\mu \varphi_i(x)\,,\quad n \ge 0\,,
  \end{aligned}
  \label{eqn-evolution-f-by-eigenfunc}
\end{equation}
where $\mathbb{E}_{\mu}(f)$ denotes the mean value of $f$ under $\mu$. Therefore, as the Markov chain evolves, the expectation
$\mathbb{E}(f(X_n)|X_0=x)$ converges to $\mathbb{E}_\mu(f)$ (i.e.\ the value at equilibrium), and the speed of convergence is determined
by the leading eigenvalues of~$\mathcal{T}$ that are closest to $1$. Estimating these leading eigenvalues and computing the corresponding eigenfunctions help
to understand the long-time dynamical behaviors (e.g.\ identifying large timescales and metastable states) of the process $X_n$.
In this regard, let us present the following variational characterization of the large eigenvalues and the corresponding eigenfunctions of $\mathcal{T}$ (see \cite[Theorem~2.1]{zhang2023understanding}). 

  \begin{theorem}
    Assume that the spectrum of $\mathcal{T}$ consists of discrete eigenvalues
    $(\lambda_i)_{i\ge 0}$ in \eqref{eigen-seq} with the corresponding (normalized) eigenfunctions $(\varphi_i)_{i\ge 0}$.
Given an integer $m \ge 1$ and positive constants $\omega_1 \ge \dots \ge \omega_m >0$, we have 
  \begin{align}
    \sum_{i=1}^m \omega_i(1-\lambda_i)=\min_{f_1,\dots, f_m\in\mathcal{H}} \sum_{i=1}^m \omega_i \mathcal{E}(f_i)\,,
    \label{variational-transfer-all-first-m}
  \end{align}
    where $\mathcal{E}$ is the Dirichlet energy in \eqref{energy}, and the
    minimization is over all $(f_i)_{1\le i \le m}$ in $\mathcal{H}$ under the constraints 
  \begin{equation} 
    \begin{aligned}
      \mathbb{E}_\mu (f_i) = 0, \quad \langle f_i,f_j\rangle_\mu = \delta_{ij}\,,  \quad \forall\, i,j\, \in \{1,\dots,m\}\,.
    \end{aligned}
    \label{f-constraints}
  \end{equation}
    Moreover, the minimum of
    \eqref{variational-transfer-all-first-m}--\eqref{f-constraints} is achieved when $f_i=\varphi_i$ for $1 \le i \le m$.
  \label{thm-variational-form-transfer}
\end{theorem}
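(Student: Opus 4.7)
The plan is to reformulate the objective using the Dirichlet form identity from Lemma~\ref{lemma-energy-and-msv} and then reduce the problem to the weighted Ky Fan trace principle. Since $X_n$ is assumed reversible, $\mathcal{T}^{rev} = \mathcal{T}$, so by \eqref{e-f-g-adjoint} and the normalization $\langle f_i, f_i\rangle_\mu = 1$ we have
\begin{equation*}
  \mathcal{E}(f_i) = \langle (\mathcal{I}-\mathcal{T})f_i, f_i\rangle_\mu = 1 - \langle \mathcal{T}f_i, f_i\rangle_\mu.
\end{equation*}
Hence minimizing $\sum_{i=1}^m \omega_i \mathcal{E}(f_i)$ is equivalent to maximizing $\sum_{i=1}^m \omega_i \langle \mathcal{T} f_i, f_i\rangle_\mu$ over all $\mu$-orthonormal $(f_i)$ that lie in the subspace $\{\mathbf{1}\}^\perp = \{f \in \mathcal{H} : \mathbb{E}_\mu(f)=0\}$, the latter restriction coming from the constraint $\mathbb{E}_\mu(f_i)=0$ and the fact that $\varphi_0 \equiv 1$.

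Next, I would expand each admissible $f_i$ in the orthonormal basis of eigenfunctions provided by the Hilbert--Schmidt theorem (applicable thanks to Proposition~\ref{prop-spectrum}): write $f_i = \sum_{j \ge 1} c_{ij}\varphi_j$, where $c_{i0}=0$ because of the mean-zero constraint. The orthonormality of $(f_i)_{1\le i\le m}$ then translates to $\sum_{j\ge 1} c_{ij} c_{kj} = \delta_{ik}$, while $\langle \mathcal{T}f_i, f_i\rangle_\mu = \sum_{j\ge 1} c_{ij}^2 \lambda_j$. Thus the problem becomes: maximize $\sum_{j\ge 1}\lambda_j \bigl(\sum_{i=1}^m \omega_i c_{ij}^2\bigr)$ subject to $C=(c_{ij})$ having orthonormal rows in $\{\mathbf{1}\}^\perp$.

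The main obstacle is handling the weights $\omega_1 \ge \dots \ge \omega_m$. I would exploit monotonicity via Abel summation: setting $\omega_{m+1}=0$,
\begin{equation*}
  \sum_{i=1}^m \omega_i \langle \mathcal{T}f_i, f_i\rangle_\mu = \sum_{k=1}^m (\omega_k - \omega_{k+1}) \sum_{i=1}^k \langle \mathcal{T}f_i, f_i\rangle_\mu,
\end{equation*}
with every increment $\omega_k-\omega_{k+1}\ge 0$. It then suffices to bound each partial sum $\sum_{i=1}^k \langle \mathcal{T}f_i, f_i\rangle_\mu = \operatorname{tr}(P_k \mathcal{T} P_k)$, where $P_k$ is the $\mu$-orthogonal projection onto $\operatorname{span}(f_1,\dots,f_k) \subset \{\mathbf{1}\}^\perp$. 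By the standard Ky Fan maximum principle for compact self-adjoint operators (equivalently, the Courant--Fischer characterization applied iteratively on the subspace $\{\mathbf{1}\}^\perp$ whose top eigenvalues are $\lambda_1 \ge \lambda_2 \ge \dots$), this partial trace is bounded by $\lambda_1+\dots+\lambda_k$. Reassembling via Abel summation yields $\sum_{i=1}^m \omega_i \langle \mathcal{T}f_i,f_i\rangle_\mu \le \sum_{i=1}^m \omega_i \lambda_i$, which is the desired lower bound $\sum_{i=1}^m \omega_i \mathcal{E}(f_i)\ge \sum_{i=1}^m \omega_i(1-\lambda_i)$.

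Finally, I would verify attainment at $f_i=\varphi_i$: these satisfy \eqref{f-constraints} trivially (with $\mathbb{E}_\mu(\varphi_i) = \langle \varphi_i,\varphi_0\rangle_\mu = 0$ for $i\ge 1$), and \eqref{e-f-g-adjoint} together with $\mathcal{T}\varphi_i = \lambda_i\varphi_i$ gives $\mathcal{E}(\varphi_i)= 1-\lambda_i$, so the objective equals $\sum_{i=1}^m \omega_i(1-\lambda_i)$. This matches the lower bound and completes the proof. The only technical subtlety worth spelling out is the infinite-dimensional Ky Fan inequality, which is exactly the ingredient carried over from the generator-based arguments of \cite{sz-entropy-2017,eigenpde-by-nn-ZHANG}.
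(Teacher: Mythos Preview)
Your proof is correct. The paper does not actually supply its own proof of this theorem; it quotes the result from \cite{zhang2023understanding} and remarks that the generator-based proofs in \cite{sz-entropy-2017,eigenpde-by-nn-ZHANG} adapt with minor modifications. Your argument---rewriting $\mathcal{E}(f_i)=1-\langle\mathcal{T}f_i,f_i\rangle_\mu$, Abel-summing the weights to reduce to unweighted partial sums, and invoking the Ky Fan trace inequality on $\{\mathbf 1\}^\perp$---is exactly the standard route those references take, so there is nothing substantive to contrast.
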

We mention that similar variational characterizations for eigenvalues of generator have been obtained in~\cite[Theorem~3]{sz-entropy-2017} and~\cite[Theorem~1]{eigenpde-by-nn-ZHANG}. 
The proof of Theorem~\ref{thm-variational-form-transfer}, which we present in
Appendix~\ref{app-sec-proofs}, is adapted from the proof of \cite[Theorem~1, in Appendix A]{eigenpde-by-nn-ZHANG}.

In the following remark we discuss the connection between Theorem~\ref{thm-variational-form-transfer} and the variational formulations in VAMP~\cite{Wu2020}. 
Despite the similarities, we emphasize that, by employing the Dirichlet energy $\mathcal{E}$, we are able to draw a connection between eigenfunctions and slow CVs using Theorem~\ref{thm-variational-form-transfer} (see
Theorem~\ref{thm-optimal-cv-map} and the discussions after
Remark~\ref{rmk-vamp}), and obtain a unified framework that characterizes both
the spectrum and transition rates (see Proposition~\ref{prop-committor} in Section~\ref{subsec-tpt}) of the process. 

\begin{remark}[Connections to VAMP]
Theorem~\ref{thm-variational-form-transfer} is similar to the variational formulation in VAMP in the reversible case (see~\cite[Appendix D]{Wu2020}
  and~\cite[Section~II.A]{var-koopman-model}). In fact, using the identity
  \eqref{e-f-g-adjoint} in Lemma~\ref{lemma-energy-and-msv} and the normalization constraints in \eqref{f-constraints}, we can write \eqref{variational-transfer-all-first-m} as 
  \begin{align}
    \sum_{i=1}^m \omega_i\lambda_i=\max_{f_1,\dots, f_m\in\mathcal{H}} \sum_{i=1}^m \omega_i \langle f_i, \mathcal{T}f_i\rangle_\mu\,,
    \label{vamp-1}
  \end{align}
  which recovers the VAMP-1 score defined in~\cite[Theorem~2]{Wu2020} when $\omega_i=1$ for
  $1 \le i \le m$ (the eigenvalue $\lambda_0=1$ and its eigenfunction are
  excluded by imposing the first constraint in \eqref{f-constraints}). 
  \label{rmk-vamp}
\end{remark}

Based on Theorem~\ref{thm-variational-form-transfer}, we discuss the optimality of eigenfunctions as system's CV maps.
We also refer to Remark~\ref{rmk-opt-xi-timescales} for discussions on a numerical approach (based on Theorem~\ref{thm-variational-form-transfer} and the effective dynamics in the next section) for learning CV maps that can parametrize eigenfunctions.

To explain the idea, let us first assume $k=1$ and consider scalar CV maps.
Our observation is that, for a CV map $\xi:\mathbb{R}^d\rightarrow \mathbb{R}$
to be indicative of essential transitions of $X_n$ on large timescales, it is
necessary that $\xi$ is insensitive to the noisy fluctuations on the smallest timescales. In
other words, $\xi$ should be a slow variable such that the observations $\xi(X_0), \xi(X_1),\dots$ vary slowly (in most of the time unless essential transitions occur) as the dynamics $X_n$ evolves.
This reasoning suggests that a natural way to find a good CV map $\xi$ is by minimizing the average of squared variations
\begin{equation}
  \lim_{N\rightarrow +\infty} \frac{1}{2N} \sum_{n=0}^{N-1} |f(X_{n+1}) - f(X_n)|^2
  \label{mean-squared-variations}
\end{equation}
among all non-constant (i.e.\ non-trivial) functions $f:\mathbb{R}^d\rightarrow \mathbb{R}$ that are properly normalized, where the constant $2$ is introduced for convenience. 

The above reasoning can be extended to the case of vector-valued CV maps. Let us summarize it in the following result. 
\begin{theorem}
Assume that $\omega=(\omega_1,\dots, \omega_m)^\top \in \mathbb{R}^m$ has positive components, where $\omega_1 \ge \omega_2 \ge \dots \ge \omega_m > 0$.
Denote by $|z|_\omega:= (\sum_{i=1}^m \omega_i |z_i|^2)^{1/2}$ the weighted $l^2$-norm of vectors $z=(z_1,\dots, z_m)^\top\in \mathbb{R}^m$. 
The CV map $\xi=(\varphi_1, \dots, \varphi_m)^\top$ defined by the first eigenfunctions $\varphi_1, \dots, \varphi_m$ minimizes the average of squared variations 
\begin{equation}
  \min_{f} \lim_{N\rightarrow +\infty} \frac{1}{2N} \sum_{n=0}^{N-1} |f(X_{n+1}) - f(X_n)|^2_\omega
  \label{mean-squared-variations-vector}
\end{equation}
  among functions $f=(f_1,\dots,f_m)^\top:\mathbb{R}^d\rightarrow
  \mathbb{R}^m$ whose components belong to $\mathcal{H}$ and satisfy 
  \eqref{f-constraints}.
  \label{thm-optimal-cv-map}
\end{theorem}
\begin{proof}
Using the identity \eqref{msv-is-energy} in Lemma~\ref{lemma-energy-and-msv},
  we can derive the objective in \eqref{mean-squared-variations-vector} as
  \begin{align*}
    & \lim_{N\rightarrow +\infty} \frac{1}{2N} \sum_{n=0}^{N-1} |f(X_{n+1}) -
    f(X_n)|^2_\omega \\
    =& \sum_{i=1}^m \omega_i \left(\lim_{N\rightarrow +\infty} \frac{1}{2N} \sum_{n=0}^{N-1}
  |f_i(X_{n+1}) - f_i(X_n)|^2\right) = \sum_{i=1}^m \omega_i \mathcal{E}(f_i)\,.
  \end{align*}
Therefore, Theorem~\ref{thm-variational-form-transfer} implies that the
  minimum of \eqref{mean-squared-variations-vector} under the constraints
  \eqref{f-constraints} is achieved when $f$ is defined by the first $m$ eigenfunctions.
\end{proof}

Note that the norm in \eqref{mean-squared-variations-vector} is an extension of the norm in \eqref{mean-squared-variations} when $k>1$.
The weights $\omega$ are introduced in \eqref{mean-squared-variations-vector} in order to remove the non-uniqueness of
the minimizer of \eqref{mean-squared-variations-vector} due to reorderings of $f_1,f_2,\dots, f_m$.
Condition \eqref{f-constraints} guarantees that $f_1, f_2, \dots, f_m$ are non-constant, normalized, and pairwise orthogonal (therefore linearly independent). 

\subsection{Transition rates} 
\label{subsec-tpt}
In this section, we study the transition of the (discrete-in-time and
continuous-in-space) process $X_n$ between two subsets $A, B \subset
\mathbb{R}^d$ in the framework of
TPT~\cite{tpt_eric2006,towards_tpt2006,tpt2010}. We refer to \cite{ksz2017,Helfmann2020,Helfmann_thesis_2022} for extensions of TPT to different types of Markov chains. 

Let us assume that $A, B \subset \mathbb{R}^d$ are two closed subsets with smooth boundaries $\partial A$ and $\partial B$, respectively, such that
$A\cap B=\emptyset$ and the complement of their union is nonempty, i.e.\ $(A\cup B)^c:=\mathbb{R}^d \setminus (A\cup B)\neq \emptyset$. 
Recall that TPT focuses on the \textit{reactive segments} in the trajectory of $X_n$. 
Specifically, a trajectory segment $X_m, X_{m+1}, \dots, X_{m+j}$, where $m\ge
0$ and $j\ge 1$, is called reactive, if it starts from a state in $A$ and it goes
to $B$ without returning to $A$, or equivalently, if $X_{m}\in A$, $X_{m+j}\in B$, and $X_{i} \in (A\cup B)^c$ for all $m<i<m+j$.
See Figure~\ref{fig-tpt} for illustration. Denote by $M_{N}^R$ the number of reactive
segments that occur within the first $N$ steps of the trajectory (the last reactive segment is allowed to end after the $N$th step).
Then, the transition rate of the process $X_n$ from $A$ to $B$ is defined as 
\begin{equation}
  k_{AB} = \lim_{N\rightarrow +\infty} \frac{M_N^R}{N}\,.
  \label{k-ab-def}
\end{equation}
Let us also recall the (forward) \textit{committor} $q$ associated to the sets $A$ and $B$, which is defined by
\begin{equation}
  q(x) = \mathbb{P}(\text{starting from $x$, $X_n$ enters $B$ before it enters
  $A$}), \quad x \in \mathbb{R}^d\,.
  \label{committor-eqn}
\end{equation}
In mathematical literatures, the committor $q$ is called equilibrium potential, which plays a central role in the study of metastability~\cite{Bovier2004,Bovier2005,Beltran2015}. Calculating the probability in \eqref{committor-eqn} by considering the state in the next step,
it is straightforward to verify that $q$ satisfies the equation (see~\cite[Appendix~A]{tpt_jump} for the proof in the case of Markov jump processes)
\begin{equation}
  \begin{aligned}
    & (\mathcal{T}q)(x) = q(x), \quad \forall~ x \in (A\cup B)^c\,, \\
  & q|_{A} \equiv 0, \quad q|_{B} \equiv 1\,,
  \end{aligned}
  \label{committor-equation}
\end{equation}
where $\mathcal{T}$ is the transfer operator of the process $X_n$. 

In the following, we give alternative expressions for the transition rate $k_{AB}$.
\begin{proposition}
  Let $p(x,y)$ be the transition density of $X_n$ and $\mathcal{E}$ be the Dirichlet energy defined in \eqref{energy}.  Then, the following identities hold.
  \begin{enumerate}
    \item
      $k_{AB}=k_{BA}$.
    \item
      $k_{AB}=\int_{A} \left[\int_{\mathbb{R}^d} p(x,y)
      q(y)\,\dd{y}\right]\mu(\dd{x})=\int_{B} \left[\int_{\mathbb{R}^d} p(x,y) (1-q(y))\,\dd{y}\right] \mu(\dd{x})$.
    \item
      $k_{AB} =
      \mathcal{E}(q)=\frac{1}{2}\int_{\mathbb{R}^d}\left[\int_{\mathbb{R}^d}
      p(x,y) (q(y) - q(x))^2 \dd{y}\right] \mu(\dd{x})$. 
  \end{enumerate}
  \label{prop-kab}
\end{proposition}

The proof of Proposition~\ref{prop-kab} is presented in Appendix~\ref{app-sec-proofs}. 
It is worth mentioning that the proof of Proposition~\ref{prop-kab} does not
rely on reversibility and hence Proposition~\ref{prop-kab} holds for non-reversible ergodic Markov
processes as well. In particular, the third claim of
Proposition~\ref{prop-kab} relates the transition rate $k_{AB}$ to the Dirichlet energy of the committor $q$. 
In the reversible setting (see \eqref{reversibility}), we can further obtain the following variational characterization of the committor.

\begin{proposition}
  Assume that $X_n$ is reversible. Let $q$ be the committor
  \eqref{committor-eqn} associated to the transitions of $X_n$ from $A$ to
  $B$, and let $\mathcal{E}$ denote the Dirichlet energy defined in
  \eqref{energy}. Define the space
  \begin{equation}
    \mathcal{F}_{AB} = \Big\{ f\in \mathcal{H}\,\Big|\, f|_A\equiv 0, ~ f|_B\equiv 1 \Big\}\,.
    \label{f-ab}
  \end{equation}
  We have 
  \begin{equation}
    \mathcal{E}(f) = k_{AB} + \mathcal{E}(f-q)\,, \quad \forall ~ f\in \mathcal{F}_{AB}\,.
    \label{energy-f-q}
  \end{equation}
In particular, $q$ solves the minimization problem
\begin{equation}
  \min_{f\in \mathcal{F}_{AB}} \mathcal{E}(f)\,,
  \label{min-energy-committor}
\end{equation}
  and the minimum value of \eqref{min-energy-committor} is $k_{AB}$.
  \label{prop-committor}
\end{proposition}
\begin{proof}
  It suffices to prove \eqref{energy-f-q}. Let us derive 
  \begin{equation*}
    \begin{aligned}
    \mathcal{E}(f) =& \langle f, (\mathcal{I}-\mathcal{T})f\rangle_\mu\\
      =& \big\langle q + (f - q), (\mathcal{I}-\mathcal{T})\big(q+(f-q)\big)\big\rangle_\mu\\
      =& \langle q, (\mathcal{I}-\mathcal{T})q\rangle_\mu + \langle f-q, (\mathcal{I}-\mathcal{T}) (f-q) \rangle_\mu + 2 \langle q, (\mathcal{I}-\mathcal{T})(f-q)\rangle_\mu   \\
      =& \mathcal{E}(q) + \mathcal{E}(f-q) + 2 \langle (\mathcal{I}-\mathcal{T})q, f-q\rangle_\mu \\
      =& k_{AB} + \mathcal{E}(f-q)\,,
    \end{aligned}
  \end{equation*}
where the first, the third, and the fourth equalities follow from~\eqref{e-f-g-adjoint} in Lemma~\ref{lemma-energy-and-msv} and the
  self-adjointness of $\mathcal{T}$, the last equality follows from the third
  claim of Proposition~\ref{prop-kab}, the fact that $f\equiv q$ on $A\cup B$
  thanks to the boundary condition in \eqref{f-ab} (which is met by both $f$ and $q$), as well as the identity $(\mathcal{I}-\mathcal{T})q=0$ on $(A\cup B)^c$ as stated in the equation \eqref{committor-equation}.
\end{proof}
We note that the variational formulation \eqref{min-energy-committor} is well-known in the literatures on TPT~\cite{tpt2010} and on metastability~\cite{Bovier2004}.  

Similar to Theorem~\ref{thm-optimal-cv-map} which concerns the leading eigenfunctions, we have the following characterization of the committor as a slow variable for the study of transitions from $A$ to $B$.

\begin{theorem}
Assume that $X_n$ is reversible. Given two disjoint subsets $A,B\subset \mathbb{R}^d$, the CV map defined by the committor $q$ minimizes the average of squared variations 
\begin{equation*}
  \min_{f\in \mathcal{F}_{AB}} \lim_{N\rightarrow +\infty} \frac{1}{2N}
  \sum_{n=0}^{N-1} |f(X_{n+1}) - f(X_n)|^2\,,
\end{equation*}
  where $\mathcal{F}_{AB}$ is the space defined in \eqref{f-ab}.
  \label{thm-optimal-cv-map-ab}
\end{theorem}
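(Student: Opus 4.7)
The plan is to reduce the time-averaged squared variation to the Dirichlet energy via the ergodic identity, and then invoke the variational characterization of the committor already obtained.

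First, I would apply the identity \eqref{msv-is-energy} from Lemma~\ref{lemma-energy-and-msv}: for any $f\in\mathcal{H}$, almost surely
\begin{equation*}
\lim_{N\rightarrow +\infty} \frac{1}{2N} \sum_{n=0}^{N-1} |f(X_{n+1}) - f(X_n)|^2 = \mathcal{E}(f).
\end{equation*}
This identifies the functional to be minimized in the theorem statement with the Dirichlet energy $\mathcal{E}(f)$ (whenever $f$ lies in $\mathcal{H}$ so the expression is finite; candidates $f$ violating this may be discarded since they trivially do not minimize).

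Next, since the constraints $f|_A\equiv 0$ and $f|_B\equiv 1$ are exactly the boundary conditions \eqref{bc-committor} used in Proposition~\ref{prop-committor}, I would directly invoke that proposition. It asserts that the committor $q$ solves the minimization problem \eqref{min-energy-committor} and that the identity \eqref{energy-f-q} holds, namely
\begin{equation*}
\mathcal{E}(f) = k_{AB} + \mathcal{E}(f-q)
\end{equation*}
for every admissible $f$. Since $\mathcal{E}(f-q)\ge 0$ (it is a sum of nonnegative integrands, noting that $f-q$ vanishes on $A\cup B$ so there is no boundary mismatch), the minimum equals $k_{AB}=\mathcal{E}(q)$ and is attained at $f=q$.

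Combining the two steps yields the theorem: minimizing the almost-sure limit of the average squared variations over admissible $f$ is equivalent to minimizing $\mathcal{E}(f)$ under \eqref{bc-committor}, whose minimizer is $q$. There is essentially no obstacle here, as the statement is a direct corollary of Lemma~\ref{lemma-energy-and-msv} and Proposition~\ref{prop-committor}; the only minor subtlety is confirming that the ergodic identity applies (requiring $f\in\mathcal{H}$, which is automatic for any candidate with finite Dirichlet energy and the given bounded boundary data when $\pi$ is a probability density).
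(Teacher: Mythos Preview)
Your proposal is correct and follows exactly the same approach as the paper, which simply states that the conclusion follows directly from Proposition~\ref{prop-committor} and Lemma~\ref{lemma-energy-and-msv}. Your write-up merely fills in the (obvious) details of combining the ergodic identity \eqref{msv-is-energy} with the variational characterization \eqref{energy-f-q}--\eqref{min-energy-committor}.
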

\begin{proof}
  The conclusion follows directly from Proposition~\ref{prop-committor} and Lemma~\ref{lemma-energy-and-msv}.
\end{proof}

\section{Effective dynamics for a given CV map}
\label{sec-effdyn}
In this section, we study the effective dynamics of $X_n$ for a given CV map. 
We give the definition of the effective dynamics in Section~\ref{subsec-formal-defs} and then study its properties in Section~\ref{subsec-eff-propertities}.
In particular, we show that its transition density can be characterized via
a relative entropy minimization to the transition density of $X_n$ and discuss
the application of this fact in finding CVs (see Proposition~\ref{prop-eff-kl}
and Remark~\ref{rmk-prop-eff-kl}). Unless explicitly stated, in this section we do not require the process $X_n$ to be reversible.

\subsection{Definitions}
\label{subsec-formal-defs}

Before we define the effective dynamics for $X_n$, we need to introduce several quantities associated to the CV map.
Given $1 \le k < d$, let us define the space 
\begin{equation}
  \Xi = \Big\{\xi\in C^3(\mathbb{R}^d,\mathbb{R}^k) \,\Big|\,\mbox{$\xi$ is onto, $\mbox{rank}(\nabla\xi(x)) = k$ at each $x \in \mathbb{R}^d$} \Big\}\,,
  \label{xi-set}
\end{equation}
where $\nabla \xi(x)\in \mathbb{R}^{d\times k}$ denotes the Jacobian matrix of $\xi$.
For $\xi\in\Xi$ and given $z\in \mathbb{R}^k$, the level set
\begin{equation}
  \Sigma_z := \{x\in \mathbb{R}^d\,|\, \xi(x) = z\}
  \label{level-set}
\end{equation}
is a $(d-k)$-dimensional submanifold in $\mathbb{R}^d$
(see~\cite[Remark~1]{LelievreZhang18} and
\cite[Section~1]{multiple-projection-2020}; see Figure~\ref{fig-levelsets} for illustration). We denote by $\sigma_{\Sigma_z}$ the surface measure of $\Sigma_z$
and we define the probability measure $\mu_z$ on $\Sigma_z$ as 
\begin{equation}
  \mu_z(\dd{x}) = p_z(x) \left[\mathrm{det}(\nabla\xi(x)^\top\nabla\xi(x))\right]^{-\frac{1}{2}} \sigma_{\Sigma_z}(\dd{x})\,, \quad p_z(x) =
  \frac{\pi(x)}{\widetilde{\pi}(z)}  \,,
  \label{mu-z}
\end{equation}
where $\widetilde{\pi}(z)$ is the normalizing constant defined by 
\begin{equation}
  \widetilde{\pi}(z)= \int_{\Sigma_z} \pi(x)
  [\mathrm{det}(\nabla\xi(x)^\top\nabla\xi(x))]^{-\frac{1}{2}} \,\sigma_{\Sigma_z}(\dd{x}) \,.
  \label{q-eff}
\end{equation}
A determinant term is introduced in \eqref{mu-z} and \eqref{q-eff}, so that by co-area formula~(see \cite[Theorem 3.11]{evansmeasure} and \cite[Lemma~3.2]{lelievre2010free}) we have, for all integrable functions $f: \mathbb{R}^d\rightarrow \mathbb{R}$,
\begin{equation}
  \int_{\mathbb{R}^d} f(x) \mu(\dd{x}) = \int_{\mathbb{R}^k}
  \Big(\int_{\Sigma_z} f(x)\, \mu_z(\dd{x})\Big) \widetilde{\mu}(\dd{z})\,,
  \label{co-area}
\end{equation}
where $\widetilde{\mu}$ is the pushforward measure of $\mu$ by the map $\xi$, given by 
\begin{equation}
  \widetilde{\mu}(\dd{z}) = \widetilde{\pi}(z) \dd{z}\,,\quad z \in \mathbb{R}^k\,.
  \label{mu-eff}
\end{equation}
Note that \eqref{co-area} is identical to the identity in the disintegration theorem for probability measures~\cite[Theorem~5.3.1]{ambrosio2005gradient}.
The measure $\mu_z$ is often termed as the conditional probability measure, since by \eqref{co-area} and the law of total expectation it holds
\begin{equation*}
  \mathbb{E}_{\mu_z}(f) = \int_{\Sigma_z} f(x) \,\mu_z(\dd{x}) = \mathbb{E}_{\mu}(f(x)\,|\,\xi(x)=z)\,,
\end{equation*}
for all integrable functions $f$. 
In particular, when $\xi$ is a linear map, the level set $\Sigma_z$ is a linear subspace and the expressions of quantities introduced above can be largely simplified~\cite{LEGOLL2017-pathwise,LegollLelievreSharma18}.

To define the effective dynamics for the process $X_n$, we borrow the idea
from~\cite{effective_dynamics} for the effective dynamics of SDEs.
Specifically, notice that $\xi(X_n)$ is in general a non-Markovian process in
$\mathbb{R}^k$, since the probability that $\xi(X_{n+1})\in\widetilde{D}$ at $(n+1)$th step for a subset $\widetilde{D}\subset \mathbb{R}^k$ depends on the
full state $X_n=x$ at $n$th step rather than $\xi(X_n)=z$ only, i.e.
\begin{equation}
  \begin{aligned}
    \mathbb{P}(\xi(X_{n+1})\in \widetilde{D}|X_n=x)=& \int_{\mathbb{R}^d} p(x, y) \mathbbm{1}_{\widetilde{D}}(\xi(y)) \,\dd{y}  \\
    =& \int_{\widetilde{D}} \left(\int_{\Sigma_{w}} p(x, y) 
[\mathrm{det}(\nabla\xi(y)^\top\nabla\xi(y))]^{-\frac{1}{2}} \sigma_{\Sigma_z}(\dd{y})\right) \,\dd{w}\,,
  \end{aligned}
  \label{p-xi-non-closed}
\end{equation}
where $\mathbbm{1}_{\widetilde{D}}$ denotes the indicator function of the set
$\widetilde{D}$ and the expression on the second line follows from co-area formula~\cite[Lemma~3.2]{lelievre2010free}.
Following the idea in~\cite{effective_dynamics}, one way to define a transition
density in $\mathbb{R}^k$ is to replace the expression on the right hand side
of \eqref{p-xi-non-closed} by its average over the states $X_n=x$ on $\Sigma_z$ with respect to the conditional measure $\mu_z$ in \eqref{mu-z}.
Specifically, given the CV map $\xi$, we define the effective dynamics of $X_n$ as follows.
\begin{definition}[Effective dynamics]
  The effective dynamics of $X_n$ associated to the CV map $\xi:
  \mathbb{R}^d\rightarrow \mathbb{R}^k$ is defined as the Markov process $Z_n$ in $\mathbb{R}^k$ with the transition density 
$\widetilde{p}(z,w)$, where
\begin{equation}
  \begin{aligned}
    \widetilde{p}(z,w)
=& \int_{\Sigma_{z}}\Big[ \int_{\Sigma_{w}}
    p(x, y) [\mathrm{det}(\nabla\xi(y)^\top\nabla\xi(y))]^{-\frac{1}{2}}
    \sigma_{\Sigma_{w}}(\dd{y}) \Big]\,\mu_z(\dd{x})\\
    =& \widetilde{\pi}(w) \int_{\Sigma_{z}}\left(\int_{\Sigma_{w}}
    \frac{p(x,y)}{\pi(y)} \mu_w(\dd{y}) \right)\,\mu_z(\dd{x})\,, \quad z,w \in \mathbb{R}^k\,. 
  \end{aligned}
  \label{p-z}
\end{equation}
  \label{def-effdyn}
\end{definition}
Note that the second equality in \eqref{p-z} follows from the expression in \eqref{mu-z}.
Using \eqref{co-area} and the fact that $p(x,\cdot)$ is a probability
density in $\mathbb{R}^d$, we can verify that $\widetilde{p}(z,\cdot)$ in
\eqref{p-z} indeed defines a probability density for $z\in \mathbb{R}^k$, i.e.\ $\int_{\mathbb{R}^k}
\widetilde{p}(z,w)\,\dd{w}=1$. As we show in the next subsection,
the invariant distribution of $Z_n$ is related to the free energy associated to
$\xi$~\cite[Section~3.2.1]{lelievre2010free} and its transition density
$\widetilde{p}$ gives the best approximation of $p$ in the sense of relative
entropy minimization~\cite[Section 2.3]{Cover2006}. As discussed above, the effective dynamics defined in Definition~\ref{def-effdyn} can be viewed as
the counterpart to the effective dynamics~\cite{effective_dynamics} for discrete-in-time and continuous-in-space Markov processes.
It also shares similarities with the effective dynamics of Markov chains in
previous works~\cite{zhang2016asymptoticanalysismultiscalemarkov,sharma-cg-markovchain}
and with the asymptotic Markov chains considered in the study of metastability~\cite{Beltran2010,Beltran2015,landim-resolvent-metastability}.

\begin{figure}[t!]
  \centering
  \includegraphics[width=0.5\textwidth]{./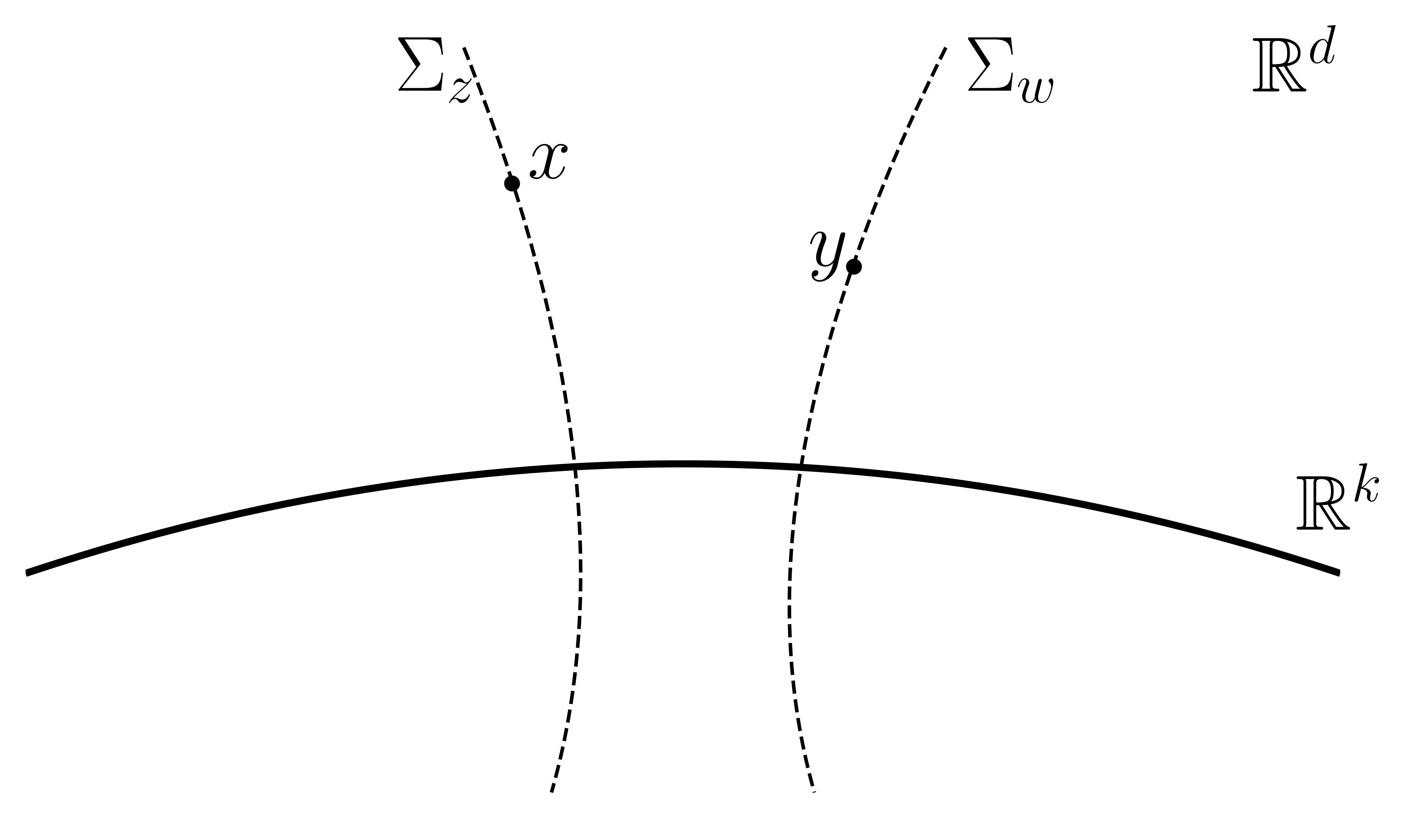}
  \caption{Illustration of level sets of the CV map $\xi: \mathbb{R}^d\rightarrow \mathbb{R}^k$ for values $z,w\in \mathbb{R}^k$. \label{fig-levelsets}}
\end{figure}

\subsection{Properties of effective dynamics}
\label{subsec-eff-propertities}

In this section, we study properties of the effective dynamics $Z_n$ in Definition~\ref{def-effdyn} whose transition density $\widetilde{p}$ is defined in \eqref{p-z}. Let us first identify its invariant probability measure.
\begin{proposition}
  The probability measure $\widetilde{\mu}$ defined in \eqref{mu-eff}  is an invariant measure of $Z_n$. 
  \label{prop-inv}
\end{proposition}
\begin{proof}
  To show the invariance of $\widetilde{\mu}$, let us compute, for $w \in \mathbb{R}^k$,
  \begin{align*}
    \left(\int_{\mathbb{R}^k} \widetilde{p}(z,w)\widetilde{\mu}(\dd{z})\right)\dd{w}\,  =& 
\widetilde{\pi}(w)\dd{w} 
\int_{\mathbb{R}^k} \int_{\Sigma_{z}}\left(\int_{\Sigma_{w}}
    \frac{p(x,y)}{\pi(y)} \mu_w(\dd{y}) \right)\,\mu_z(\dd{x})\widetilde{\mu}(\dd{z})\\
      =& \widetilde{\pi}(w)\dd{w} 
\int_{\mathbb{R}^d} \left(\int_{\Sigma_{w}} \frac{p(x,y)}{\pi(y)} \mu_w(\dd{y}) \right)\,\mu(\dd{x})\\
      =& \widetilde{\pi}(w)\dd{w} 
 \left(\int_{\Sigma_{w}} \frac{\int_{\mathbb{R}^d}p(x,y)\mu(\dd{x})}{\pi(y)}\,\mu_w(\dd{y}) \right)\\
      =& \widetilde{\pi}(w)\dd{w} = \widetilde{\mu}(\dd{w})\,,
    \end{align*}
  where the first equality follows from the second equality in \eqref{p-z}, the second equality
  follows from \eqref{co-area}, the third equality follows from an exchange of
  integrals, and the fourth equality follows from the invariance of $\mu$ in \eqref{eqn-p-pi}. 
\end{proof}

From now on, we will assume that the effective dynamics $Z_n$ is ergodic and $\widetilde{\mu}$ is its unique invariant probability measure.
Notice that, when $X_n$ comes from Brownian dynamics \eqref{sde-overdamped} with invariant density $\pi(x) = \frac{1}{Z} \mathrm{e}^{-\beta V(x)}$
(Example~\ref{example-concrete}), 
the free energy associated to the CV map $\xi$, denoted by the map $F:\mathbb{R}^k\rightarrow \mathbb{R}$, is related to the normalizing constant
in \eqref{q-eff} by $F(z)=-\beta^{-1}\ln \widetilde{\pi}(z)$~\cite[Section~3.2.1]{lelievre2010free}, where $\beta>0$ is a constant related to the system's temperature.
Therefore, Proposition~\ref{prop-inv} states that $Z_n$ as a surrogate model has the desired invariant probability measure whose density is $\widetilde{\pi}(z)=\exp(-\beta F(z))$, for $z\in \mathbb{R}^k$. 

In the following, we present a variational characterization of the transition
density~$\widetilde{p}$ of $Z_n$ by showing that $\widetilde{p}$, together
with the conditional measure $\mu_z$, gives the best approximation of $p$
within certain class of transition densities in terms of relative entropy minimization.
To this end, note that the transition density $p$ can be written as 
\begin{equation}
p(x,y)=p(w|x)\,p(y|w,x), 
  \label{p-as-product-of-conditional-density}
\end{equation}
  where $p(w|x)$ denotes the probability density of $w=\xi(y)\in
  \mathbb{R}^k$ given the value $x\in \mathbb{R}^d$, and $p(y|w,x)$ is
  the probability density of $y\in \Sigma_{w}$ given the values $w$ and $x$ (see Figure~\ref{fig-levelsets} for illustration). 
  Let us define
\begin{align}
  \begin{split}
    \widetilde{\mathcal{S}} =& \left\{\widetilde{f} \in
    C(\mathbb{R}^k,\mathbb{R}^+)\,\middle|\, \int_{\mathbb{R}^k}
    \widetilde{f}(z)\,\dd{z} = 1 \right\}\,, \\
    \widetilde{\mathcal{G}} =& \left\{\widetilde{g} \in C(\mathbb{R}^k\times \mathbb{R}^k, \mathbb{R}^+)\,\middle|\, \int_{\mathbb{R}^k} \widetilde{g}(z,w)\,\dd{w} =
    1\,,\forall z\in \mathbb{R}^k\right\}\,, \\
    \mathcal{G}_{\xi,\Sigma} =& \left\{(g_z)_{z\in \mathbb{R}^k} \,\middle|\, g_z\in C(\Sigma_z, \mathbb{R}^+), 
    \vphantom{\int_{\Sigma_{z}}} \right. \\
      & \hspace{1.6cm} \left. \int_{\Sigma_{z}} g_z(x)\,\left[\mathrm{det}(\nabla\xi(x)^\top\nabla\xi(x))\right]^{-\frac{1}{2}} \sigma_{\Sigma_z}(\dd{x}) = 1\,,\forall z\in \mathbb{R}^k\right\}\,,
  \end{split}
  \label{spaces-of-densities}
\end{align}
which are spaces of probability densities in $\mathbb{R}^k$, transition densities in $\mathbb{R}^k$, and 
  probability densities on level sets, respectively. 
  One way to find a Markov process on $\mathbb{R}^k$ that mimics the dynamics of
  $\xi(X_n)$ is to consider the approximation of $p$ within
  the set $\Theta_\xi$ of transition probability densities defined as:
\begin{align}
  \begin{split}
  \Theta_\xi = \bigg\{ g \in C(\mathbb{R}^d\times \mathbb{R}^d, \mathbb{R}^+)\,\bigg|\,&
    g(x,y)=\widetilde{g}(\xi(x),\xi(y))\,g_{\xi(y)}(y),\, \mbox{for}\, x,y \in
    \mathbb{R}^d\,, \\
    & \mbox{where} ~ \widetilde{g}\in \widetilde{\mathcal{G}}, ~ (g_{z})_{z\in \mathbb{R}^k} \in \mathcal{G}_{\xi,\Sigma} \bigg\}\,.
  \end{split}
  \label{theta-density-set}
\end{align}
In other words, we approximate the two terms on the right hand side of
\eqref{p-as-product-of-conditional-density} by assuming that the first term is
only conditioned on $z=\xi(x)$ rather than $x$ and the second term is only conditioned on $w$. 
Note that the definition of $\mathcal{G}_{\xi,\Sigma}$ in
\eqref{spaces-of-densities} makes sure that, by
co-area formula, elements in \eqref{theta-density-set} are transition densities.

The following result states that, among the probability densities in
\eqref{theta-density-set}, the one which minimizes the relative entropy (or, KL-divergence) to $p$ is given by the transition density
$\widetilde{p}$ \eqref{p-z} of the effective dynamics $Z_n$ in
Definition~\ref{def-effdyn} and the conditional expectation $\mu_z$ \eqref{mu-z}. 

\begin{proposition}
For $x\in \mathbb{R}^d$, denote by $D_{\mathrm{KL}}(p(x,\cdot)\|g(x,\cdot))$ the KL-divergence from the density $g(x,\cdot)$ to the density $p(x,\cdot)$. 
  Then the minimizer 
  \begin{equation}
    g^{\mathrm{opt}}=\arg\min_{g\in \Theta_\xi} \mathbb{E}_{x\sim \mu}\Big(D_{\mathrm{KL}}\big(p(x,\cdot)\,\|\,g(x,\cdot)\big)\Big)
    \label{kl-minimization}
  \end{equation}
  is given by $g^{\mathrm{opt}}(x,y)=\widetilde{p}(\xi(x),\xi(y))\,p_{\xi(y)}(y)$, for
  $x,y\in \mathbb{R}^d$, where $\widetilde{p}$ is the transition density
  defined in~\eqref{p-z} and $p_z$ is defined in \eqref{mu-z} (in the definition of $\mu_z$). 
  \label{prop-eff-kl}
\end{proposition}
\begin{proof}
  For any $g\in \Theta_\xi$ with the expression 
   \begin{equation}
     g(x,y)=\widetilde{g}(\xi(x),\xi(y))\,g_{\xi(y)}(y)\,, 
     \label{g-exp-in-theta}
   \end{equation}
   we show the following identities hold.
   \begin{align}
    &\mathbb{E}_{x\sim \mu}\Big(D_{\mathrm{KL}}\big(p(x,\cdot)\,\|\,g(x,\cdot)\big)\Big)
      - \int_{\mathbb{R}^d}\bigg(\int_{\mathbb{R}^d} p(x,y)\ln p(x,y) \,\dd{y}\bigg) \mu(\dd{x}) \notag \\
    =& 
      -\int_{\mathbb{R}^d}\bigg(\int_{\mathbb{R}^d} p(x,y)\ln \widetilde{g}(\xi(x),\xi(y)) \dd{y}\bigg) \mu(\dd{x}) - \int_{\mathbb{R}^d}\ln g_{\xi(x)}(x) \mu(\dd{x})\notag \\
    =& -\int_{\mathbb{R}^d}\bigg(\int_{\mathbb{R}^d} p(x,y)\ln \widetilde{g}(\xi(x),\xi(y)) \dd{y}\bigg) \mu(\dd{x})
     + \int_{\mathbb{R}^k} D_{\mathrm{KL}}(p_{z}\,\|\,g_{z}) \widetilde{\mu}(\dd{z})\notag\\
      & - \int_{\mathbb{R}^d} \ln p_{\xi(x)}(x) \mu(\dd{x}) \label{kl-derive-3}\\
    =&
      \int_{\mathbb{R}^k}\bigg(D_{\mathrm{KL}}\big(\widetilde{p}(z,\cdot)\,\|\,\widetilde{g}(z,\cdot)\big)
      + D_{\mathrm{KL}}(p_{z}\,\|\,g_{z}) \bigg) \widetilde{\mu}(\dd{z})\notag\\
    &- \int_{\mathbb{R}^k}\bigg(\int_{\mathbb{R}^k}
      (\widetilde{p}\ln\widetilde{p})(z,w)\,
      \dd{w}\bigg)\widetilde{\mu}(\dd{z}) - \int_{\mathbb{R}^d} \ln
      p_{\xi(x)}(x) \mu(\dd{x})\,.\notag
  \end{align}
In the above, $D_{\mathrm{KL}}(p_{z}\,\|\,g_{z})$ denotes the KL divergence of $g_{z}$ from $p_z$, both of which are densities on $\Sigma_{z}$ 
  with respect to $\left[\mathrm{det}(\nabla\xi(x)^\top\nabla\xi(x))\right]^{-\frac{1}{2}} \sigma_{\Sigma_z}(\dd{x})$.

  We start by calculating the first term on the first line of \eqref{kl-derive-3}. By definition of KL-divergence~\cite[Section
  2.3]{Cover2006}, we have, for $g\in \Theta_\xi$ with expression in \eqref{g-exp-in-theta}, 
\begin{align}
    &
    \mathbb{E}_{x\sim\mu}\Big(D_{\mathrm{KL}}\big(p(x,\cdot)\,\|\,g(x,\cdot)\big)\Big) \label{kl-derive-1}\\
    =& \int_{\mathbb{R}^d}\bigg(\int_{\mathbb{R}^d}
    p(x,y)\ln\frac{p(x,y)}{g(x,y)} \,\dd{y}\bigg) \mu(\dd{x})\notag \\
    =& \int_{\mathbb{R}^d}\bigg(\int_{\mathbb{R}^d} p(x,y)\ln p(x,y)
    \,\dd{y}\bigg) \mu(\dd{x}) -
    \int_{\mathbb{R}^d}\bigg(\int_{\mathbb{R}^d} p(x,y)\ln g(x,y)\,\dd{y}\bigg) \mu(\dd{x})\,,\notag
  \end{align}
  where the first term on the last line is independent of $g$. For the second term above, we can derive 
\begin{align}
    &-\int_{\mathbb{R}^d}\bigg(\int_{\mathbb{R}^d} p(x,y)\ln g(x,y)\,\dd{y}\bigg) \mu(\dd{x}) \label{kl-derive-2}\\
    =\,& -\int_{\mathbb{R}^d}\bigg(\int_{\mathbb{R}^d} p(x,y)\left(\ln
    \widetilde{g}(\xi(x),\xi(y)) +\ln g_{\xi(y)}(y)\right)\,\dd{y}\bigg) \mu(\dd{x}) \notag \\
    =\,& -\int_{\mathbb{R}^d}\bigg(\int_{\mathbb{R}^d} p(x,y)\ln
    \widetilde{g}(\xi(x),\xi(y)) \dd{y}\bigg) \mu(\dd{x}) -
    \int_{\mathbb{R}^d}\ln g_{\xi(x)}(x) \mu(\dd{x}) \notag \\
    =:& I_1 + I_2\,,\notag
\end{align}
where the first equality follows from the expression of $g$ in \eqref{g-exp-in-theta}, and the second equality follows from the invariance
  of $\mu$ in \eqref{eqn-p-pi}.  We continue to compute $I_1$ and $I_2$ above.
For $I_1$, we have 
\begin{align}
    I_1=& -\int_{\mathbb{R}^d}\bigg(\int_{\mathbb{R}^d} p(x,y)\ln \widetilde{g}(\xi(x),\xi(y)) \dd{y}\bigg) \mu(\dd{x}) \label{kl-derive-2-i1} \\
    =& - \int_{\mathbb{R}^k}\bigg(\int_{\mathbb{R}^k} \widetilde{p}(z,w)
    \ln \widetilde{g}(z,w)\, \dd{w}\bigg) \widetilde{\mu}(\dd{z}) \notag \\
    =&
    \int_{\mathbb{R}^k}D_{\mathrm{KL}}\big(\widetilde{p}(z,\cdot)\,\|\,\widetilde{g}(z,\cdot)\big) \widetilde{\mu}(\dd{z}) 
     - \int_{\mathbb{R}^k}\bigg(\int_{\mathbb{R}^k}
     (\widetilde{p}\ln\widetilde{p})(z,w)\, \dd{w}\bigg) \widetilde{\mu}(\dd{z}) \,.  \notag
\end{align}
where the first equality follows from the co-area formula, the identity \eqref{co-area} and the definition of $\widetilde{p}$ in
  \eqref{p-z}, and the second equality follows from the definition of KL-divergence.

Similarly, for $I_2$, we have 
\begin{align}
  I_2 =& - \int_{\mathbb{R}^d}\ln g_{\xi(x)}(x) \mu(\dd{x}) \label{kl-derive-2-i2} \\
  =& - \int_{\mathbb{R}^k} \left(\int_{\Sigma_z} \ln g_{z}(x) \mu_z(\dd{x})\right)\widetilde{\mu}(\dd{z}) \notag \\
  =& - \int_{\mathbb{R}^k} \left(\int_{\Sigma_z} \ln g_{z}(x) 
p_z(x) \left[\mathrm{det}(\nabla\xi(x)^\top\nabla\xi(x))\right]^{-\frac{1}{2}} \sigma_{\Sigma_z}(\dd{x}) \right)\widetilde{\mu}(\dd{z}) \notag \\
    =& \int_{\mathbb{R}^k}D_{\mathrm{KL}}(p_{z}\,\|\,g_{z}) \widetilde{\mu}(\dd{z})
    - \int_{\mathbb{R}^k}\bigg( \int_{\Sigma_{z}} \ln p_z(x) \mu_{z}(\dd{x})
    \bigg) \widetilde{\mu}(\dd{z}) \notag \\
    =& \int_{\mathbb{R}^k}D_{\mathrm{KL}}(p_{z}\,\|\,g_{z}) \widetilde{\mu}(\dd{z}) - \int_{\mathbb{R}^d}\ln p_{\xi(x)}(x) \mu(\dd{x}) \,.\notag
\end{align}
Hence, we obtain the equalities in \eqref{kl-derive-3} by combining \eqref{kl-derive-1}--\eqref{kl-derive-2-i2}.

  Since the KL-divergence is nonnegative and it equals to zero if and only if the two densities are identical, we
  conclude from the last equality in \eqref{kl-derive-3} that the objective in
  \eqref{kl-minimization} is minimized when $g$ is given in
  \eqref{g-exp-in-theta} with $\widetilde{g}=\widetilde{p}$ and $g_z=p_z$ for $z\in \mathbb{R}^k$.
\end{proof}

Proposition~\ref{prop-eff-kl} shows that, given the CV map $\xi$, the
effective dynamics $Z_n$ in Definition~\ref{def-effdyn} with the transition density $\widetilde{p}$ in \eqref{p-z} is the optimal
surrogate model in the sense of relative entropy minimization \eqref{theta-density-set}--\eqref{kl-minimization}.
But, apparently the ability of $Z_n$ in approximating/representing the true
dynamics $X_n$ depends on the choice of $\xi$. In Section~\ref{sec-error-estimates} we will study the approximation quality of
$Z_n$ in terms of estimating both the spectrum and transition rates of the true
dynamics $X_n$. We note that conditions on the choice of $\xi$ which gives a good effective dynamics have been investigated in the \textit{transition manifold} framework~\cite{Bittracher2018,optimal-rc-bittracher}.

In the following result, we transform Proposition~\ref{prop-eff-kl} into
variational formulations that allow for optimizing the CV map $\xi$. 

\begin{theorem}[Optimizing CV maps by relative entropy
  minimization]\label{thm:eff-kl} The CV map $\xi$ with the least
  KL-divergence between the full dynamics and the effective dynamics solves
  the following equivalent optimization tasks:
  \begin{align}
    &  \min_{\xi\in \Xi} \min_{g\in \Theta_\xi} \mathbb{E}_{x\sim
    \mu}\Big(D_{\mathrm{KL}}\big(p(x,\cdot)\,\|\,g(x,\cdot)\big)\Big) \label{optimal-xi-kl-minimization}\\
     \Longleftrightarrow &\min_{\substack{\xi\in \Xi, \widetilde{g}\in
     \widetilde{\mathcal{G}},\\ (g_z)\in \mathcal{G}_{\xi,\Sigma}}}
     \bigg[-\int_{\mathbb{R}^d} \Big(\int_{\mathbb{R}^d} p(x,y)\ln \widetilde{g}(\xi(x),\xi(y))\,\dd{y}\Big)
      \mu(\dd{x}) -  \int_{\mathbb{R}^d} \ln g_{\xi(x)}(x) \mu(\dd{x})\bigg] \label{optimal-xi-kl-minimization-1}\\
     \Longleftrightarrow &\min_{\xi\in \Xi, \widetilde{g}\in \widetilde{\mathcal{G}}} \bigg[-\int_{\mathbb{R}^d}
    \Big(\int_{\mathbb{R}^d} p(x,y)\ln \widetilde{g}(\xi(x),\xi(y))\,\dd{y}\Big)
      \mu(\dd{x}) +  \max_{\widetilde{f}\in \widetilde{\mathcal{S}}} \int_{\mathbb{R}^d} \ln \widetilde{f}(\xi(x)) \mu(\dd{x})  \bigg]\,, \label{optimal-xi-kl-minimization-2}
  \end{align}
  where $\Xi$ is the space of CV maps in \eqref{xi-set}, $\Theta_\xi$ is the space of transition densities defined in \eqref{theta-density-set}, and
  $\widetilde{\mathcal{G}}$, $\widetilde{\mathcal{S}}$, $\mathcal{G}_{\xi,\Sigma}$
  are the spaces of densities defined in \eqref{spaces-of-densities}.
\end{theorem}
\begin{proof}
  We apply the equalities in \eqref{kl-derive-3}.
  The equivalence between \eqref{optimal-xi-kl-minimization} and \eqref{optimal-xi-kl-minimization-1}
  follows directly from the first equality in \eqref{kl-derive-3}, noticing
  that the second term on the first line of \eqref{kl-derive-3} is independent of both $\xi$ and $g$. 
Next, we show the equivalence between \eqref{optimal-xi-kl-minimization} and \eqref{optimal-xi-kl-minimization-2}.
  Note that the second equality in \eqref{kl-derive-3} (since the
  divergence $D_{KL}(p_z\|g_z)$ is nonnegative and it vanishes when
  $g_z=p_z$) implies that \eqref{optimal-xi-kl-minimization} is equivalent to 
  \begin{equation}
    \min_{\xi\in \Xi, \widetilde{g}\in \widetilde{\mathcal{G}}} \bigg[-\int_{\mathbb{R}^d}\Big(\int_{\mathbb{R}^d}
    p(x,y)\ln \widetilde{g}(\xi(x),\xi(y))\, \dd{y}\Big)\,\mu(\dd{x}) - \int_{\mathbb{R}^d} \ln p_{\xi(x)}(x) \mu(\dd{x}) \bigg]\,.
    \label{optimal-xi-kl-minimization-explicit-tmp}
  \end{equation}
Using the expression of the density $p_z$ in \eqref{mu-z}, we obtain
  \begin{align}
    \begin{split}
    & -\int_{\mathbb{R}^d} \ln p_{\xi(x)}(x) \mu(\dd{x}) + \int_{\mathbb{R}^d}  \ln \pi(x) \mu(\dd{x}) \\
      = & \int_{\mathbb{R}^d}  \ln \widetilde{\pi}(\xi(x)) \mu(\dd{x}) \\
      =& \max_{\widetilde{f}\in \widetilde{\mathcal{S}}} \int_{\mathbb{R}^d} \ln \widetilde{f}(\xi(x)) \mu(\dd{x})
    \label{optimal-xi-kl-minimization-explicit-tmp-1}
    \end{split}
  \end{align}
  where $\widetilde{\pi}$ is the probability density on $\mathbb{R}^k$ defined in
  \eqref{q-eff}, and the second equality can be verified using co-area formula
  and the fact that $D_{KL}(\widetilde{\pi}\|\widetilde{f})$ is nonnegative for any
  probability density $\widetilde{f} \in \widetilde{\mathcal{S}}$.

The equivalence between \eqref{optimal-xi-kl-minimization} and
  \eqref{optimal-xi-kl-minimization-2} follows after we substituting  
    \eqref{optimal-xi-kl-minimization-explicit-tmp-1} into
    \eqref{optimal-xi-kl-minimization-explicit-tmp} and noticing that the
    second term on the first line of \eqref{optimal-xi-kl-minimization-explicit-tmp-1} is independent of both
    $\xi$ and $\widetilde{g}$.
\end{proof}

\begin{remark}[Algorithmic realization]
Theorem~\ref{thm:eff-kl} provides two possible data-driven numerical approaches for
  jointly learning $\xi$ and the transition density $\widetilde{p}$ of the effective dynamics.
  (see \cite[Section~4]{optimal-rc-bittracher} for alternative objective loss functions involving weighted $L^1$ norm of densities). 
  \begin{enumerate}
    \item
  The first approach is to solve \eqref{optimal-xi-kl-minimization} (or \eqref{optimal-xi-kl-minimization-1}). It boils down to minimizing the negative log-likelihood with trajectory data $X_0, X_1,\dots, X_N$:
  \begin{equation}
\min_{\xi\in \Xi} \min_{g \in \Theta_\xi} \Big[-\frac{1}{N} \sum_{n=0}^{N-1} \ln g(X_n, X_{n+1}) \Big]\,.
    \label{optimal-xi-likelihood}
  \end{equation}
  This is close to the loss function proposed in \cite{hao-rc-flow} for learning CV maps based on normalizing flows~\cite{KobyzevPAMI2020}.
\item
  The second approach is to solve \eqref{optimal-xi-kl-minimization-2}. With trajectory data, it amounts to solving 
  \begin{align}
    \begin{split}
      \min_{\xi\in \Xi} \Big[& -\max_{\widetilde{g} \in
      \widetilde{\mathcal{G}}} \frac{1}{N} \sum_{n=0}^{N-1} \ln \widetilde{g}(\xi(X_n),
      \xi(X_{n+1})) +  \max_{\widetilde{f} \in \widetilde{\mathcal{S}}}
      \frac{1}{N}\sum_{n=0}^{N-1} \ln \widetilde{f}(\xi(X_n)) \Big]\,.
    \end{split}
    \label{optimal-xi-likelihood-2nd}
  \end{align}
  \end{enumerate}
In \eqref{optimal-xi-likelihood}, learning $g$ requires to learn the densities
  $(g_{z})_{z\in \mathbb{R}^k}$ on (high-dimensional) level sets. 
  In contrast, both $\widetilde{g}$ and $\widetilde{f}$ in
  \eqref{optimal-xi-likelihood-2nd} are functions in the low-dimensional space $\mathbb{R}^k$.
Hence, we expect the second approach to have less complexity in comparison with the first approach.
  \label{rmk-prop-eff-kl}
\end{remark}

Next, we study the transfer operator of the effective dynamics. In analogy to the transfer operator $\mathcal{T}$ of $X_n$ (see \eqref{trans-operator}), we define the transfer operator associated to $Z_n$ by 
\begin{equation}
  (\widetilde{\mathcal{T}}\widetilde{f}\,)(z) = \int_{\mathbb{R}^k}
  \widetilde{p}(z,w) \widetilde{f}(w)\, \dd{w}\,, \quad z\in \mathbb{R}^k
  \label{trans-operator-eff}
\end{equation}
for functions in the Hilbert space 
\begin{equation*}
  \widetilde{\mathcal{H}} = L^2_{\widetilde{\mu}}(\mathbb{R}^k):=
  \Big\{\widetilde{f}\, \Big|\, \widetilde{f}: \mathbb{R}^k\rightarrow
  \mathbb{R},\, \int_{\mathbb{R}^k} \widetilde{f}^{\,2}(z)\, \widetilde{\mu}(\dd{z}) < +\infty\Big\}\,,
  \end{equation*}
  which is endowed with the weighted inner product (the corresponding norm is denoted by $\|\cdot\|_{\widetilde{\mu}}$)
  \begin{equation}
    \langle \widetilde{f}, \widetilde{h}\,\rangle_{\widetilde{\mu}} =
    \int_{\mathbb{R}^k} \widetilde{f}(z) \widetilde{h}(z) \widetilde{\mu}(\dd{z}) \,,
    \quad \widetilde{f}, \widetilde{h} \in \widetilde{\mathcal{H}}\,.
    \label{inner-product-eff}
  \end{equation}

The following lemma reveals the relations between the transfer operators $\widetilde{\mathcal{T}}$ and $\mathcal{T}$. 
\begin{lemma}
  Let $\mathcal{T}$ be the transfer operator in \eqref{trans-operator} associated to $X_n$ and $\widetilde{\mathcal{T}}$ be the transfer operator in \eqref{trans-operator-eff} associated to $Z_n$. Then, the following relations hold.
  \begin{enumerate}
    \item
      For any $\widetilde{f}\in \widetilde{\mathcal{H}}$, we have $(\widetilde{\mathcal{T}}\,\widetilde{f})(z) =
      \mathbb{E}_{\mu_z}(\mathcal{T}f)$ for all $z\in \mathbb{R}^k$, where $f=\widetilde{f}\circ \xi$.
    \item
      For any two functions $\widetilde{f}, \widetilde{h}\in \widetilde{\mathcal{H}}$, we have $\langle \widetilde{\mathcal{T}}
      \widetilde{f}, \widetilde{h} \rangle_{\widetilde{\mu}}=\langle \mathcal{T}f, h\rangle_{\mu}$, where $f=\widetilde{f}\circ \xi$ and $h=\widetilde{h}\circ \xi$.
  \end{enumerate}
  \label{lemma-tran-tran-eff}
\end{lemma}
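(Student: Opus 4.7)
The plan is to verify both claims by direct computation, starting with the delta-function form of $\widetilde{p}$ in \eqref{p-z-delta} and then using the disintegration formula implicit in \eqref{mu-z-delta}--\eqref{mu-eff} (or equivalently the identity \eqref{identity-delta}) to convert integrals over $\mathbb{R}^d$ weighted by $\pi$ into iterated integrals over the level sets $\Sigma_z$ and over the CV space $\mathbb{R}^k$ weighted by $Q$.

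For the first claim, I would begin from the definition \eqref{trans-operator-eff} and substitute the first line of \eqref{p-z-delta}, which presents $\widetilde{p}(z,\hat{z})$ already as an integral of $p(x,y)\delta(\xi(y)-\hat{z})$ over $y\in\mathbb{R}^d$ and $x\in\Sigma_z$ against $\mu_z$. Swapping the order of the $\hat{z}$-integral and the $(x,y)$-integral, the inner integral becomes $\int_{\mathbb{R}^k}\widetilde{g}(\hat{z})\,\delta(\xi(y)-\hat{z})\,\dd{\hat{z}}=\widetilde{g}(\xi(y))=g(y)$ by \eqref{identity-delta}. What remains is $\int_{\Sigma_z}\bigl[\int_{\mathbb{R}^d}p(x,y)g(y)\,\dd{y}\bigr]\mu_z(\dd{x})=\int_{\Sigma_z}(\mathcal{T}g)(x)\,\mu_z(\dd{x})=\mathbb{E}_{\mu_z}(\mathcal{T}g)$, which is exactly the desired identity.

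For the second claim, I would chain the first claim with the disintegration of $\mu$ along $\xi$. Concretely, using \eqref{inner-product-eff} and the first claim,
\begin{equation*}
\langle\widetilde{\mathcal{T}}\widetilde{f},\widetilde{g}\rangle_{\widetilde{\mu}}
=\int_{\mathbb{R}^k}\widetilde{g}(z)\Bigl[\int_{\Sigma_z}(\mathcal{T}f)(x)\,\mu_z(\dd{x})\Bigr]Q(z)\,\dd{z}.
\end{equation*}
Since $g=\widetilde{g}\circ\xi$, on $\Sigma_z$ we have $\widetilde{g}(z)=g(x)$, so the bracketed expression equals $\int_{\Sigma_z}g(x)(\mathcal{T}f)(x)\,\mu_z(\dd{x})$. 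Combining with $Q(z)\,\dd{z}=\widetilde{\mu}(\dd{z})$ from \eqref{mu-eff} and the disintegration $\mu(\dd{x})=\mu_z(\dd{x})\,\widetilde{\mu}(\dd{z})$ (which is precisely \eqref{mu-z-delta} and \eqref{q-eff-delta} read together, or \eqref{identity-delta} applied with $f(x)=g(x)(\mathcal{T}f)(x)\pi(x)/Q(\xi(x))$-style bookkeeping), the double integral collapses to $\int_{\mathbb{R}^d}g(x)(\mathcal{T}f)(x)\pi(x)\,\dd{x}=\langle\mathcal{T}f,g\rangle_{\mu}$.

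There is no real obstacle here; the only subtlety is being careful with the manipulation of the Dirac delta when passing from \eqref{p-z-delta} through the interchange of integrals, but this can be handled either informally (mirroring the paper's convention) or rigorously via the co-area formulation \eqref{p-z} together with the disintegration theorem, both of which are already invoked in Remark~\ref{rmk-rigorous-definitions}. Claim 2 follows almost for free from Claim 1 combined with disintegration, so the real content is Claim 1.
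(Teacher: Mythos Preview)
Your proposal is correct and follows essentially the same route as the paper's own proof: Claim~1 is obtained by substituting the first line of \eqref{p-z-delta} into \eqref{trans-operator-eff}, collapsing the $\hat{z}$-integral via the delta, and recognizing $\mathcal{T}g$ and then $\mathbb{E}_{\mu_z}(\mathcal{T}g)$; Claim~2 is obtained from Claim~1 together with the disintegration of $\mu$ along $\xi$ (the paper writes this out using \eqref{mu-z-delta} and \eqref{identity-delta} rather than saying ``disintegration'', but the computation is identical).
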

\begin{proof}
  Concerning the first claim, given $\widetilde{f}\in \widetilde{\mathcal{H}}$, let us compute 
  \begin{equation*}
    \begin{aligned}
      (\widetilde{\mathcal{T}}\,\widetilde{f})(z) =& \int_{\mathbb{R}^k} \widetilde{p}(z,w) \widetilde{f}(w)\, \dd{w} \\
      =&\int_{\mathbb{R}^k}
\left[ \widetilde{\pi}(w) \int_{\Sigma_{z}}\left(\int_{\Sigma_{w}} \frac{p(x,y)}{\pi(y)} \mu_w(\dd{y}) \right)\,\mu_z(\dd{x})
       \right] \widetilde{f}(w)\, \dd{w} \\
      =&
\int_{\Sigma_{z}}
\left[ \int_{\mathbb{R}^k}  \left(\int_{\Sigma_{w}} \frac{p(x,y)}{\pi(y)} \mu_w(\dd{y}) \right)\,
        \widetilde{f}(w)\,\widetilde{\mu}(\dd{w})\right]\mu_z(\dd{x}) \\
      =&
\int_{\Sigma_{z}}
      \left( \int_{\mathbb{R}^d}   \frac{p(x,y)}{\pi(y)} \widetilde{f}(\xi(y))\mu(\dd{y}) \right)\mu_z(\dd{x}) \\
      =&
\int_{\Sigma_{z}} \left( \int_{\mathbb{R}^d} p(x,y) \widetilde{f}(\xi(y))\dd{y} \right)\mu_z(\dd{x}) \\
      =&\mathbb{E}_{\mu_z}(\mathcal{T}f)\,,
    \end{aligned}
  \end{equation*}
  where $f=\widetilde{f}\circ\xi$, and we have used
  \eqref{trans-operator-eff}, the second equality in \eqref{p-z}, as well as \eqref{co-area} to derive the above equalities. 
  Concerning the second claim, using \eqref{mu-z}, \eqref{inner-product-eff},
  the first claim, and the identity~\eqref{co-area}, we can compute
  \begin{equation*}
    \begin{aligned}
      \langle \widetilde{\mathcal{T}}\,\widetilde{f},
      \widetilde{h}\rangle_{\widetilde{\mu}} =& \int_{\mathbb{R}^k}
      \big(\mathbb{E}_{\mu_z} (\mathcal{T}f)\big)  \widetilde{h}(z) \widetilde{\mu}(\dd{z}) \\
      =& \int_{\mathbb{R}^k} \left( \int_{\Sigma_{z}}
	 (\mathcal{T}f)(x) \mu_z(\dd{x}) \right) \widetilde{h}(z)
	 \widetilde{\mu}(\dd{z}) \\
      =&  \int_{\mathbb{R}^d} (\mathcal{T}f)(x) \widetilde{h}(\xi(x)) \mu(\dd{x})  \\
      =&  \langle \mathcal{T}f, h\rangle_{\mu}\,,  
    \end{aligned}
  \end{equation*}
  where $f=\widetilde{f}\circ\xi$ and $h=\widetilde{h}\circ\xi$.

\end{proof}

Using the above relation, we discuss in the following remark the relevance of effective dynamics to data-driven numerical algorithms in VAMP when the system's features are employed. 

\begin{remark}[Relevance of effective dynamics in numerical algorithms]
  Suppose that in applications the VAMP-r score is maximized in order to learn
  the $m$ leading eigenfunctions $(f_i)_{1\le i\le m}$ of $\mathcal{T}$ (see Remark~\ref{rmk-vamp} and~\cite{Wu2020,vampnet}). 
Often, these eigenfunctions are represented as functions of system's features, i.e.\ $f_i(x)=\widetilde{f}_i(\mathbf{R}(x))$, where $\widetilde{f}_i: \mathbb{R}^K\rightarrow \mathbb{R}$, integer $K \gg 1$, and 
  $\mathbf{R}=(\mathbf{R}_1,\dots, \mathbf{R}_{K})^\top:
  \mathbb{R}^d\rightarrow \mathbb{R}^{K}$ is a map describing $K$ features (e.g.\ internal variables) of the system $X_n$.  
Consider this specific CV map $\xi=\mathbf{R}$ and define the quantities in Section~\ref{subsec-formal-defs} accordingly.
Using the second claim in Lemma~\ref{lemma-tran-tran-eff}, we obtain for the VAMP-r score 
  \begin{align}
    \sum_{i=1}^m \langle f_i, \mathcal{T} f_i\rangle_\mu^r = \sum_{i=1}^m
    \langle \widetilde{f}_i, \widetilde{\mathcal{T}} \widetilde{f}_i\rangle_{\widetilde{\mu}}^r \,,
%    \label{eqn-rmk-vamp-1}
  \end{align}
  and the constraints in \eqref{f-constraints} are equivalent to 
  \begin{equation} 
      \mathbb{E}_{\widetilde{\mu}} (\widetilde{f}_i)=0, \quad \langle \widetilde{f}_i, \widetilde{f}_j\rangle_{\widetilde{\mu}} = \delta_{ij}\,,  \quad \forall\, i,j\, \in \{1,\dots,m\}\,.
    \label{f-constraints-eff}
  \end{equation}
  In other words, approximating eigenfunctions of $\mathcal{T}$ by maximizing VAMP-r score among functions of system's features is equivalent to approximating eigenfunctions of $\widetilde{\mathcal{T}}$ associated to the feature map $\mathbf{R}$.
  \label{vamp-r-eff}
\end{remark}

A similar remark can be made on the relevance of effective dynamics to numerical algorithms for computing committor based on solving the variational
problem in \eqref{min-energy-committor}. In the last part of this section, we discuss the reversibility of $Z_n$ and its connection to the
reversibility of $X_n$. Denote by $\widetilde{\mathcal{T}}^*$ the adjoint of
$\widetilde{\mathcal{T}}$ in $\widetilde{\mathcal{H}}$, which satisfies
$\langle \widetilde{\mathcal{T}}\widetilde{f}, \widetilde{h}\,\rangle_{\widetilde{\mu}} = \langle \widetilde{f},
\widetilde{\mathcal{T}}^*\widetilde{h}\,\rangle_{\widetilde{\mu}}$ for any $\widetilde{f}, \widetilde{h}\in \widetilde{\mathcal{H}}$. 
The following result gives the concrete expression of $\widetilde{\mathcal{T}}^*$.
\begin{proposition}
  For $\widetilde{f} \in \widetilde{\mathcal{H}}$, we have the following identity 
\begin{equation}
  (\widetilde{\mathcal{T}}^*\widetilde{f}\,)(z) = \int_{\mathbb{R}^k}
  \widetilde{p}^*(z,w) \widetilde{f}(w)\, \dd{w}\,, \quad z \in \mathbb{R}^k\,,
  \label{trans-operator-eff-adjoint}
\end{equation}
with the transition density 
\begin{equation}
  \begin{aligned}
  \widetilde{p}^*(z,w) =
    \frac{\widetilde{p}(w,z)\widetilde{\pi}(w)}{\widetilde{\pi}(z)} = &
\int_{\Sigma_{z}} \left(\int_{\Sigma_{w}} p^*(x, y)
       [\mathrm{det}(\nabla\xi(y)^\top\nabla\xi(y))]^{-\frac{1}{2}}
       \sigma_{\Sigma_{w}}(\dd{y}) \right)\mu_{z}(\dd{x}) \\
       =& \widetilde{\pi}(w) \int_{\Sigma_{z}} \left(\int_{\Sigma_{w}}
       \frac{p^*(x, y)}{\pi(y)}
       \mu_{w}(\dd{y}) \right)\mu_{z}(\dd{x})\,, 
  \end{aligned}
  \label{p-z-adjoint}
\end{equation}
for $z,w \in \mathbb{R}^k$, where $p^*$ is given in \eqref{p-adjoint}.
  \label{prop-trans-operator-adjoint}
\end{proposition}
\begin{proof}
For any $\widetilde{f}, \widetilde{h}\in \widetilde{\mathcal{H}}$, direct calculation gives
  \begin{equation*}
    \begin{aligned}
    \langle \widetilde{\mathcal{T}}^*\widetilde{f}, \widetilde{h}\rangle_{\widetilde{\mu}} = 
\langle \widetilde{f}, \widetilde{\mathcal{T}}\widetilde{h}\rangle_{\widetilde{\mu}}
      =&\int_{\mathbb{R}^k} \bigg[\int_{\mathbb{R}^k}  \widetilde{p}(z,w)
      \widetilde{h}(w)\,\dd{w}\bigg]\widetilde{f}(z) \widetilde{\mu}(\dd{z})\\
      =&\int_{\mathbb{R}^k} \bigg[\int_{\mathbb{R}^k}
      \frac{\widetilde{p}(z,w)\widetilde{\pi}(z)}{\widetilde{\pi}(w)}
      \widetilde{f}(z)\, \dd{z}\bigg]
      \widetilde{h}(w)\,\widetilde{\mu}(\dd{w}) \,,
    \end{aligned}
  \end{equation*}
   from which we obtain \eqref{trans-operator-eff-adjoint} with $\widetilde{p}^*$ given by the first equality in \eqref{p-z-adjoint}.
   To prove the second equality in \eqref{p-z-adjoint}, let us derive
   \begin{equation*}
     \begin{aligned}
     \widetilde{p}^*(z,w) =& \frac{\widetilde{p}(w,z)\widetilde{\pi}(w)}{\widetilde{\pi}(z)} \\
=& \widetilde{\pi}(w) 
       \int_{\Sigma_{w}}\left(\int_{\Sigma_{z}} \,\frac{p(y, x)}{\pi(x)}\,\mu_{z}(\dd{x}) \right)\mu_{w}(\dd{y}) \\
=& \widetilde{\pi}(w) 
       \int_{\Sigma_{z}}\left(\int_{\Sigma_{w}} \,\frac{p^*(x, y)}{\pi(y)}\,\mu_{w}(\dd{y}) \right)\mu_{z}(\dd{x}) \\
=& \int_{\Sigma_{z}} \left(\int_{\Sigma_{w}}( p^*(x, y)\,
       [\mathrm{det}(\nabla\xi(y)^\top\nabla\xi(y))]^{-\frac{1}{2}}
       \sigma_{\Sigma_{w}}(\dd{y}) \right)\,\mu_{z}(\dd{x})\,, 
   \end{aligned}
   \end{equation*}
   where the second equality follows from the second equality in \eqref{p-z}, the third equality follows from the definition of $p^*$
   in \eqref{p-adjoint} and an exchange of integrals, the fourth equality
   follows from the definition of $\mu_w$ in \eqref{mu-z}.
\end{proof}

Proposition~\ref{prop-trans-operator-adjoint} implies that
$\widetilde{\mathcal{T}}^*$ is the transfer operator associated to the
effective dynamics of the adjoint process of $X_n$ (the process defined by the transition density $p^*$). As a simple application of Proposition~\ref{prop-trans-operator-adjoint}, we have the following result on the reversibility of $Z_n$.
\begin{corollary}
  Suppose that $X_n$ is reversible. Then, the effective process $Z_n$ is also reversible.
  \label{corollary-reversibility-eff}
\end{corollary}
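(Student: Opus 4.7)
The plan is to reduce the corollary immediately to the transition-density identities proved in Proposition~\ref{prop-trans-operator-adjoint}, combined with the reversibility characterization \eqref{reversibility} applied now to the effective process $Z_n$. No extra machinery should be required: the second identity in \eqref{p-z-adjoint} is precisely designed so that the adjoint transition density of $Z_n$ is expressed as an average of $p^*$ against the conditional measure $\mu_z$, in exact structural analogy with the defining formula \eqref{p-z-delta} for $\widetilde{p}$.

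Concretely, I would first observe that reversibility of $X_n$, by \eqref{reversibility}, is equivalent to the pointwise identity $p^*(x,y) = p(x,y)$ for all $x,y \in \mathbb{R}^d$. Substituting this into the second expression for $\widetilde{p}^*(z,\hat{z})$ in \eqref{p-z-adjoint} yields
\begin{equation*}
  \widetilde{p}^*(z,\hat{z}) = \int_{\Sigma_{z}}\int_{\mathbb{R}^d} p(x,y) \delta(\xi(y)-\hat{z})\,\dd{y}\,\mu_z(\dd{x}) = \widetilde{p}(z,\hat{z}),
\end{equation*}
where the last equality is exactly Definition~\ref{def-effdyn}, namely the first line of \eqref{p-z-delta}. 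Hence $\widetilde{p}^* \equiv \widetilde{p}$, which, applying the same chain of equivalences \eqref{reversibility} now to $Z_n$ with respect to $\widetilde{\mu}$, amounts to $Z_n$ satisfying detailed balance, i.e.\ being reversible.

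Since essentially every step is a direct quotation of results already in hand, there is no genuine obstacle; the only thing to be mindful of is that the characterization \eqref{reversibility} was stated for $X_n$ and $\mathcal{T}$, so one should note explicitly that the same derivation works verbatim for $Z_n$, $\widetilde{\mathcal{T}}$, $\widetilde{p}$, and $\widetilde{p}^*$, using the invariance of $\widetilde{\mu}$ from Proposition~\ref{prop-inv} and the adjoint identity from Proposition~\ref{prop-trans-operator-adjoint}. An equivalent one-line route avoiding the pointwise manipulation is to use the first identity in \eqref{p-z-adjoint}, $\widetilde{p}^*(z,\hat{z})Q(z) = \widetilde{p}(\hat{z},z)Q(\hat{z})$, and note that the detailed balance $p(x,y)\pi(x) = p(y,x)\pi(y)$ of $X_n$ transfers to the analogous detailed balance of $\widetilde{p}$ with respect to $Q$ by integrating the symmetric relation $p(x,y)\pi(x)\delta(\xi(x)-z)\delta(\xi(y)-\hat{z}) = p(y,x)\pi(y)\delta(\xi(x)-z)\delta(\xi(y)-\hat{z})$ against $\dd{x}\,\dd{y}$ and invoking \eqref{p-z-delta}; either route gives the conclusion in a few lines.
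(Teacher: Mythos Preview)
Your proof is correct and follows essentially the same approach as the paper: use $p^*=p$ from \eqref{reversibility}, substitute into the second expression in \eqref{p-z-adjoint} to obtain $\widetilde{p}^*=\widetilde{p}$, and conclude that $\widetilde{\mathcal{T}}$ is self-adjoint (equivalently, $Z_n$ satisfies detailed balance). The alternative route you sketch via the first identity in \eqref{p-z-adjoint} is also fine but not needed.
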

\begin{proof}
  The fact that $X_n$ is reversible implies $p^*=p$ (see~\eqref{reversibility}). In this case, we obtain
  $\widetilde{p}^*=\widetilde{p}$ by comparing \eqref{p-z-adjoint} to \eqref{p-z},
  and \eqref{trans-operator-eff-adjoint} implies that $\widetilde{\mathcal{T}}^*=\widetilde{\mathcal{T}}$.
  Therefore, $\widetilde{\mathcal{T}}$ is self-adjoint and the effective process $Z_n$ is reversible.
\end{proof}

Before concluding this section, let us introduce the Dirichlet energy
associated to the effective dynamics $Z_n$ (assuming reversibility). 
 Similar to the Dirichlet energy \eqref{energy} associated to $X_n$, for a test function $\widetilde{f}:\mathbb{R}^k\rightarrow \mathbb{R}$, we define its Dirichlet energy associated to $Z_n$ as 
\begin{equation}
  \widetilde{\mathcal{E}}(\widetilde{f}) = \frac{1}{2}
  \int_{\mathbb{R}^k}\left(\int_{\mathbb{R}^k} \widetilde{p}(z,w)
  (\widetilde{f}(w)-\widetilde{f}(z))^2 \dd{w}\right) \widetilde{\mu}(\dd{z})\,.
  \label{energy-eff}
\end{equation}
In analogy to \eqref{e-f-g-adjoint}, applying Lemma~\ref{lemma-tran-tran-eff} we have (for the first identity see the proof of Lemma~\ref{lemma-energy-and-msv})
\begin{equation}
     \widetilde{\mathcal{E}}(\widetilde{f}) = \langle (\mathcal{I}-\widetilde{\mathcal{T}})\widetilde{f}, \widetilde{f}\rangle_{\widetilde{\mu}} =
  \langle (\mathcal{I}-\mathcal{T})(\widetilde{f}\circ\xi),
  \widetilde{f}\circ\xi\rangle_\mu = \mathcal{E}(\widetilde{f}\circ\xi)\,.
  \label{energy-and-its-eff}
\end{equation}
Both \eqref{energy-eff} and \eqref{energy-and-its-eff} will be useful in the next section when we compare eigenvalues and transition rates of $X_n$ to those of $Z_n$.

\section{Error estimates for effective dynamics}
\label{sec-error-estimates}

In this section, we study the approximation of the effective dynamics to the original process $X_n$.
We assume that $X_n$ is reversible and therefore its effective dynamics is also reversible by Corollary~\ref{corollary-reversibility-eff}.
In Section~\ref{subsec-error-timescales} and Section~\ref{subsec-error-rates},
we provide error estimates for the effective dynamics in approximating timescales and transition rates of $X_n$, respectively. 
More importantly, we discuss how these estimates are relevant in the context of optimizing CV maps.
Similar error estimates for effective dynamics of diffusion processes have been obtained in~\cite{effective_dyn_2017}. 
In Section~\ref{subsec-transformation}, we compare the timescales and transition rates of effective dynamics associated to two CV maps.

\subsection{Timescales}
\label{subsec-error-timescales}

First, we compare the timescales of the effective dynamics~$Z_n$ to the timescales of $X_n$.
Similar to the eigenvalue problem \eqref{eigen-problem} in
Section~\ref{subsec-spectrum} associated to $X_n$, we introduce the eigenvalue
problem associated to the effective dynamics $Z_n$ in $\mathbb{R}^k$:
\begin{equation}
  \widetilde{\mathcal{T}}\,\widetilde{f} = \widetilde{\lambda}\,\widetilde{f}\,,
  \label{eigen-problem-eff}
\end{equation}
where 
$\widetilde{\mathcal{T}}$ is the transfer operator in
\eqref{trans-operator-eff}, $\widetilde{\lambda} \in \mathbb{R}$, and the function $\widetilde{f}:\mathbb{R}^k\rightarrow \mathbb{R}$ is normalized such that
$\langle \widetilde{f}, \widetilde{f}\rangle_{\widetilde{\mu}}=1$.
In analogy to the discussions in Section~\ref{subsec-spectrum}, we also assume
that the spectrum of $\widetilde{\mathcal{T}}$ consists of eigenvalues $(\widetilde{\lambda}_i)_{i\ge 0}$ of \eqref{eigen-problem-eff},
which are within the range $(0, 1]$ and can be sorted in a way such that 
\begin{equation}
  1=\widetilde{\lambda}_0 > \widetilde{\lambda}_1 \ge \widetilde{\lambda}_2 \ge \dots \ge 0\,,
  \label{eigen-seq-eff}
\end{equation}
with the corresponding orthonormal eigenfunctions $(\widetilde{\varphi}_i)_{i\ge 0}$, where $\widetilde{\varphi}_i:\mathbb{R}^k\rightarrow \mathbb{R}$ for $i\ge 0$ and $\widetilde{\varphi}_0\equiv 1$.
This assumption is true when the density $\widetilde{p}$ and the transfer
operator $\widetilde{\mathcal{T}}$ satisfy the conditions of Proposition~\ref{prop-spectrum} and Proposition~\ref{prop-nonnegative-spectrum}.

We have the following result comparing the eigenvalues of $\widetilde{\mathcal{T}}$ to the eigenvalues of $\mathcal{T}$.
\begin{theorem}
  For each $i=1,2,\dots$, let $\lambda_i$ be the eigenvalue of
  $\mathcal{T}$ in \eqref{eigen-seq} and $\varphi_i$ be the
  corresponding normalized eigenfunction. Let $\widetilde{\lambda}_i$ be the
  eigenvalue of $\widetilde{\mathcal{T}}$ in \eqref{eigen-seq-eff} and
  $\widetilde{\varphi}_i$ be the corresponding normalized eigenfunction. Let
  $\mathcal{E}$ denote the Dirichlet energy~\eqref{energy} associated to $X_n$.
  The following claims hold.
  \begin{enumerate}
    \item
    $\widetilde{\lambda}_i \le \lambda_i$.
    \item
For any $j=1,2,\dots$, we have $\lambda_i - \widetilde{\lambda}_j = \mathcal{E}(\widetilde{\varphi}_j\circ\xi-\varphi_i) - (1-\lambda_i)\|\widetilde{\varphi}_j\circ\xi-\varphi_i\|^2_\mu$. 
  \item
    Assume that there is a function $\widetilde{h}\in \widetilde{\mathcal{H}}$ such that $\varphi_i = \widetilde{h}\circ \xi$. Then, $\lambda_i$ is an eigenvalue
      of $\widetilde{\mathcal{T}}$ and $\widetilde{h}$ is the corresponding
      eigenfunction, i.e. $\widetilde{\lambda}_j=\lambda_i$ and $\widetilde{\varphi}_j=\widetilde{h}$ for some integer $j\ge 1$.
  \end{enumerate}
  \label{thm-eigen-cmp}
\end{theorem}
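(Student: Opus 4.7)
The plan is to handle the three claims separately. For Claim~1, I would appeal to a Courant-Fischer (min-max) formulation of the eigenvalues, which can be extracted from Theorem~\ref{thm-variational-form-transfer} applied to both $\mathcal{T}$ and $\widetilde{\mathcal{T}}$. The structural observation is that the lift $\widetilde{f}\mapsto \widetilde{f}\circ\xi$ embeds $\widetilde{\mathcal{H}}$ isometrically into $\mathcal{H}$, since $\widetilde{\mu}$ is the pushforward of $\mu$ by $\xi$; this lift sends the constant $1$ to $1$, preserves mean-zero and orthogonality constraints, and preserves the Dirichlet energy by \eqref{energy-and-its-eff}. Hence every $i$-dimensional subspace of $\{1\}^\perp\subset\widetilde{\mathcal{H}}$ lifts to an $i$-dimensional subspace of $\{1\}^\perp\subset\mathcal{H}$ with identical maximal Dirichlet-energy value on its unit sphere. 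Since the $\mathcal{H}$ side minimizes over a strictly larger collection of subspaces, $1-\lambda_i\le 1-\widetilde{\lambda}_i$, giving Claim~1.

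For Claim~2, the plan is a direct bilinear expansion. I would write
\begin{equation*}
  \mathcal{E}(\widetilde{\varphi}_j\circ\xi - \varphi_i) = \mathcal{E}(\widetilde{\varphi}_j\circ\xi) - 2\,\mathcal{E}(\widetilde{\varphi}_j\circ\xi,\varphi_i) + \mathcal{E}(\varphi_i),
\end{equation*}
then substitute $\mathcal{E}(\varphi_i)=1-\lambda_i$, $\mathcal{E}(\widetilde{\varphi}_j\circ\xi)=\widetilde{\mathcal{E}}(\widetilde{\varphi}_j)=1-\widetilde{\lambda}_j$ via \eqref{energy-and-its-eff}, and evaluate the cross term
\begin{equation*}
  \mathcal{E}(\widetilde{\varphi}_j\circ\xi,\varphi_i) = \langle \widetilde{\varphi}_j\circ\xi,(\mathcal{I}-\mathcal{T})\varphi_i\rangle_\mu = (1-\lambda_i)\,\langle \widetilde{\varphi}_j\circ\xi,\varphi_i\rangle_\mu,
\end{equation*}
using \eqref{e-f-g-adjoint}, the self-adjointness of $\mathcal{T}$ in the reversible case, and $\mathcal{T}\varphi_i=\lambda_i\varphi_i$. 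A parallel expansion of $\|\widetilde{\varphi}_j\circ\xi-\varphi_i\|_\mu^2$ uses the isometry $\|\widetilde{\varphi}_j\circ\xi\|_\mu=\|\widetilde{\varphi}_j\|_{\widetilde{\mu}}=1$ together with $\|\varphi_i\|_\mu=1$. The inner-product cross terms in the two expressions cancel exactly after combination, leaving $\lambda_i-\widetilde{\lambda}_j$.

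Claim~3 is the quickest and follows from the first assertion of Lemma~\ref{lemma-tran-tran-eff}. Since $\widetilde{g}\circ\xi = \varphi_i$ is constant on each level set $\Sigma_z$ with value $\widetilde{g}(z)$, and $\mathcal{T}\varphi_i = \lambda_i\varphi_i$, one gets $(\widetilde{\mathcal{T}}\widetilde{g})(z) = \mathbb{E}_{\mu_z}(\mathcal{T}\varphi_i) = \lambda_i\mathbb{E}_{\mu_z}(\widetilde{g}\circ\xi) = \lambda_i\widetilde{g}(z)$, so $\widetilde{g}$ is an eigenfunction of $\widetilde{\mathcal{T}}$ with eigenvalue $\lambda_i$; by completeness of the enumeration \eqref{eigen-seq-eff} this eigenvalue appears as some $\widetilde{\lambda}_j$. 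The main obstacle in the whole argument is really Claim~1: one must promote the aggregated sum identity in Theorem~\ref{thm-variational-form-transfer} to an individual Courant-Fischer statement for each $\lambda_i$. Since $\mathcal{T}$ and $\widetilde{\mathcal{T}}$ are compact self-adjoint on their respective Hilbert spaces (Proposition~\ref{prop-spectrum}, with the obvious analogue on the effective side), this is standard functional-analytic territory; the bookkeeping one must do is to verify that the lift preserves dimensions of subspaces, which is immediate because $\widetilde{\mu}=\xi_\#\mu$ makes $\widetilde{f}\mapsto \widetilde{f}\circ\xi$ an isometric (hence injective) linear map.
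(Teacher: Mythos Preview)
Your proposal is correct and follows essentially the same route as the paper. For Claim~1 the paper also invokes the min-max theorem directly (citing it rather than extracting it from Theorem~\ref{thm-variational-form-transfer}) and uses exactly the isometric lift $\widetilde{f}\mapsto\widetilde{f}\circ\xi$ via Lemma~\ref{lemma-tran-tran-eff}; for Claim~2 the paper performs the same bilinear expansion but organizes it by starting from $1-\widetilde{\lambda}_j=\langle(\mathcal{I}-\mathcal{T})(\widetilde{\varphi}_j\circ\xi),\widetilde{\varphi}_j\circ\xi\rangle_\mu$ and splitting $\widetilde{\varphi}_j\circ\xi=(\widetilde{\varphi}_j\circ\xi-\varphi_i)+\varphi_i$, which is algebraically equivalent to your expansion of $\mathcal{E}(\widetilde{\varphi}_j\circ\xi-\varphi_i)$; and Claim~3 is handled identically via the first part of Lemma~\ref{lemma-tran-tran-eff}.
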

\begin{proof}
  The proof is similar to the proof of~\cite[Proposition~5]{effective_dyn_2017}.
  Concerning the first claim, notice that $1-\lambda_i$ is the $(i+1)$th smallest eigenvalue of $\mathcal{I}-\mathcal{T}$ for $i\ge 1$. 
  Therefore, the min-max theorem~\cite[Theorem 4.14]{teschl2009mathematical} implies 
  \begin{equation}
    1-\lambda_i = \inf_{H_i} \sup_{\psi\in H_i}\Big\{\langle (\mathcal{I}-\mathcal{T})\psi, \psi\rangle_\mu, \langle\psi,\psi\rangle_\mu=1\Big\}\,,
    \label{min-max-1}
  \end{equation}
  where $H_i$ goes over all $(i+1)$-dimensional linear subspace of
  $\mathcal{H}$. In particular, let $\widetilde{H}_i$ be any $(i+1)$-dimensional linear subspace of
  $\widetilde{\mathcal{H}}$ and define $H_i=\{\widetilde{f}\circ\xi\,|\,\widetilde{f}\in \widetilde{H}_i\}$.  Then, it is easy to show that $H_i$ is an
  $(i+1)$-dimensional linear subspace of $\mathcal{H}$. Therefore, for such $H_i$, from \eqref{min-max-1} we obtain
  \begin{equation}
    1-\lambda_i \le \sup_{\psi\in H_i}\Big\{\langle (\mathcal{I}-\mathcal{T})\psi, \psi\rangle_\mu, \langle\psi,\psi\rangle_\mu=1\Big\}\,.
    \label{proof-eigen-bound-1}
  \end{equation}
   Applying Lemma~\ref{lemma-tran-tran-eff}, we have 
  \begin{equation}
    \sup_{\psi\in H_i}\Big\{\langle (\mathcal{I}-\mathcal{T})\psi, \psi\rangle_\mu, \langle\psi,\psi\rangle_\mu=1\Big\}
      =\sup_{\widetilde{\psi}\in \widetilde{H}_i}\Big\{\langle
      (\mathcal{I}-\widetilde{\mathcal{T}})\widetilde{\psi},
      \widetilde{\psi}\rangle_{\widetilde{\mu}},
      \langle\widetilde{\psi},\widetilde{\psi}\rangle_{\widetilde{\mu}}=1\Big\}\,.
    \label{proof-identity-1}
  \end{equation}
  Combining \eqref{proof-eigen-bound-1} and \eqref{proof-identity-1}, and applying the min-max theorem to $\mathcal{I}-\widetilde{\mathcal{T}}$, we get 
  \begin{equation*}
      1-\lambda_i \le \inf_{\widetilde{H}_i} \sup_{\widetilde{\psi}\in
      \widetilde{H}_i}\Big\{\langle
      (\mathcal{I}-\widetilde{\mathcal{T}})\widetilde{\psi},
      \widetilde{\psi}\rangle_{\widetilde{\mu}},
      \langle\widetilde{\psi},\widetilde{\psi}\rangle_{\widetilde{\mu}}=1\Big\} 
      = 1 - \widetilde{\lambda}_i\,,
  \end{equation*}
  which implies the first claim. 

  Concerning the second claim, let us derive, for any $j\ge 1$, 
  \begin{equation}
    \begin{aligned}
      1 - \widetilde{\lambda}_j =& \langle
      (\mathcal{I}-\widetilde{\mathcal{T}})\widetilde{\varphi}_j,\widetilde{\varphi}_j\rangle_{\widetilde{\mu}}\\
      =& \langle (\mathcal{I}-\mathcal{T})(\widetilde{\varphi}_j\circ\xi),\widetilde{\varphi}_j\circ\xi\rangle_\mu\\
      =& \langle (\mathcal{I}-\mathcal{T})(\widetilde{\varphi}_j\circ\xi-\varphi_i+\varphi_i),\widetilde{\varphi}_j\circ\xi-\varphi_i + \varphi_i\rangle_\mu\\
      =& \langle (\mathcal{I}-\mathcal{T})(\widetilde{\varphi}_j\circ\xi-\varphi_i),\widetilde{\varphi}_j\circ\xi-\varphi_i\rangle_\mu
      + \langle (\mathcal{I}-\mathcal{T})\varphi_i,\varphi_i\rangle_\mu \\
      & + 2 \langle (\mathcal{I}-\mathcal{T})\varphi_i, \widetilde{\varphi}_j\circ\xi-\varphi_i \rangle_\mu\\
      =& \mathcal{E}(\widetilde{\varphi}_j\circ\xi-\varphi_i) + 1 - \lambda_i + 2(1-\lambda_i) \langle\varphi_i, \widetilde{\varphi}_j\circ\xi-\varphi_i \rangle_\mu\,,\\
    \end{aligned}
    \label{proof-identity-2}
  \end{equation}
  where we have used the fact that $\widetilde{\varphi}_i$ is normalized to get the first equality, Lemma~\ref{lemma-tran-tran-eff} to obtain the second
  equality, and the fact that $\varphi_i$ is the eigenfunction of $\mathcal{I}-\mathcal{T}$ corresponding to the eigenvalue $1-\lambda_i$ to arrive at the last equality.
  For the last term on the last line of \eqref{proof-identity-2}, we notice that
  \begin{equation}
        2\langle\varphi_i, \widetilde{\varphi}_j\circ\xi-\varphi_i \rangle_\mu = 2\langle\varphi_i, \widetilde{\varphi}_j\circ\xi\rangle_\mu - 2 = -\langle\widetilde{\varphi}_j\circ\xi-\varphi_i, \widetilde{\varphi}_j\circ\xi-\varphi_i \rangle_\mu=-\|\widetilde{\varphi}_j\circ\xi-\varphi_i\|_\mu^2\,,
    \label{proof-identity-3}
  \end{equation}
  which follows from the fact that both eigenfunctions $\varphi_i$ and
  $\widetilde{\varphi}_j$ are normalized, and the equality
  $\|\widetilde{\varphi}_j\circ\xi\|_\mu =
  \|\widetilde{\varphi}_j\|_{\widetilde{\mu}}=1$ as implied by
  Lemma~\ref{lemma-tran-tran-eff}. The identity in the second claim is obtained by combining and reorganizing \eqref{proof-identity-2}--\eqref{proof-identity-3}.

  Concerning the third claim, using the identity in the first claim of Lemma~\ref{lemma-tran-tran-eff} we obtain 
  $(\widetilde{\mathcal{T}}\widetilde{h})(z) = \mathbb{E}_{\mu_z} (\mathcal{T}(\widetilde{h}\circ \xi)) = \mathbb{E}_{\mu_z} (\mathcal{T}\varphi_i)
=\lambda_i \mathbb{E}_{\mu_z} (\varphi_i) =\lambda_i \widetilde{h}(z)$, from which we can conclude.
\end{proof}

Let us discuss the implications of Theorem~\ref{thm-eigen-cmp}. The first
claim of Theorem~\ref{thm-eigen-cmp} implies that the eigenvalues of $Z_n$ are
smaller than (or equal to) the corresponding eigenvalues of $X_n$. In other
words, the convergence of the effective dynamics to equilibrium is faster than
(or equal to) the convergence of the original process (see
\eqref{eqn-evolution-f-by-eigenfunc}).  The second claim of
Theorem~\ref{thm-eigen-cmp} implies that the true eigenvalue $\lambda_i$ (or
equivalently, timescale) of $X_n$ can be well approximated by an eigenvalue of
$Z_n$, if the corresponding eigenfunction $\varphi_i$ is approximately a
function of the CV map $\xi$. In particular, if a component of $\xi$ is capable of representing the eigenfunction $\varphi_i$, e.g.\
$\varphi_i=\xi_1$, then the third claim of Theorem~\ref{thm-eigen-cmp} implies
that the corresponding eigenvalue $\lambda_i$ is preserved by the effective
dynamics. Recall that, in data-driven numerical approaches for timescales estimations, employing system's features corresponds to estimating
timescales of effective dynamics  (see Remark~\ref{vamp-r-eff}). Therefore,
Theorem~\ref{thm-eigen-cmp} suggests that, in order to obtain accurate timescale estimations,
features should be selected such that they can parametrize the corresponding eigenfunctions.

We conclude this section with a theorem on a numerical approach for learning the CV map $\xi$ by optimizing timescale estimations.

\begin{theorem}[Optimizing CV maps by timescale estimations]\label{thm:opt_timescales}
  The variational formulation \eqref{variational-transfer-all-first-m}--\eqref{f-constraints} in Theorem~\ref{thm-variational-form-transfer}
can be used to learn the CV map $\xi$, by optimizing
the objective in \eqref{variational-transfer-all-first-m} with functions $f_1,
  \dots, f_m$, which are parametrized by a common CV map $\xi$,
  i.e.\ $f_i=\widetilde{f}_i\circ\xi$ for some function $\widetilde{f}_i: \mathbb{R}^k\rightarrow \mathbb{R}$, where $1\le i \le m$.
  In this case, we have 
  \begin{equation}
    \min_{\substack{f_i=\widetilde{f}_i\circ\xi, \xi\in \Xi\\ \widetilde{f}_i\in
    \widetilde{\mathcal{H}}, 1\le i\le m}} \sum_{i=1}^m \omega_i \mathcal{E}(f_i) = \min_{\xi\in \Xi}
    \min_{\widetilde{f}_1,\dots, \widetilde{f}_m\in \widetilde{\mathcal{H}}} \sum_{i=1}^m \omega_i \mathcal{E}(\widetilde{f}_i\circ\xi) =
    \min_{\xi\in \Xi} \sum_{i=1}^m \omega_i (1 - \widetilde{\lambda}_i),
    \label{opt_timescales-eqn1}
  \end{equation}
  where the functions $\widetilde{f}_i$ satisfy the constraints \eqref{f-constraints-eff}.
\end{theorem}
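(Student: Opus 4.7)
The plan is to verify the two equalities in \eqref{opt_timescales-eqn1} separately. The rightmost equality reduces the inner minimization problem, for any fixed $\xi$, to the variational formula of Theorem \ref{thm-variational-form-transfer} applied to the effective dynamics. Concretely, by \eqref{energy-and-its-eff} we have $\mathcal{E}(\widetilde{f}_i\circ\xi)=\widetilde{\mathcal{E}}(\widetilde{f}_i)$, and a change of variables via \eqref{identity-delta} and \eqref{mu-eff} identifies $\langle \widetilde{f}\circ\xi,\widetilde{g}\circ\xi\rangle_\mu$ with $\langle \widetilde{f},\widetilde{g}\rangle_{\widetilde{\mu}}$, so that the constraints \eqref{f-constraints} on the tuple $(\widetilde{f}_i\circ\xi)$ translate exactly into the constraints \eqref{f-constraints-eff} on $(\widetilde{f}_i)$. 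Since $\widetilde{\mathcal{T}}$ is self-adjoint on $\widetilde{\mathcal{H}}$ by Corollary \ref{corollary-reversibility-eff}, applying Theorem \ref{thm-variational-form-transfer} to $Z_n$ then yields $\min_{\widetilde{f}_i}\sum_i \omega_i\widetilde{\mathcal{E}}(\widetilde{f}_i) = \sum_i \omega_i(1-\widetilde{\lambda}_i)$, which is the right-hand equality after minimizing over $\xi$.

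For the leftmost equality, I prove two inequalities. The direction ``$\le$'' is immediate: any admissible tuple of the form $f_i=\widetilde{f}_i\circ\xi$ is admissible in the unrestricted problem (its components lie in $\mathcal{H}$ and satisfy \eqref{f-constraints} by the change-of-variables observation above), so restricting the feasible set cannot decrease the infimum. For ``$\ge$'', I construct an explicit CV map that saturates the unrestricted minimum. Take $k=m$ (admissible whenever $m<d$) and set $\xi:=(\varphi_1,\dots,\varphi_m)^\top$, where $(\varphi_i)_{i\ge 1}$ are the leading normalized eigenfunctions of $\mathcal{T}$. Choosing $\widetilde{f}_i(z)=z_i$ yields $\widetilde{f}_i\circ\xi=\varphi_i$, and these satisfy \eqref{f-constraints} since $\{\varphi_i\}_{i\ge 1}$ is orthonormal and orthogonal to $\varphi_0\equiv 1$. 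Theorem \ref{thm-variational-form-transfer} then identifies the value attained by this particular $(\xi,\widetilde{f}_1,\dots,\widetilde{f}_m)$ with $\sum_i\omega_i(1-\lambda_i)$, which equals the unrestricted minimum.

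The main technical obstacle is the construction $\xi=(\varphi_1,\dots,\varphi_m)^\top$, since it presupposes that the leading eigenfunctions satisfy the regularity conditions implicit in Section \ref{subsec-formal-defs} (in particular $C^2$ smoothness and the surjectivity/submersion hypothesis required for each level set $\Sigma_z$ to be a genuine $(d-m)$-dimensional submanifold, so that the conditional measures $\mu_z$ and hence $\widetilde{p}$ are well defined). For transfer operators associated with reversible SDEs this typically follows from elliptic regularity of the generator and non-degeneracy of the spectral projection onto the leading modes, but the statement is tacit about these prerequisites; a fully rigorous version either assumes such regularity of $\varphi_1,\dots,\varphi_m$ directly or broadens the admissible class of CV maps to accommodate them.
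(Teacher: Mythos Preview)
Your argument for the rightmost equality is exactly the paper's: use \eqref{energy-and-its-eff} to rewrite $\mathcal{E}(\widetilde{f}_i\circ\xi)=\widetilde{\mathcal{E}}(\widetilde{f}_i)$, observe that the constraints \eqref{f-constraints} on $f_i=\widetilde{f}_i\circ\xi$ translate into \eqref{f-constraints-eff} on $\widetilde{f}_i$ (the paper invokes Lemma~\ref{lemma-tran-tran-eff} for this, your change-of-variables via \eqref{identity-delta} is the same thing), and then apply Theorem~\ref{thm-variational-form-transfer} to the effective dynamics $Z_n$.

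Where you diverge is the leftmost equality. The paper reads the leftmost $\min_{f_1,\dots,f_m}$ as already restricted to functions of the form $f_i=\widetilde{f}_i\circ\xi$ (this is what the phrase ``in this case'' after the parametrization hypothesis is meant to convey), so the first equality is just a tautological rewriting of a joint minimum as an iterated one, and the proof says nothing further about it. You instead read the leftmost expression as the \emph{unrestricted} minimum of Theorem~\ref{thm-variational-form-transfer}, i.e.\ $\sum_i\omega_i(1-\lambda_i)$, and prove the stronger statement that the restricted optimum achieves it by exhibiting $\xi=(\varphi_1,\dots,\varphi_m)^\top$. Your argument for this is correct and is precisely what gives Remark~\ref{rmk-opt-xi-timescales} its content; the regularity caveat you raise (that $\varphi_1,\dots,\varphi_m$ need to satisfy the $C^2$/submersion hypotheses of Section~\ref{subsec-formal-defs}) is genuine, but the paper tacitly operates under such assumptions throughout and does not address them either.
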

\begin{proof}
The constraints \eqref{f-constraints-eff} on $\widetilde{f}_i$ follow from the constraints in \eqref{f-constraints} and Lemma~\ref{lemma-tran-tran-eff}.
To prove the second equality in \eqref{opt_timescales-eqn1}, we derive 
  \begin{equation*}
    \min_{\xi\in \Xi} \min_{\widetilde{f}_1,\dots, \widetilde{f}_m\in \widetilde{\mathcal{H}}} \sum_{i=1}^m \omega_i \mathcal{E}(\widetilde{f}_i\circ\xi) =
    \min_{\xi\in \Xi}\min_{\widetilde{f}_1,\dots, \widetilde{f}_m\in \widetilde{\mathcal{H}}} \sum_{i=1}^m \omega_i \widetilde{\mathcal{E}}(\widetilde{f}_i) = \min_{\xi\in \Xi} \sum_{i=1}^m \omega_i (1 - \widetilde{\lambda}_i),
  \end{equation*}
where the first equality follows from \eqref{energy-and-its-eff} and the second equality follows from the variational formulation in
  Theorem~\ref{thm-variational-form-transfer} when applied to the eigenfunctions of the effective dynamics.
\end{proof}
  
  \begin{remark}[Interpretation and algorithmic realization]
    The rightmost expression in \eqref{opt_timescales-eqn1} shows that optimizing functions $f_1,\dots, f_m$ of the form
  $f_i=\widetilde{f}_i\circ \xi$ for $1\le i \le m$ by minimizing 
  the objective in \eqref{variational-transfer-all-first-m} (under the
  constraints \eqref{f-constraints}) is equivalent to optimizing the CV map $\xi$ such that the quantity $\sum_{i=1}^m \omega_i
  (1-\lambda_i)$ associated to $X_n$ is best approximated by the quantity $\sum_{i=1}^m
  \omega_i (1-\widetilde{\lambda}_i)$ associated to the effective dynamics
  built using the map $\xi$. We refer to \cite[Section 2.7]{lelievre2023analyzing} for a numerical algorithm that learns an encoder (i.e.\ the map $\xi$) using a loss function that combines reconstruction error and the objective in \eqref{variational-transfer-all-first-m}.
  \label{rmk-opt-xi-timescales}
\end{remark}

\subsection{Transition rates}
\label{subsec-error-rates}

Next, we compare the transition rates of $Z_n$ to the corresponding transition rates of $X_n$.
To begin with, recall that in TPT the transition rate $k_{AB}$ of $X_n$ given two disjoint closed subsets $A,B\subset \mathbb{R}^d$ is defined in \eqref{k-ab-def} in Section~\ref{subsec-tpt}. To make a connection to the effective dynamics, let us furthermore assume that these two subsets are defined using the CV map $\xi$, i.e.\ there are two disjoint subsets $\widetilde{A}, \widetilde{B}
\subset \mathbb{R}^k$, such that~(see Figure~\ref{fig-tpt} for illustration)
\begin{equation}
A=\xi^{-1}(\widetilde{A}), \quad B=\xi^{-1}(\widetilde{B}), \quad
  (\widetilde{A}\cup \widetilde{B})^c\neq \emptyset\,. 
  \label{set-ab-and-eff}
\end{equation}
Then, in analogy to \eqref{k-ab-def}, we define the transition rate of $Z_n$ from $\widetilde{A}$ to $\widetilde{B}$ as 
\begin{equation}
  \widetilde{k}_{\widetilde{A}\widetilde{B}} = \lim_{N\rightarrow +\infty} \frac{\widetilde{M}_N^R}{N}\,,
  \label{k-ab-def-eff}
\end{equation}
by counting the number of reactive segments $\widetilde{M}_N^R$ in the
trajectory of $Z_n$ within the first $N$ steps and taking the limit $N\rightarrow +\infty$.

\begin{figure}[t!]
  \centering
  \includegraphics[width=0.7\textwidth]{./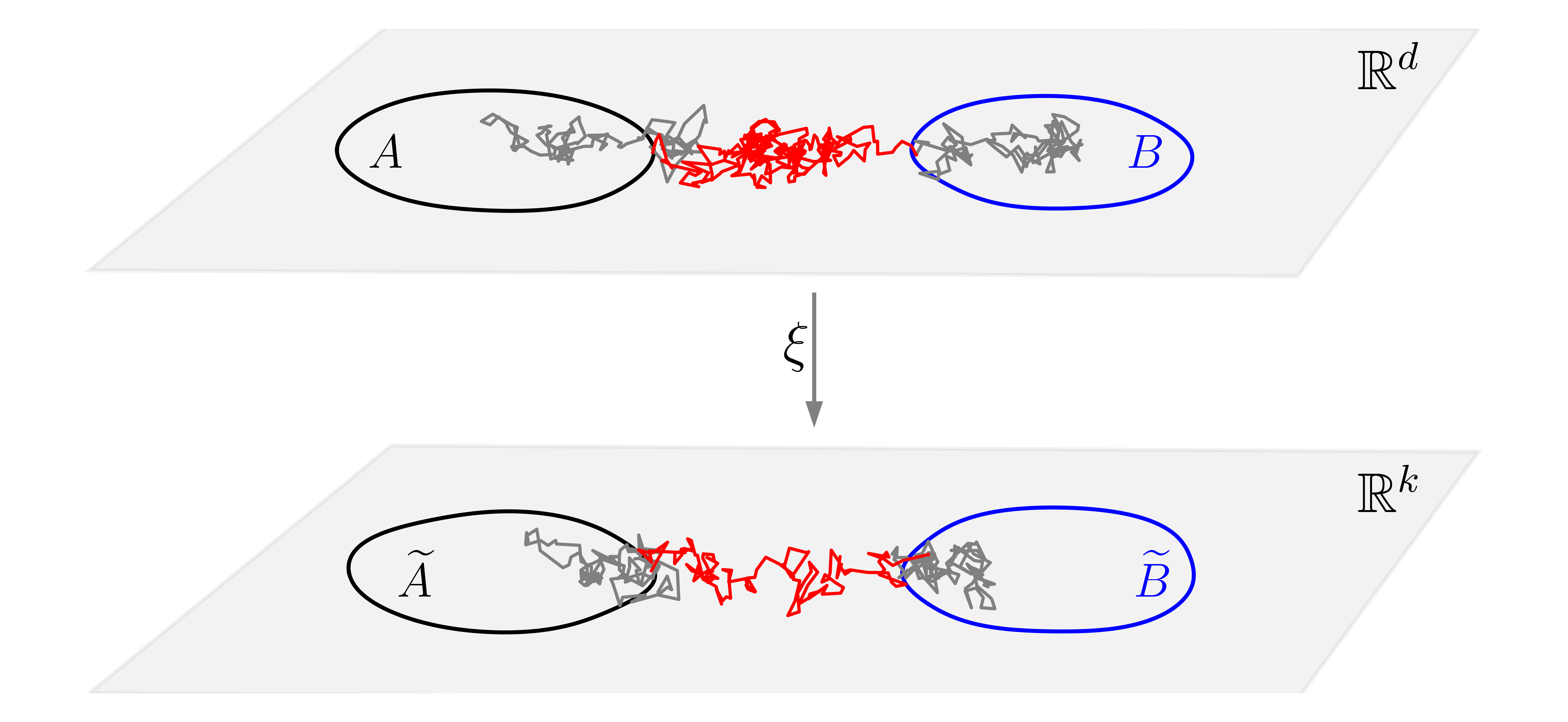}
  \caption{Top: illustration of transitions of the process $X_n$ from set $A$ to set $B$ in $\mathbb{R}^d$. Bottom: illustration of
  transitions of the system $Z_n$ from the corresponding set $\widetilde{A}$
  to set $\widetilde{B}$ in $\mathbb{R}^k$ (see \eqref{set-ab-and-eff} for the relations between the sets). The reactive segments are highlighted in red. 
  \label{fig-tpt}}
\end{figure}

In analogy to the third claim of Proposition~\ref{prop-kab}, for the effective dynamics we have  
\begin{equation}
  \widetilde{k}_{\widetilde{A}\widetilde{B}} =
  \widetilde{\mathcal{E}}(\widetilde{q}) = \frac{1}{2}\int_{\mathbb{R}^k}
  \left(\int_{\mathbb{R}^k} \widetilde{p}(z,w) (\widetilde{q}(w) -
  \widetilde{q}(z))^2 \dd{w}\right) \widetilde{\mu}(\dd{z})\,,
  \label{kab-and-energy-eff}
\end{equation}
where $\widetilde{\mathcal{E}}$ is the Dirichlet energy in \eqref{energy-eff} and
$\widetilde{q}$ is the (forward) committor associated to the transitions of $Z_n$ from $\widetilde{A}$ to $\widetilde{B}$, satisfying the equation
\begin{subequations}
  \begin{align}
    & (\widetilde{\mathcal{T}}\widetilde{q})(z) = \widetilde{q}(z), \quad \forall~ z \in (\widetilde{A}\cup \widetilde{B})^c\,, \label{eqn-committor-eff-1} \\
    & \widetilde{q}\,|_{\widetilde{A}} \equiv 0, \quad \widetilde{q}\,|_{\widetilde{B}} \equiv 1\,.\label{eqn-committor-eff-2}
  \end{align}
\end{subequations}

We have the following result which compares the transition rate $k_{AB}$ of
the original process $X_n$ to the transition rate $\widetilde{k}_{\widetilde{A}\widetilde{B}}$ of the effective dynamics $Z_n$.
\begin{theorem}\label{thm:rates_CV}
  Assume that the sets $A, B \subset \mathbb{R}^d$ are related to the sets
  $\widetilde{A}, \widetilde{B} \subset \mathbb{R}^k$ through the CV map $\xi$ by \eqref{set-ab-and-eff}.
  Let $q$ and $k_{AB}$ denote the (forward) committor and the transition rate of $X_n$ from $A$ to $B$, respectively. Also, let $\widetilde{q}$ and
  $\widetilde{k}_{\widetilde{A}\widetilde{B}}$ be the committor and the
  transition rate of $Z_n$ from $\widetilde{A}$ to $\widetilde{B}$, respectively.
  Then, we have 
  \begin{equation}
    \widetilde{k}_{\widetilde{A}\widetilde{B}} = k_{AB} + \mathcal{E}(q-\widetilde{q}\circ\xi)\,,
    \label{kab-and-eff}
  \end{equation}
  where $\mathcal{E}$ is the Dirichlet energy in \eqref{energy} associated to $X_n$.
  In particular, if there is a function $\widetilde{f}:\mathbb{R}^k\rightarrow \mathbb{R}$ such that $q=\widetilde{f}\circ \xi$,
  then $\widetilde{f}=\widetilde{q}$ and $\widetilde{k}_{\widetilde{A}\widetilde{B}}=k_{AB}$.
  \label{thm-compare-kab-and-eff}
\end{theorem}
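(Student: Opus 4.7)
The plan is to derive \eqref{kab-and-eff} by applying Proposition~\ref{prop-committor} to the ``lifted'' trial function $\widetilde{q}\circ \xi$, and then converting between the Dirichlet energies on $\mathbb{R}^d$ and $\mathbb{R}^k$ using \eqref{energy-and-its-eff}. Concretely, I would first observe that the set identity \eqref{set-ab-and-eff} implies $(\widetilde{q}\circ \xi)|_A \equiv 0$ and $(\widetilde{q}\circ \xi)|_B \equiv 1$, so $\widetilde{q}\circ \xi$ satisfies the committor boundary condition \eqref{bc-committor}. Consequently, the orthogonality-type decomposition \eqref{energy-f-q} in Proposition~\ref{prop-committor} applied to $f = \widetilde{q}\circ \xi$ gives
\[
\mathcal{E}(\widetilde{q}\circ \xi) \;=\; k_{AB} \,+\, \mathcal{E}(\widetilde{q}\circ \xi - q) \;=\; k_{AB} \,+\, \mathcal{E}(q - \widetilde{q}\circ \xi),
\]
the last equality using that $\mathcal{E}$ is a quadratic form.

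Next, I would rewrite the left-hand side using \eqref{energy-and-its-eff}, which gives $\mathcal{E}(\widetilde{q}\circ \xi) = \widetilde{\mathcal{E}}(\widetilde{q})$, and then appeal to the effective-dynamics analogue of Proposition~\ref{prop-kab} stated as \eqref{kab-and-energy-eff}, which reads $\widetilde{\mathcal{E}}(\widetilde{q}) = \widetilde{k}_{\widetilde{A}\widetilde{B}}$. Chaining these three identities yields \eqref{kab-and-eff} at once.

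For the second assertion, assuming $q=\widetilde{f}\circ\xi$, I would show that $\widetilde{f}$ itself solves the committor equation \eqref{eqn-committor-eff-1}--\eqref{eqn-committor-eff-2} for the effective process $Z_n$. Using Lemma~\ref{lemma-tran-tran-eff}(1) together with the set-theoretic identity $\xi^{-1}\big((\widetilde{A}\cup\widetilde{B})^c\big) = (A\cup B)^c$, for any $z\in (\widetilde{A}\cup\widetilde{B})^c$ one computes
\[
(\widetilde{\mathcal{T}}\widetilde{f})(z) \;=\; \mathbb{E}_{\mu_z}\!\big(\mathcal{T}(\widetilde{f}\circ\xi)\big) \;=\; \mathbb{E}_{\mu_z}(\mathcal{T}q) \;=\; \mathbb{E}_{\mu_z}(q) \;=\; \widetilde{f}(z),
\]
where the third equality uses $\mathcal{T}q = q$ on $(A\cup B)^c$ from \eqref{committor-equation} together with $\Sigma_z \subset (A\cup B)^c$, and the fourth that $q \equiv \widetilde{f}(z)$ on $\Sigma_z$. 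The boundary conditions $\widetilde{f}|_{\widetilde{A}} \equiv 0$ and $\widetilde{f}|_{\widetilde{B}} \equiv 1$ are immediate from the corresponding conditions on $q$. By uniqueness of the committor, $\widetilde{f}=\widetilde{q}$, and so the error term in \eqref{kab-and-eff} vanishes, giving $\widetilde{k}_{\widetilde{A}\widetilde{B}}=k_{AB}$.

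The only step that requires genuine care is applying Proposition~\ref{prop-committor} to the lifted trial function $\widetilde{q}\circ\xi$; everything else is a direct algebraic combination of identities already proved in the paper. The argument does rely on the boundary condition for $\widetilde{q}\circ\xi$ matching that of the committor $q$ exactly, which is precisely what the assumption \eqref{set-ab-and-eff} buys us, and on the ``interior'' relation $\mathcal{T}q=q$ transferring cleanly through the conditional expectation $\mathbb{E}_{\mu_z}$ via Lemma~\ref{lemma-tran-tran-eff}.
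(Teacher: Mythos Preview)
Your derivation of \eqref{kab-and-eff} is essentially identical to the paper's: lift $\widetilde{q}$ to $\widetilde{q}\circ\xi$, check the boundary condition \eqref{bc-committor} via \eqref{set-ab-and-eff} and \eqref{eqn-committor-eff-2}, apply the decomposition \eqref{energy-f-q} from Proposition~\ref{prop-committor}, and convert the left-hand side using \eqref{energy-and-its-eff} and \eqref{kab-and-energy-eff}.

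For the second assertion the two arguments diverge. You verify directly, via Lemma~\ref{lemma-tran-tran-eff}(1) and the interior identity $\mathcal{T}q=q$ on $(A\cup B)^c$, that $\widetilde{f}$ solves the effective committor equation \eqref{eqn-committor-eff-1}--\eqref{eqn-committor-eff-2}, and then invoke uniqueness of that solution. The paper instead applies the variational characterization of Proposition~\ref{prop-committor} to $Z_n$ to get the inequality $\widetilde{k}_{\widetilde{A}\widetilde{B}} = \widetilde{\mathcal{E}}(\widetilde{q}) \le \widetilde{\mathcal{E}}(\widetilde{f}) = \mathcal{E}(\widetilde{f}\circ\xi) = \mathcal{E}(q) = k_{AB}$; combining this with \eqref{kab-and-eff} forces $\mathcal{E}(q-\widetilde{q}\circ\xi)=0$, hence $q=\widetilde{q}\circ\xi$ by positivity of $p$ and $\pi$, and then $\widetilde{f}=\widetilde{q}$ by surjectivity of $\xi$. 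Your route is more direct and avoids the auxiliary inequality, but it relies on uniqueness of solutions to the committor equation, which the paper never states explicitly (it does follow easily: if both $\widetilde{f}$ and $\widetilde{q}$ solve \eqref{eqn-committor-eff-1}--\eqref{eqn-committor-eff-2}, the same orthogonality computation gives $\widetilde{\mathcal{E}}(\widetilde{f}-\widetilde{q})=0$, whence $\widetilde{f}=\widetilde{q}$ from positivity of $\widetilde{p}$). The paper's route stays entirely within results already established.
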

\begin{proof}
We prove the claims by applying Proposition~\ref{prop-committor}. Consider the
  function $f:\mathbb{R}^d\rightarrow \mathbb{R}$ defined by
  $f=\widetilde{q}\circ\xi$. The conditions in \eqref{eqn-committor-eff-2} and
  the relation \eqref{set-ab-and-eff} imply that $f\in \mathcal{F}_{AB}$ (see \eqref{f-ab}).
  The equation \eqref{kab-and-eff} follows from the identity \eqref{energy-f-q} in Proposition~\ref{prop-committor} and the fact that
  $\mathcal{E}(f)=\mathcal{E}(\widetilde{q}\circ\xi) = \widetilde{\mathcal{E}}(\widetilde{q})
  = \widetilde{k}_{\widetilde{A}\widetilde{B}}$, which in turn is implied by \eqref{energy-and-its-eff} and \eqref{kab-and-energy-eff}.

  Concerning the second claim, we notice that the same conclusion of
  Proposition~\ref{prop-committor} holds for $Z_n$, i.e.\ $\widetilde{q}$
  minimizes the Dirichlet energy $\widetilde{\mathcal{E}}(\widetilde{f})$
  among all $\widetilde{f}:\mathbb{R}^k\rightarrow \mathbb{R}$ such that
  $\widetilde{f}|_{\widetilde{A}} \equiv 0$ and
  $\widetilde{f}|_{\widetilde{B}} \equiv 1$. Using this fact and \eqref{energy-and-its-eff}, we can derive
 \begin{equation*}
   \widetilde{k}_{\widetilde{A}\widetilde{B}} =
   \widetilde{\mathcal{E}}(\widetilde{q}) \le
   \widetilde{\mathcal{E}}(\widetilde{f}) = \mathcal{E}(\widetilde{f}\circ\xi)
   = \mathcal{E}(q) = k_{AB}\,,
 \end{equation*}
  which, together with \eqref{kab-and-eff} and the definition of $\mathcal{E}$ in \eqref{energy}, implies that
  $\widetilde{k}_{\widetilde{A}\widetilde{B}} =k_{AB}$ and $\widetilde{f}\circ
  \xi = q=\widetilde{q}\circ \xi$. The proof is therefore completed (since $\xi$ is a onto map).
\end{proof}

Theorem~\ref{thm-compare-kab-and-eff} states that the transition rate $k_{AB}$ of $X_n$ can be well approximated by the
corresponding transition rate $\widetilde{k}_{\widetilde{A}\widetilde{B}}$ of
$Z_n$, if the committor $q$ is approximately a function of the CV map $\xi$.
In particular, the rate $k_{AB}$ is preserved by the effective dynamics if the CV map $\xi$ is capable of parametrizing the committor $q$.

\subsection{Comparison under transformations}
\label{subsec-transformation}

Finally, we compare timescales and transition rates for effective dynamics associated to two CV maps. Besides the CV map $\xi:\mathbb{R}^d\rightarrow\mathbb{R}^k$ and the corresponding effective dynamics $Z_n$, let us consider another CV map $\xi':\mathbb{R}^d\rightarrow \mathbb{R}^{k'}$, which is related to $\xi$ by
$\xi'=f\circ \xi$, where $f: \mathbb{R}^k\rightarrow \mathbb{R}^{k'}$ and $1 \le k' \le k < d$. 
This may correspond to the case discussed in Remark~\ref{vamp-r-eff} where a CV map $\xi'$ is built using system's features (described by $\xi$). 

Denote $Z_n'$ the effective dynamics of $X_n$ associated to $\xi'$, which is defined by the transition density (see~\eqref{p-z} in Definition~\ref{def-effdyn})
  \begin{equation}
    \widetilde{p}\,'(z', w') 
= \int_{\Sigma_{z'}}\Big[ \int_{\Sigma_{w'}}
    p(x, y) [\mathrm{det}(\nabla\xi'(y)^\top\nabla\xi'(y))]^{-\frac{1}{2}}
    \sigma_{\Sigma_{w'}}(\dd{y}) \Big]\,\mu_{z'}(\dd{x})\,, 
    \label{transition-density-p-prime}
  \end{equation}
where $z',w'\in \mathbb{R}^{k'}$.
Since we assume $X_n$ to be reversible,
Corollary~\ref{corollary-reversibility-eff} implies that $Z_n'$ is reversible 
with respect to its invariant probability measure
$\widetilde{\mu}'(\dd{z'})=\widetilde{\pi}'(z')\dd{z'}$, where $\widetilde{\pi}'(z') = 
\int_{\Sigma_{z'}} \pi(x)
[\mathrm{det}(\nabla\xi'(x)^\top\nabla\xi'(x))]^{-\frac{1}{2}}
\,\sigma_{\Sigma_{z'}}(\dd{x})$ (see~\eqref{q-eff}).
Let $\widetilde{\mathcal{T}}'$ be the corresponding transfer operator of $Z_n'$,
with (nonnegative) eigenvalues $1=\widetilde{\lambda}'_0 > \widetilde{\lambda}'_1 \ge \widetilde{\lambda}'_2 \ge \dots$, 
and eigenfunctions $(\widetilde{\varphi}'_i)_{i\ge 0}$, which are normalized with respect to the norm $\|\cdot\|_{\widetilde{\mu}'}$.  
Given the subsets $\widetilde{A},\widetilde{B}\subset \mathbb{R}^k$ that were considered in the previous section (see~\eqref{set-ab-and-eff}), we assume that 
there are two subsets $\widetilde{A}', \widetilde{B}' \subset \mathbb{R}^{k'}$, such that $\widetilde{A}=f^{-1}(\widetilde{A}')$,
$\widetilde{B}=f^{-1}(\widetilde{B}')$ and $(\widetilde{A}' \cup \widetilde{B}')^c\neq\emptyset$.
Let $\widetilde{k}'_{\widetilde{A}'\widetilde{B}'}$ denote the transition rate
of $Z_n'$ from the set $\widetilde{A}'$ to the set $\widetilde{B}'$.

The following result elucidates the relation between the effective dynamics $Z_n$ and $Z_n'$ (see Figure~\ref{fig-effdyn}), and also the relations between their timescales and transition rates. 
\begin{proposition}
  Assume that $1\le k'< k$. The following claims are true.
  \begin{enumerate}
    \item
      $Z_n'$ is the effective dynamics of $Z_n$ associated to the CV map $f$. 
\item
  $\widetilde{\lambda}_i' \le \widetilde{\lambda}_i$, for $i \ge 1$.
\item
   $\widetilde{k}'_{\widetilde{A}'\widetilde{B}'} \ge \widetilde{k}_{\widetilde{A}\widetilde{B}}$.
  \end{enumerate}
\end{proposition}
\label{thm-compare-two-cvs}
\begin{proof}
  Concerning the first claim, for two test functions $\widetilde{h}'_1, \widetilde{h}'_2: \mathbb{R}^{k'}\rightarrow \mathbb{R}$,
  using \eqref{transition-density-p-prime} and the formula \eqref{co-area} we can derive
  \begin{equation}
  \begin{aligned}
    & \int_{\mathbb{R}^{k'}}\left(\int_{\mathbb{R}^{k'}} \widetilde{p}\,'(z',
    w')  \widetilde{h}'_1 (w') \dd{w'}\right) \widetilde{h}'_2(z') \widetilde{\mu}'(\dd{z'}) \\
      = & \int_{\mathbb{R}^d}\left(\int_{\mathbb{R}^d} p(x,y)
      \widetilde{h}_1'(\xi'(y)) \dd{y}\right) \widetilde{h}_2'(\xi'(x)) \mu(x)\\
      = & \int_{\mathbb{R}^d}\left(\int_{\mathbb{R}^d} p(x,y)
      \widetilde{h}_1'(f(\xi(y))) \dd{y}\right) \widetilde{h}_2'(f(\xi(x))) \mu(x)\\
    =& \int_{\mathbb{R}^{k}}\left(\int_{\mathbb{R}^{k}} 
    \widetilde{p}(z, w) \widetilde{h}'_1(f(w)) \dd{w}\right) \widetilde{h}'_2 (f(z)) \widetilde{\mu}(\dd{z})\,.
    \end{aligned}
    \label{p-xi-prime-xi}
  \end{equation}
  In particular, taking $\widetilde{h}'_1 \equiv 1$, the above identities simplify to 
  \begin{equation}
    \int_{\mathbb{R}^{k'}} \widetilde{h}'_2 (z') \widetilde{\mu}'(\dd{z'})
    =\int_{\mathbb{R}^{k}}  \widetilde{h}'_2 (f(z)) \widetilde{\mu}(\dd{z})\,,
    \label{q-xi-prime-xi}
  \end{equation}
  which implies that $\widetilde{\mu}'$ is the pushforward measure of $\widetilde{\mu}$ under $f$.
  Notice that $\widetilde{p}$ and $\widetilde{\mu}$ are the transition density
  and the invariant measure of $Z_n$ respectively. Using the definition of
  effective dynamics (see \eqref{q-eff} and Definition~\ref{def-effdyn}) and
  rewriting the last expression in \eqref{p-xi-prime-xi} using co-area
  formula~\cite[Lemma~3.2]{lelievre2010free}, we conclude that $Z_n'$ is the effective dynamics of $Z_n$ associated to the CV map $f$.
  
  The inequalities in the second and the third claims follow by applying the
  first claim, Theorem~\ref{thm-eigen-cmp} and Theorem~\ref{thm-compare-kab-and-eff} to the dynamics $Z_n$ with the CV map $f$.
\end{proof}

\begin{figure}[t!]
  \centering
  \includegraphics[width=0.3\textwidth]{./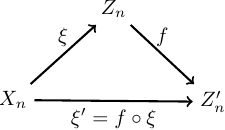}
  \caption{Commutative diagram of effective dynamics. The effective dynamics
  $Z_n'$ of $X_n$ associated to the CV map $\xi'$ is the effective dynamics of
  (the effective dynamics) $Z_n$ associated to the CV map $f$. \label{fig-effdyn}}
\end{figure}

\section{Conclusion and Discussions}
\label{sec-discuss}

One of the ultimate goals in complex system science is to find a faithful
low-dimensional representation of the system's dynamics. The effective
dynamics given by (a set of) collective variables as defined here offers such
a low-dimensional representation.   In this paper, we have studied this
effective dynamics for discrete-in-time Markov processes and presented a
detailed study on its properties, including invariances and approximation
quality with respect to main timescales, transition rates, and KL-distance to the full dynamics. The results characterize which CVs are optimal regarding the respective minimization of these errors.

There are manifold connections between the work presented herein and similar
approaches proposed in the literature. For example, the effective dynamics of
diffusion processes described by SDEs have been studied in several works and
different types of error estimates were obtained, e.g.,
in~\cite{LEGOLL2017-pathwise,effective_dyn_2017,LegollLelievreSharma18,DLPSS18,LelievreZhang18}.
The error estimates in Section~\ref{sec-error-estimates} of this work are closely related to the estimates in~\cite{effective_dyn_2017}. Beyond this, the quality of effective dynamics in approximating the original process can also be studied by considering the relative entropy or pathwise error between the effective dynamics and the original process, as is done, e.g., in~\cite{LEGOLL2017-pathwise,LegollLelievreSharma18,DLPSS18,LelievreZhang18} for diffusion processes.
However, to our best knowledge, there are no similar results on the optimal
characterization for effective dynamics of diffusion processes in terms of relative entropy.

While the main results of this work are theoretical, we discussed the
relevance of effective dynamics in data-driven numerical approaches, such as
in VAMPnets~\cite{vampnet,state-free-vampnets}, deep learning of transition manifolds \cite{optimal-rc-bittracher} and existing algorithms for learning CV maps~\cite{hao-rc-flow,lelievre2023analyzing}. 
The approach taken herein, however, may open the horizon for algorithmic approaches for learning ``good" CVs and the associated effective dynamics \emph{simultaneously} where the already learned effective dynamics might also be utilized for exploring state space,  a key step that is still the main bottleneck of available algorithmic approaches. In future work, this approach will be further studied and applied to concrete systems in MD applications.

\section*{Acknowledgments}
\hspace{0.1cm} CS is funded by the Deutsche Forschungsgemeinschaft (DFG, German Research Foundation) under Germany's Excellence Strategy MATH+: The Berlin Mathematics Research Centre (EXC-2046/1, project No.\,390685689) and CRC 1114 ``Scaling Cascades in Complex Systems'' (project No.\,235221301). WZ is funded by DFG's
Eigene Stelle (project No.\,524086759).
WZ thanks Tony Leli\`evre and Gabriel Stolz for inspiring discussions on topics related to collective variables. 

\appendix 

\section{Transfer operator for Langevin dynamics}
\label{app-sec-langevin}
In this section, we derive the expression of transfer operator for the
evolution of the positional variables of Langevin dynamics (see Example~\ref{example-concrete} in Section~\ref{subsec-transfer-definition}). 

The transition density $p^{\mathrm{Lan}}$ of the Langevin dynamics~\eqref{sde-langevin} satisfies the Fokker-Planck equation 
\begin{align}
  \begin{split}
    \frac{\dd{}}{\dd{t}} p^{\mathrm{Lan}}(t,y,v'|x,v) =& -
    \mathrm{div}_{y}(v'\,p^{\mathrm{Lan}}) + \mathrm{div}_{v'}(\nabla V(y)
    p^{\mathrm{Lan}}) \\
    & + \gamma \mathrm{div}_{v'}(v'\,p^{\mathrm{Lan}}) + \frac{\gamma}{\beta} \Delta_{v'} p^{\mathrm{Lan}}, \quad t > 0\\
    p^{\mathrm{Lan}}(0,y,v'|x,v) =& \delta(y-x)\delta(v'-v)\,,
  \end{split}
  \label{fpe-langevin}
\end{align}
where $t\in [0,+\infty)$, $x,y,v,v'\in \mathbb{R}^d$, and $\mathrm{div}_y$,
$\mathrm{div}_{v'}$, $\Delta_{v'}$ denote the divergence with respect
to $y$, the divergence with respect to $v'$, the Laplacian with respect to $v'$, respectively. Under certain
conditions on $V$, the process is ergodic and the transition density $p^{\mathrm{Lan}}$ converges to the invariant density
$p^{\mathrm{Lan}}_{\infty}(x,v) = Z^{-1}
(\frac{\beta}{2\pi})^{\frac{d}{2}}\mathrm{e}^{-\beta
\big(V(x)+\frac{|v|^2}{2}\big)}$ as $t\rightarrow +\infty$, where $Z=\int_{\mathbb{R}^d} \mathrm{e}^{-\beta V(x)}\,\dd{x}$ is a
normalizing constant~\cite{villani2009hypocoercivity,eberle2019,time-inhomogeneous-2020,Iacobucci2019}.

Suppose that we want to construct a Markov chain for the data $(X_n)_{0 \le n
\le N}$, where $X_{n}=x(n\tau)$ is the position of \eqref{sde-langevin} at time $t=n\tau$ and $\tau>0$ is the lag-time.
Let $p(x,y)$ be the transition density of the Markov chain estimated from the data when the length $N\rightarrow +\infty$.
Given $x, y\in \mathbb{R}^d$, let us denote by $D_1, D_2$ small neighborhoods of $x$ and $y$, respectively.
 We can derive 
\begin{align*}
  & \int_{D_2} p(x,\bar{y})\,\dd{\bar{y}} \\
  = & \lim_{|D_1|\rightarrow
  0}\frac{1}{|D_1|}\int_{D_2}\int_{D_1} p(\bar{x},\bar{y}) \,\dd{\bar{x}}\, \dd{\bar{y}}\\
  = & \lim_{|D_1|\rightarrow 0} \lim_{N\rightarrow +\infty}\frac{\frac{1}{N}\sum_{n=0}^{N-1}\mathbbm{1}_{D_1}(X_n)\mathbbm{1}_{D_2}(X_{n+1})}{\frac{1}{N}\sum_{n=0}^{N-1}\mathbbm{1}_{D_1}(X_n)}\\
  =&\big(\frac{\beta}{2\pi}\big)^{\frac{d}{2}}\lim_{|D_1|\rightarrow 0} \frac{\int_{D_2}\int_{D_1} \int_{\mathbb{R}^d}\int_{\mathbb{R}^d}
  p^{\mathrm{Lan}}(\tau, \bar{y}, v'|\bar{x},v)\,\mathrm{e}^{-\beta
  (V(\bar{x})+\frac{|v|^2}{2})}\,\dd{v}\,\dd{v'}\,\dd{\bar{x}}\,\dd{\bar{y}}}{\int_{D_1}
  \mathrm{e}^{-\beta V(\bar{x})}\,\dd{\bar{x}}}\\
  =& \big(\frac{\beta}{2\pi}\big)^{\frac{d}{2}}\int_{D_2}
  \Big[\int_{\mathbb{R}^d}\int_{\mathbb{R}^d} p^{\mathrm{Lan}}(\tau, \bar{y},
  v'|x,v)\,\mathrm{e}^{-\frac{\beta|v|^2}{2}}\,\dd{v}\,\dd{v'}\Big]\,\dd{\bar{y}}\,,
\end{align*}
where the first and the last equalities follow from the continuity of densities $p$ and $p^{\mathrm{Lan}}$, respectively, 
the second equality follows from the definition of $p$ (i.e.\ the way in which $p$ is estimated from data), the third equality follows from 
Birkhoff's ergodic theorem (see~\cite[Theorem~4.4 in Chapter 1]{Krengel+1985}) and the ergodicity of the process. Since the above derivation is true for any neighborhood $D_2$ of $y$, we conclude that 
\begin{align}
  p(x,y) = \big(\frac{\beta}{2\pi}\big)^{\frac{d}{2}}
  \int_{\mathbb{R}^d}\int_{\mathbb{R}^d} p^{\mathrm{Lan}}(\tau, y, v'|x,v)\,\mathrm{e}^{-\frac{\beta|v|^2}{2}}\,\dd{v}\,\dd{v'}\,.
  \label{trans-langevin-p}
\end{align}

Next, we show that $p(x,y)$ satisfies the detailed balance condition with respect to the density $\frac{1}{Z} \mathrm{e}^{-\beta V}$:
\begin{equation}
  p(x,y)\,\mathrm{e}^{-\beta V(x)} = p(y,x)\,\mathrm{e}^{-\beta V(y)}\,, \quad \forall~x,y \in \mathbb{R}^d\,.
  \label{detailed-balance-trans-langevin}
\end{equation}
For this purpose, notice that the Langevin dynamics is reversible up to the
momentum (velocity) flip. In fact, we have, for $t > 0$,
\begin{equation}
  p^{\mathrm{Lan}}(t,y,v'|x,v)\,p^{\mathrm{Lan}}_{\infty}(x,v) =
  p^{\mathrm{Lan}}(t,x,-v|y,-v')\, p^{\mathrm{Lan}}_{\infty}(y,-v') \,, \quad \forall x,y,v,v'\in \mathbb{R}^d,   
  \label{detailed-balance-momentum-flip}
\end{equation}
which can be proven using \eqref{fpe-langevin} and the Kolmogorov backward
equation satisfied by $p^{\mathrm{Lan}}$ when considered as a function of $(t,x,v)$.
Applying \eqref{trans-langevin-p} and \eqref{detailed-balance-momentum-flip}, we can derive 
\begin{align*}
   p(x,y)\,\frac{1}{Z}\mathrm{e}^{-\beta V(x)} =&\frac{1}{Z}
  \big(\frac{\beta}{2\pi}\big)^{\frac{d}{2}}
  \int_{\mathbb{R}^d}\int_{\mathbb{R}^d} p^{\mathrm{Lan}}(\tau, y,
  v'|x,v)\,\mathrm{e}^{-\beta \big(V(x)+\frac{|v|^2}{2}\big)}\,\dd{v}\,\dd{v'}
  \\
  =& \int_{\mathbb{R}^d}\int_{\mathbb{R}^d} p^{\mathrm{Lan}}(\tau, y, v'|x,v)\, p^{\mathrm{Lan}}_{\infty}(x,v) \,\dd{v}\,\dd{v'} \\
  =& \int_{\mathbb{R}^d}\int_{\mathbb{R}^d} p^{\mathrm{Lan}}(\tau, x, -v|y,-v')\, p^{\mathrm{Lan}}_{\infty}(y,-v') \,\dd{v}\,\dd{v'} \\
  =& \frac{1}{Z}\big(\frac{\beta}{2\pi}\big)^{\frac{d}{2}}
  \int_{\mathbb{R}^d}\int_{\mathbb{R}^d} p^{\mathrm{Lan}}(\tau, x,
  -v|y,-v')\,\mathrm{e}^{-\beta
  \big(V(y)+\frac{|v'|^2}{2}\big)}\,\dd{v}\,\dd{v'}\\
  =&p(y,x)\,\frac{1}{Z} \mathrm{e}^{-\beta V(y)} \,,
\end{align*}
which gives \eqref{detailed-balance-trans-langevin}.

\section{Proofs in Section~\ref{sec-tran-operator}}
\label{app-sec-proofs}

In this section, we present the proofs of Proposition~\ref{prop-spectrum} and Theorem~\ref{thm-variational-form-transfer} in Section~\ref{subsec-spectrum}, as well as the proof of Proposition~\ref{prop-kab}
in Section~\ref{subsec-tpt}.

\begin{proof}[Proof of Proposition~\ref{prop-spectrum}]
  To prove the first claim, let us apply~\cite[Theorem VI.23]{reed1981functional} and show that~$\mathcal{T}$ is a Hilbert-Schmidt operator if \eqref{condition-hilbert-schmidt} holds.
  Obviously, by the definition \eqref{trans-operator} we have 
  \begin{equation*}
  \mathcal{T}f(x) = \int_{\mathbb{R}^d} p(x,y) f(y)\, \dd{y} = \int_{\mathbb{R}^d} K(x,y)  f(y)\,\mu(\dd{y})\,,
  \end{equation*}
    for $f\in \mathcal{H}$, where $K(x,y)=\frac{p(x,y)}{\pi(y)}$.
  And, using \eqref{condition-hilbert-schmidt} and the detailed balance condition \eqref{reversibility}, we have 
  \begin{equation}
    \begin{aligned}
      &\int_{\mathbb{R}^d} \int_{\mathbb{R}^d} |K(x,y)|^2 \,\mu(\dd{x})\,\mu(\dd{y})\\
      =& \int_{\mathbb{R}^d} \int_{\mathbb{R}^d}
      \left|\frac{p(x,y)}{\pi(y)}\right|^2 \pi(x)\pi(y) \,\dd{x}\,\dd{y}
      = \int_{\mathbb{R}^d} \int_{\mathbb{R}^d} p(x,y)p(y,x) \,\dd{x}\,\dd{y} < +\infty\,.
    \end{aligned}
  \end{equation}
  Therefore, the conditions of \cite[Theorem VI.23]{reed1981functional} hold and it follows that $\mathcal{T}$ is a Hilbert-Schmidt operator.
  This in turn implies that $\mathcal{T}$ is compact and any nonzero element in its spectrum $\sigma(\mathcal{T})$ is an eigenvalue (see~\cite[Theorem VI.22]{reed1981functional} and~\cite[Theorem VI.15]{reed1981functional}).

Next, we prove the second claim. Assume without loss of generality that the eigenfunction $\varphi$ corresponding to the eigenvalue $\lambda$ is normalized such that $\langle \varphi, \varphi\rangle_\mu=1$. Multiplying both sides of \eqref{eigen-problem} by $\varphi$ and integrating with respect to $\mu$, we get $\lambda = \langle \mathcal{T}\varphi, \varphi\rangle_\mu$.
  Concerning the upper bound in the second claim, using \eqref{energy}--\eqref{e-f-g-adjoint} we can derive that $0 \le \mathcal{E}(\varphi) =\langle (I-\mathcal{T})\varphi,\varphi\rangle_\mu = 1-\lambda$, which implies $\lambda \le 1$.
  If $\lambda=1$, we must have $\mathcal{E}(\varphi)=0$ and therefore \eqref{energy} implies that $\varphi$ is constant, since both $p$ and $\pi$ are positive.
  For the lower bound, similar to the proof of \eqref{e-f-g-adjoint} in Lemma~\ref{lemma-energy-and-msv}, we have (since $\mathcal{T}$ is self-adjoint)
  \begin{equation*}
    0 \le \frac{1}{2} \int_{\mathbb{R}^d}\left(\int_{\mathbb{R}^d}  (\varphi(y) +
    \varphi(x))^2 p(x,y)\dd{y}\right)\,\mu(\dd{x}) = \langle (I+\mathcal{T})\varphi, \varphi\rangle_\mu = 1+\lambda\,,
  \end{equation*}
  which implies $\lambda > -1$, since the equality is not attainable unless $\varphi\equiv 0$.
\end{proof}

\begin{proof}[Proof of Theorem~\ref{thm-variational-form-transfer}]
  Let $f_1, f_2, \dots, f_m\in \mathcal{H}$ be $m$ functions such that \eqref{f-constraints} holds.
  Define the diagonal matrix $A=\mbox{diag}\{\omega_1,\omega_2, \dots, \omega_m\}$
  and the $m\times m$ symmetric matrix $B=(B_{ij})_{1\le i,j\le m}$, whose
  entries are $B_{ij} = \mathcal{E}(f_i, f_j)$ (see \eqref{dirichlet-form}).
  Using the fact that $A$ is diagonal and $B_{ii} = \mathcal{E}(f_i,f_i) = \mathcal{E}(f_i)$ for $1 \le i \le m$ (see
  \eqref{energy}), we have 
  \begin{equation}
 \sum_{i=1}^m \omega_i \mathcal{E}(f_i) = \mbox{\textnormal{tr}} (AB) \,.
    \label{trace-formula}
  \end{equation}
  Also, we have 
  \begin{equation}
    c^\top B c = \sum_{1\le i,j\le m} c_i B_{ij} c_j 
= \sum_{1\le i,j\le m} c_i \mathcal{E}(f_i, f_j) c_j
     = \mathcal{E}\Big(\sum_{i=1}^m c_i f_i\Big) \ge 0\,,\quad \forall c \in \mathbb{R}^m\,,
    \label{cfc}
  \end{equation}
  and it equals zero only if $c=0$ (because \eqref{f-constraints} implies that
  $f_1,f_2,\cdots, f_m$ are linearly independent and $\mathcal{E}$ is defined in \eqref{energy} with positive $p$).
   This implies that $B$ is a positive definite symmetric matrix, whose
   eigenvalues are denoted by $0 < \widetilde{\lambda}_1 \le \widetilde{\lambda}_2 \le \dots \le \widetilde{\lambda}_m$. 
  Applying Ruhe's trace inequality~\cite[H.1.h, Section H, Chapter
  9]{marshall-inequalities-book} and using the assumption that $\omega_1 \ge \dots \ge \omega_m >0$,  we obtain from \eqref{trace-formula} that
  \begin{equation}
    \sum_{i=1}^m\omega_i \mathcal{E}(f_i) = \mbox{\textnormal{tr}} (AB) \ge \sum_{i=1}^m \omega_i \widetilde{\lambda}_i \,.
    \label{bound-1}
  \end{equation}
 Let us show that $\widetilde{\lambda}_j \ge 1-\lambda_j$ for $j\in
  \{1,2,\dots, m\}$. For this purpose, applying the min-max theorem for
  symmetric matrices we have 
  \begin{equation}
    \widetilde{\lambda}_j = \min_{S_j} \max_{c\in S_j, |c|=1}
    c^\top B c= \min_{S_j} \max_{c\in S_j, |c|=1} \mathcal{E}\big(\sum_{i=1}^m c_i f_i\big)\,,
    \label{lambda-k-tilde}
  \end{equation}
  where $S_j$ goes over all $j$-dimensional linear subspaces of $\mathbb{R}^m$ and
  the second equality follows from \eqref{cfc}. 
Since $(f_i)_{1\le i\le m}\subset \mathcal{H}$ satisfy \eqref{f-constraints},
  the functions $\mathbf{1}, f_1,\dots, f_m$ are pairwise orthonormal, where $\mathbf{1}$ denotes the constant function that equals one.
  Hence, each $j$-dimensional linear subspace $S_j\subset \mathbb{R}^m$ defines a
  $(j+1)$-dimensional linear subspace of $\mathcal{H}$ by $H_j=\big\{c_0 \mathbf{1} + \sum_{i=1}^m c_if_i\,|\, c_0\in \mathbb{R}, c\in S_j\big\}$, such that  $H_j\subset \mbox{span}\{\mathbf{1}, f_1,f_2, \dots, f_m\}$. 
  For $f = c_0 \mathbf{1} + \sum_{i=1}^m c_if_i$, the pairwise orthonormality
  and the fact $\mathcal{E}(f)=\mathcal{E}(f-c_0)$ (see \eqref{energy}) imply that $\|f\|_\mu=c_0^2 + |c|^2$
  and $\mathcal{E}(f) = \mathcal{E}\big(\sum_{i=1}^m c_i f_i\big)$. 
  Hence, we have 
  \begin{align*}
    \max_{f \in H_j, \|f\|_\mu=1} \mathcal{E}(f) =& \max_{\substack{c_0 \in
    \mathbb{R}, c\in S_j, c_0^2 + |c|^2 = 1}} \mathcal{E}(\sum_{i=1}^m c_i f_i)\\
    =& \max_{\substack{c\in S_j, |c| \le 1}} \mathcal{E}(\sum_{i=1}^m c_i f_i)
    = \max_{\substack{c\in S_j, |c| =1}} \mathcal{E}(\sum_{i=1}^m c_i f_i)\,,
  \end{align*}
  where the last equality follows from the fact that $\mathcal{E}$ is a (nonnegative) quadratic form.
 Therefore, from \eqref{lambda-k-tilde} we find that 
  \begin{align}
    \begin{split}
      \widetilde{\lambda}_j =& \min_{S_j} \max_{c\in S_j, |c|=1}
      \mathcal{E}\big(\sum_{i=1}^m c_i f_i\big)\\
      =& \min_{H_j} \max_{f \in H_j, \|f\|_\mu=1} \mathcal{E}(f) \\
      =& \min_{H_j} \max_{f \in H_j, \|f\|_\mu=1} \langle (\mathcal{I}-\mathcal{T})f, f\rangle_\mu \\
      \ge&  1 - \lambda_j\,,
  \end{split}
    \label{cmp-eigen-of-f-to-real}
  \end{align}
where the last inequality follows from the min-max Theorem~\cite[Theorem
  4.14]{teschl2009mathematical} and the fact that $1-\lambda_j$ is the $(j+1)$th smallest eigenvalue of $\mathcal{I}-\mathcal{T}$. 

  Combining \eqref{cmp-eigen-of-f-to-real} and \eqref{bound-1}, gives 
    $\sum_{i=1}^m\omega_i \mathcal{E}(f_i) \ge \sum_{i=1}^m \omega_i
    (1-\lambda_i)$.
    Since the eigenfunctions $(\varphi_i)_{1\le i\le m}$ satisfy \eqref{f-constraints}
    and $\sum_{i=1}^m\omega_i \mathcal{E}(\varphi_i) =
    \sum_{i=1}^m \omega_i (1-\lambda_i)$, we conclude that
    the identity \eqref{variational-transfer-all-first-m} holds and the minimum is achieved when $f_i=\varphi_i$ for $i\in \{1,2,\dots, m\}$.
\end{proof}

\begin{proof}[Proof of Proposition~\ref{prop-kab}]

  The first identity follows immediately from the definition \eqref{k-ab-def} and the observation that between two consecutive reactive segments from $A$ to $B$ there is exactly one reactive segment from $B$ to $A$.

  Concerning the second claim, we notice that the number of reactive segments within the first $N$ steps can be written as
  \begin{equation*}
    M_{N}^R = \sum_{m=0}^{N-1} \mathbbm{1}_A(X_m) \mathbbm{1}_B(X_{m+j_m})\,,
  \end{equation*}
  where $\mathbbm{1}_A$ (resp.\ $\mathbbm{1}_B$) denotes the indicator
  function of subset $A$ (resp.\ $B$), and $j_m:=\inf\{i \ge 1;  X_{m+i} \in A\cup B\}$.
  Therefore, by the definition \eqref{k-ab-def}, we have 
  \begin{equation*}
    k_{AB} = \lim_{N\rightarrow +\infty} \frac{1}{N} \sum_{m=0}^{N-1}
    \mathbbm{1}_A(X_m) \mathbbm{1}_B(X_{m+j_m}) =\int_{A}
    \left(\int_{\mathbb{R}^d} p(x,y) q(y)\, \dd{y}\right) \mu(\dd{x})\,,
  \end{equation*}
  where the second equality follows from the definition of the committor $q$ in \eqref{committor-eqn}, 
the ergodicity of the process $X_n$ and Birkhoff's ergodic theorem (see~\cite[Theorem~4.4 in Chapter 1]{Krengel+1985}). Similarly, by considering reactive segments from $B$ to $A$ and applying the identity in the first claim, we obtain the second equality in this claim, since 
  \begin{equation}
    \begin{aligned}
    k_{AB}=k_{BA}=& \lim_{N\rightarrow +\infty} \frac{1}{N} \sum_{m=0}^{N-1}
    \mathbbm{1}_B(X_m) \mathbbm{1}_A(X_{m+j_m}) \\
      =& \int_{B} \left(\int_{\mathbb{R}^d} p(x,y) (1-q(y))\,\dd{y}\right)\mu(\dd{x})\,.
    \end{aligned}
    \label{proof-prop-kab-1}
  \end{equation}
  Concerning the third claim, from \eqref{energy} we can compute
  \begin{align*}
    \mathcal{E}(q) =& \frac{1}{2} \int_{\mathbb{R}^d} \left(\int_{\mathbb{R}^d} |q(y) - q(x)|^2 p(x,y) \dd{y}\,\right)\mu(\dd{x})\\
    =& \langle q, (\mathcal{I}-\mathcal{T})q\rangle_\mu\\
      =& \int_{A} q(x) [(\mathcal{I}-\mathcal{T})q](x) \mu(\dd{x}) +
      \int_{(A\cup B)^c} q(x) [(\mathcal{I}-\mathcal{T})q](x) \mu(\dd{x})
      \\
      &+ \int_{B} q(x) [(\mathcal{I}-\mathcal{T})q](x) \mu(\dd{x}) \\
      =& \int_{B} [(\mathcal{I}-\mathcal{T})q](x)  \mu(\dd{x}) \\
      =& \int_{B} \left(\int_{\mathbb{R}^d} p(x,y) (1-q(y))\,\dd{y}\right) \mu(\dd{x})\\
      =& k_{AB}\,,
  \end{align*}
  where the second equality follows from \eqref{e-f-g-adjoint} in Lemma~\ref{lemma-energy-and-msv} and the fact that $\langle f,
  (\mathcal{I}-\mathcal{T}^{rev})f\rangle_\mu = \langle f,
  (\mathcal{I}-\mathcal{T})f\rangle_\mu$ for any $f$, the fourth equality
  follows by using the equation \eqref{committor-equation} of the committor
  $q$, the definition of $\mathcal{T}$ and the boundary conditions of $q$ on $A, B$, the
  fifth equality follows from the fact that $\int_{\mathbb{R}^d}
  p(x,y)\dd{y}=1$ and $q|_B=1$, and the last equality follows from \eqref{proof-prop-kab-1}.
\end{proof}

\bibliographystyle{plain}
\bibliography{references}

\begin{thebibliography}{10}

\bibitem{ambrosio2005gradient}
L.~Ambrosio, N.~Gigli, and G.~Savar{\'e}.
\newblock {\em Gradient Flows: In Metric Spaces and in the Space of Probability
  Measures}.
\newblock Lectures in Mathematics. Birkh{\"a}user, 2005.

\bibitem{Eberle-note}
Eberle Andreas.
\newblock Markov processes.
\newblock Lecture Notes, Institute for Applied Mathematics, University of Bonn,
  2023.

\bibitem{non-markovian-modeling-pfolding-netz}
Cihan Ayaz, Lucas Tepper, Florian~N. Brünig, Julian Kappler, Jan~O. Daldrop,
  and Roland~R. Netz.
\newblock Non-{Markovian} modeling of protein folding.
\newblock {\em Proc. Natl. Acad. Sci. USA}, 118(31):e2023856118, 2021.

\bibitem{Beltran2010}
J.~Beltr{\'a}n and C.~Landim.
\newblock Tunneling and metastability of continuous time {Markov} chains.
\newblock {\em J. Stat. Phys.}, 140(6):1065--1114, 2010.

\bibitem{Beltran2015}
J.~Beltr{\'a}n and C.~Landim.
\newblock A martingale approach to metastability.
\newblock {\em Probab. Theory Relat. Fields}, 161(1):267--307, 2015.

\bibitem{Bittracher2018}
Andreas Bittracher, P{\'e}ter Koltai, Stefan Klus, Ralf Banisch, Michael
  Dellnitz, and Christof Sch{\"u}tte.
\newblock Transition manifolds of complex metastable systems.
\newblock {\em J. Nonlinear Sci.}, 28(2):471--512, 2018.

\bibitem{optimal-rc-bittracher}
Andreas Bittracher, Mattes Mollenhauer, P\'{e}ter Koltai, and Christof
  Sch\"{u}tte.
\newblock Optimal reaction coordinates: Variational characterization and sparse
  computation.
\newblock {\em Multiscale Model. Simul.}, 21(2):449--488, 2023.

\bibitem{bogachev2022fokker}
V.I. Bogachev, N.V. Krylov, M.~R{\"o}ckner, and S.V. Shaposhnikov.
\newblock {\em Fokker--Planck--Kolmogorov Equations}.
\newblock Mathematical Surveys and Monographs. American Mathematical Society,
  2022.

\bibitem{Bovier2004}
Anton Bovier, Michael Eckhoff, V{\'e}ronique Gayrard, and Markus Klein.
\newblock Metastability in reversible diffusion processes {I}: Sharp
  asymptotics for capacities and exit times.
\newblock {\em J. Eur. Math. Soc.}, 6(4):399--424, 2004.

\bibitem{Bovier2005}
Anton Bovier, V{\'e}ronique Gayrard, and Markus Klein.
\newblock Metastability in reversible diffusion processes {II}: precise
  asymptotics for small eigenvalues.
\newblock {\em J. Eur. Math. Soc.}, 7(1):69--99, 2005.

\bibitem{Koopmanism}
Marko Budi\v{s}i{\'c}, Ryan Mohr, and Igor Mezi{\'c}.
\newblock Applied {Koopmanism}.
\newblock {\em Chaos}, 22(4):047510, 2012.

\bibitem{state-free-vampnets}
Wei Chen, Hythem Sidky, and Andrew~L. Ferguson.
\newblock Nonlinear discovery of slow molecular modes using state-free
  reversible {VAMPnets}.
\newblock {\em J. Chem. Phys.}, 150(21):214114, 2019.

\bibitem{Cover2006}
Thomas~M. Cover and Joy~A. Thomas.
\newblock {\em Elements of Information Theory}.
\newblock John Wiley \& Sons, Ltd, 2nd edition, 2006.

\bibitem{DLPSS18}
Manh~Hong Duong, Agnes Lamacz, Mark~A Peletier, Andr{\'e} Schlichting, and
  Upanshu Sharma.
\newblock {Quantification of coarse-graining error in Langevin and overdamped
  Langevin dynamics}.
\newblock {\em Nonlinearity}, 31(10):4517, 2018.

\bibitem{towards_tpt2006}
Weinan E and Eric Vanden-Eijnden.
\newblock Towards a theory of transition paths.
\newblock {\em J. Stat. Phys.}, 123(3):503--523, 2006.

\bibitem{tpt2010}
Weinan E and Eric Vanden-Eijnden.
\newblock Transition-path theory and path-finding algorithms for the study of
  rare events.
\newblock {\em Annu. Rev. Phys. Chem.}, 61(1):391--420, 2010.

\bibitem{eberle2019}
Andreas Eberle, Arnaud Guillin, and Raphael Zimmer.
\newblock Couplings and quantitative contraction rates for {Langevin} dynamics.
\newblock {\em Ann. Probab.}, 47(4):1982--2010, 2019.

\bibitem{evansmeasure}
Lawrence~C. Evans and Ronald~F. Gariepy.
\newblock {\em Measure Theory and Fine Properties of Functions, Revised Edition
  (1st ed.)}.
\newblock Chapman and Hall/CRC, 2015.

\bibitem{FukushimaOshimaTakeda2010}
Masatoshi Fukushima, Yoichi Oshima, and Masayoshi Takeda.
\newblock {\em Dirichlet Forms and Symmetric Markov Processes}.
\newblock De Gruyter, Berlin, New York, 2010.

\bibitem{Helfmann_thesis_2022}
Luzie Helfmann.
\newblock {\em Non-stationary Transition Path Theory with applications to
  tipping and agent-based models}.
\newblock Dissertation, Freie Universit\"at Berlin, 2022.

\bibitem{Helfmann2020}
Luzie Helfmann, Enric Ribera Borrell, Christof Sch{\"u}tte, and P{\'e}ter
  Koltai.
\newblock Extending transition path theory: Periodically driven and finite-time
  dynamics.
\newblock {\em J. Nonlinear Sci.}, 30(6):3321--3366, 2020.

\bibitem{enhanced-sampling-for-md-review}
J{\'e}r{\^o}me H{\'e}nin, Tony Leli{\`e}vre, Michael~R. Shirts, Omar Valsson,
  and Lucie Delemotte.
\newblock Enhanced sampling methods for molecular dynamics simulations [article
  v1.0].
\newblock {\em LiveCoMS}, 4(1):1583, 2022.

\bibitem{sharma-cg-markovchain}
Bastian Hilder and Upanshu Sharma.
\newblock Quantitative coarse-graining of {Markov} chains.
\newblock {\em SIAM J. Math. Anal.}, 56:913--954, 2024.

\bibitem{husic-pande-msm}
Brooke~E. Husic and Vijay~S. Pande.
\newblock Markov state models: From an art to a science.
\newblock {\em J. Amer. Chem. Soc.}, 140(7):2386--2396, 2018.

\bibitem{Iacobucci2019}
Alessandra Iacobucci, Stefano Olla, and Gabriel Stoltz.
\newblock Convergence rates for nonequilibrium {Langevin} dynamics.
\newblock {\em Ann. Math. Qu{\'e}.}, 43(1):73--98, 2019.

\bibitem{kloeden2011numerical}
Peter~E. Kloeden and Eckhard Platen.
\newblock {\em Numerical Solution of Stochastic Differential Equations}.
\newblock Stochastic Modelling and Applied Probability. Springer Berlin
  Heidelberg, 2011.

\bibitem{klus-koopman}
Stefan Klus, P{\'e}ter Koltai, and Christof Sch{\"u}tte.
\newblock On the numerical approximation of the {Perron-Frobenius and Koopman}
  operator.
\newblock {\em J. Comput. Dyn.}, 3(1):51--79, 2016.

\bibitem{EDMD-Klus}
Stefan Klus, Feliks N{\"u}ske, P{\'e}ter Koltai, Hao Wu, Ioannis Kevrekidis,
  Christof Sch{\"u}tte, and Frank No{\'e}.
\newblock Data-driven model reduction and transfer operator approximation.
\newblock {\em J. Nonlinear Sci.}, 28:985--1010, 2018.

\bibitem{KLUS2020132416}
Stefan Klus, Feliks N\"{u}ske, Sebastian Peitz, Jan-Hendrik Niemann, Cecilia
  Clementi, and Christof Sch{\"u}tte.
\newblock Data-driven approximation of the {Koopman} generator: Model
  reduction, system identification, and control.
\newblock {\em Phys. D: Nonlinear Phenom.}, 406:132416, 2020.

\bibitem{KobyzevPAMI2020}
Ivan Kobyzev, Simon~J.D. Prince, and Marcus~A. Brubaker.
\newblock Normalizing flows: An introduction and review of current methods.
\newblock {\em IEEE Trans. Pattern Anal. Mach. Intell.}, 43(11):3964--3979,
  2021.

\bibitem{Krengel+1985}
Ulrich Krengel.
\newblock {\em Ergodic Theorems}.
\newblock De Gruyter, Berlin, New York, 1985.

\bibitem{landim-resolvent-metastability}
Claudio Landim, Diego Marcondes, and Insuk Seo.
\newblock A resolvent approach to metastability.
\newblock {\em J. Eur. Math. Soc.}, 2023.
\newblock published online first.

\bibitem{effective_dynamics}
Fr{\'e}d{\'e}ric Legoll and Tony Leli{\`e}vre.
\newblock Effective dynamics using conditional expectations.
\newblock {\em Nonlinearity}, 23(9):2131--2163, 2010.

\bibitem{LEGOLL2017-pathwise}
Fr{\'e}d{\'e}ric Legoll, Tony Leli{\`e}vre, and Stefano Olla.
\newblock Pathwise estimates for an effective dynamics.
\newblock {\em Stoch. Process. Appl.}, 127(9):2841--2863, 2017.

\bibitem{LegollLelievreSharma18}
Fr{\'e}d{\'e}ric Legoll, Tony Leli{\`e}vre, and Upanshu Sharma.
\newblock Effective dynamics for non-reversible stochastic differential
  equations: a quantitative study.
\newblock {\em Nonlinearity}, 32(12):4779, 2019.

\bibitem{lelievre2023analyzing}
Tony Leli{\`e}vre, Thomas Pigeon, Gabriel Stoltz, and Wei Zhang.
\newblock Analyzing multimodal probability measures with autoencoders.
\newblock {\em J. Phys. Chem. B.}, (11):2607--2631, 2024.

\bibitem{lelievre2010free}
Tony Leli{\`e}vre, Gabriel Stoltz, and Mathias Rousset.
\newblock {\em Free Energy Computations: A Mathematical Perspective}.
\newblock Imperial College Press, 2010.

\bibitem{multiple-projection-2020}
Tony Leli{\`e}vre, Gabriel Stoltz, and Wei Zhang.
\newblock Multiple projection {MCMC} algorithms on submanifolds.
\newblock {\em IMA J. Numer. Anal.}, 43(2):737--788, 2023.

\bibitem{LelievreZhang18}
Tony Leli{\`e}vre and Wei Zhang.
\newblock Pathwise estimates for effective dynamics: the case of nonlinear
  vectorial reaction coordinates.
\newblock {\em Multiscale Model. Sim.}, 17(3):1019--1051, 2019.

\bibitem{vampnet}
Andreas Mardt, Luca Pasquali, Hao Wu, and Frank No{\'e}.
\newblock {VAMPnets} for deep learning of molecular kinetics.
\newblock {\em Nat. Commun.}, 9(5), 2018.

\bibitem{marshall-inequalities-book}
Albert~W. Marshall, Ingram Olkin, and Barry~C. Arnold.
\newblock {\em Inequalities: Theory of Majorization and its Applications},
  volume 143.
\newblock Springer, 2nd edition, 2011.

\bibitem{tpt_jump}
Philipp Metzner, Christof Sch{\"{u}}tte, and Eric Vanden-Eijnden.
\newblock Transition path theory for {Markov} jump processes.
\newblock {\em Multiscale Model. Simul.}, 7(3):1192--1219, 2009.

\bibitem{frank_feliks_mms2013}
Frank No{\'{e}} and Felix N{\"{u}}ske.
\newblock A variational approach to modeling slow processes in stochastic
  dynamical systems.
\newblock {\em Multiscale Model. Simul.}, 11(2):635--655, 2013.

\bibitem{perspective-noid-cg}
William~G. Noid.
\newblock Perspective: Coarse-grained models for biomolecular systems.
\newblock {\em J. Chem. Phys.}, 139(9):090901, 2013.

\bibitem{feliks_variational_jctc_2014}
Felix N{\"{u}}ske, Bettina Keller, Guillermo P{\'{e}}rez-Hern{\'{a}}ndez,
  Antonia~S.J.S. Mey, and Frank No{\'{e}}.
\newblock Variational approach to molecular kinetics.
\newblock {\em J. Chem. Theory Comput.}, 10(4):1739--1752, 2014.

\bibitem{oksendalSDE}
Bernt {\O}ksendal.
\newblock {\em Stochastic Differential Equations: An Introduction with
  Applications}.
\newblock Springer Berlin Heidelberg, 5th edition, 2003.

\bibitem{pavliotis2008multiscale}
G.A. Pavliotis and A.~Stuart.
\newblock {\em Multiscale Methods: Averaging and Homogenization}.
\newblock Texts in Applied Mathematics. Springer New York, 2008.

\bibitem{stoch_process_pavliotis}
Grigorios~A. Pavliotis.
\newblock {\em Stochastic Processes and Applications: Diffusion Processes, the
  Fokker-Planck and Langevin Equations}.
\newblock Springer New York, NY, 2014.

\bibitem{tica}
Guillermo P{\'e}rez-Hern{\'a}ndez, Fabian Paul, Toni Giorgino, Gianni
  De~Fabritiis, and Frank No{\'e}.
\newblock Identification of slow molecular order parameters for {Markov} model
  construction.
\newblock {\em J. Chem. Phys.}, 139(1):015102, 2013.

\bibitem{msm_generation}
Jan-Hendrik Prinz, Hao Wu, Marco Sarich, Bettina Keller, Martin Senne, Martin
  Held, John~D. Chodera, Christof. Sch{\"u}tte, and Frank No{\'e}.
\newblock Markov models of molecular kinetics: Generation and validation.
\newblock {\em J. Chem. Phys.}, 134(17):174105, 2011.

\bibitem{reed1981functional}
Michael~C. Reed and Barry Simon.
\newblock {\em Methods of Modern Mathematical Physics, I: Functional Analysis}.
\newblock Elsevier Science, 1981.

\bibitem{risken1989fpe}
Hannes Risken.
\newblock {\em {The Fokker-Planck Equation: Methods of Solution and
  Applications}}.
\newblock Springer Berlin Heidelberg, 2nd edition, 1989.

\bibitem{transfer_operator}
Christof. Sch{\"u}tte, Whihelm Huisinga, and Peter Deuflhard.
\newblock Transfer operator approach to conformational dynamics in biomolecular
  systems.
\newblock In B.~Fiedler, editor, {\em Ergodic Theory, Analysis, and Efficient
  Simulation of Dynamical Systems}, pages 191--223. Springer Berlin Heidelberg,
  2001.

\bibitem{Schutte_Klus_Hartmann_2023}
Christof Sch{\"u}tte, Stefan Klus, and Carsten Hartmann.
\newblock Overcoming the timescale barrier in molecular dynamics: Transfer
  operators, variational principles and machine learning.
\newblock {\em Acta Numer.}, 32:517--673, 2023.

\bibitem{kernel-tica}
Christian~R Schwantes and Vijay~S Pande.
\newblock Modeling molecular kinetics with t{ICA} and the kernel trick.
\newblock {\em J. Chem. Theory Comput.}, 11(2):600--608, 2015.

\bibitem{teschl2009mathematical}
Gerald Teschl.
\newblock {\em Mathematical Methods in Quantum Mechanics: With Applications to
  Schr{\"o}dinger Operators}.
\newblock Graduate studies in mathematics. American Mathematical Society, 2009.

\bibitem{tpt_eric2006}
Eric Vanden-Eijnden.
\newblock Transition path theory.
\newblock In M.~Ferrario, G.~Ciccotti, and K.~Binder, editors, {\em Computer
  Simulations in Condensed Matter Systems: From Materials to Chemical Biology
  Volume 1}, pages 453--493. Springer Berlin Heidelberg, Berlin, Heidelberg,
  2006.

\bibitem{villani2009hypocoercivity}
C{\'e}dric Villani.
\newblock {\em Hypocoercivity}.
\newblock American Mathematical Society, 2009.

\bibitem{ksz2017}
Max von Kleist, Christof Sch{\"u}tte, and Wei Zhang.
\newblock Statistical analysis of the first passage path ensemble of jump
  processes.
\newblock {\em J. Stat. Phys.}, 170(4):809--843, 2018.

\bibitem{edmd}
Matthew~O. Williams, Ioannis~G. Kevrekidis, and Clarence~W. Rowley.
\newblock A data–driven approximation of the {Koopman} operator: Extending
  dynamic mode decomposition.
\newblock {\em J. Nonlinear Sci.}, 25:1307--1346, 2015.

\bibitem{Wu2020}
Hao Wu and Frank No{\'e}.
\newblock Variational approach for learning {Markov} processes from time series
  data.
\newblock {\em J. Nonlinear Sci.}, 30(1):23--66, 2020.

\bibitem{hao-rc-flow}
Hao Wu and Frank No{\'e}.
\newblock {Reaction coordinate flows for model reduction of molecular
  kinetics}.
\newblock {\em J. Chem. Phys.}, 160(4):044109, 01 2024.

\bibitem{var-koopman-model}
Hao Wu, Feliks N{\"u}ske, Fabian Paul, Stefan Klus, P{\'e}ter Koltai, and Frank
  No{\'e}.
\newblock {Variational Koopman models: Slow collective variables and molecular
  kinetics from short off-equilibrium simulations}.
\newblock {\em J. Chem. Phys.}, 146(15):154104, 04 2017.

\bibitem{zhang2016asymptoticanalysismultiscalemarkov}
Wei Zhang.
\newblock Asymptotic analysis of multiscale {Markov} chain, 2016.

\bibitem{time-inhomogeneous-2020}
Wei Zhang.
\newblock Some new results on relative entropy production, time reversal, and
  optimal control of time-inhomogeneous diffusion processes.
\newblock {\em J. Math. Phys.}, 62(4):043302, 2021.

\bibitem{effective_dyn_2017}
Wei Zhang, Carsten Hartmann, and Christof Sch{\"u}tte.
\newblock Effective dynamics along given reaction coordinates{,} and reaction
  rate theory.
\newblock {\em Faraday Discuss.}, 195:365--394, 2016.

\bibitem{eigenpde-by-nn-ZHANG}
Wei Zhang, Tiejun Li, and Christof Sch\"{u}tte.
\newblock Solving eigenvalue {PDEs} of metastable diffusion processes using
  artificial neural networks.
\newblock {\em J. Comput. Phys.}, 465:111377, 2022.

\bibitem{sz-entropy-2017}
Wei Zhang and Christof Sch{\"u}tte.
\newblock Reliable approximation of long relaxation timescales in molecular
  dynamics.
\newblock {\em Entropy}, 19(7), 2017.

\bibitem{zhang2023understanding}
Wei Zhang and Christof Sch{\"u}tte.
\newblock Understanding recent deep-learning techniques for identifying
  collective variables of molecular dynamics.
\newblock {\em Proc. Appl. Math. Mech.}, 23(4):e202300189, 2023.

\end{thebibliography}

\end{document}